\newcounter{braid}
\newcounter{strands}
\DeclareMathAlphabet{\bsf}{OT1}{cmss}{bx}{n}
\def\cross{%
  \@ifnextchar^{\message{Got sup}\cross@sup}{\cross@sub}}
\def\cross@sup^#1_#2{\render@cross{#2}{#1}}
\def\cross@sub_#1{\@ifnextchar^{\cross@@sub{#1}}{\render@cross{#1}{1}}}
\def\cross@@sub#1^#2{\render@cross{#1}{#2}}
\def\render@cross#1#2{
  \def\strand{#1}
  \def\crossing{#2}
  \pgfmathsetmacro{\cross@y}{-\value{braid}*\braid@h}
  \pgfmathtruncatemacro{\nextstrand}{#1+1}
  \foreach \thread in {1,...,\value{strands}}
  {
    \pgfmathsetmacro{\strand@x}{\thread * \braid@w}
    \ifnum\thread=\strand
    \pgfmathsetmacro{\over@x}{\strand * \braid@w + .5*(1 - \crossing) * \braid@w}
    \pgfmathsetmacro{\under@x}{\strand * \braid@w + .5*(1 + \crossing) * \braid@w}
    \draw[braid] \pgfkeysvalueof{/tikz/braid start} +(\under@x pt,\cross@y pt) to[out=-90,in=90] +(\over@x pt,\cross@y pt -\braid@h);
    \draw[braid] \pgfkeysvalueof{/tikz/braid start} +(\over@x pt,\cross@y pt) to[out=-90,in=90] +(\under@x pt,\cross@y pt -\braid@h);
    \else
    \ifnum\thread=\nextstrand
    \else
     \draw[braid] \pgfkeysvalueof{/tikz/braid start} ++(\strand@x pt,\cross@y pt) -- ++(0,-\braid@h);
    \fi
   \fi
  }
  \stepcounter{braid}
}
\tikzset{braid/.style={double=\pgfkeysvalueof{/tikz/braid colour},double distance=1pt,line width=2pt,white}}
\newcommand{\braid}[2][]{%
  \begingroup
  \pgfkeys{/tikz/strands=2}
  \tikzset{#1}
  \pgfkeysgetvalue{/tikz/braid width}{\braid@w}
  \pgfkeysgetvalue{/tikz/braid height}{\braid@h}
  \setcounter{braid}{0}
  \let\sigma=\cross
  #2
  \endgroup
}
\newtheorem{theorem}{Theorem}
\newtheorem{proposition}[theorem]{Proposition}
\newtheorem{lemma}[theorem]{Lemma}
\newtheorem{corollary}[theorem]{Corollary}
\def\dash{\text{-}}
\def\Z{\mathbb{Z}}
\def\N{\mathbb{N}}
\def\Pi{\mathbb{P}^{\infty}}
\def\qed{\hfill$\square$\medskip}
\def\Zpk{\mathbb{Z}/p^{k}}
\def\Zpk1{\mathbb{Z}/p^{k-1}}
\newcommand{\rref}[1]{(\ref{#1})}
\newcommand{\beg}[2]{\begin{equation}\label{#1}#2\end{equation}}
\def\r{\rightarrow}
\def\sl2{\widetilde{SL_{2}(\Z)}}
\title[Kan's combinatorial spectra]{Kan's combinatorial spectra and their sheaves revisited}
\author{Ruian Chen, Igor Kriz and Ale\v{s} Pultr}
\thanks{The second author acknowledges support from
grant GA\,\v{C}R P201/12/G028 in the Czech Republic, the Simons Foundation, and Charles University in Prague.
The second and third author both acknowledge support
from  the project GA\,\v{C}R P202/12/G061 in the Czech Republic.}
\begin{document}

\maketitle

\begin{abstract}
We define a right Cartan-Eilenberg structure on the category of Kan's combinatorial spectra, and the category of
sheaves of such spectra, assuming some conditions. In both structures, we use the geometric concept of homotopy
equivalence as the strong equivalence. In the case of sheaves, we use local equivalence as the weak equivalence. 
This paper is the first step in a larger-scale program of investigating sheaves of spectra from a geometric viewpoint.
\end{abstract}

\section{Introduction}

\subsection{The stable homotopy category and sheaves}
The stable homotopy category (also known as the stable category, or derived category of spectra)
is a foundational setting for generalized homology and cohomology, 
and as such, is perhaps the most important concept of modern algebraic topology.
Yet, the category does not have a canonical construction, unlike, say, the category of
chain complexes, which plays an analogous role for ordinary
(co)homology. In contrast, different approaches to the stable category exist, each of
which has some advantages and some disadvantages. An extensive foundational
and calculational treatment of the stable category was given by Adams \cite{adams}. But Adams'
treatment does not give an underlying ``strict category", which is often
needed in constructions, just as actual chain complexes, and not
just the objects of their derived category, are needed in homological algebra. 
A strict category of spectra, very closely analogous to the category of topological spaces, 
is provided by {\em May spectra} \cite{lms}, which, notably, also works equivariantly
for compact Lie groups. The May category has a number of more recent
improvements, many of which are related to constructing a point set level commutative associative
smash product \cite{ekmm, hovey, mandell}. 

\vspace{3mm}
A completely different construction of the stable homotopy category can be given using a
concept of a {\em combinatorial spectrum} discovered much earlier by Kan \cite{kan},
which can be described as a ``naive stabilization" of a simplicial set. 
While very appealing aesthetically, this approach has not had nearly as much
follow-up as constructions based on topological
spaces. A part of the reason is that 
even defining a smash product of a Kan spectrum with a based simplicial set (which is necessary
in treating generalized homology of spaces) is difficult, due to the fact that the smash product
of based simplicial sets does not commute with suspension of combinatorial spectra.

\vspace{3mm}
Yet, combinatorial spectra have some advantages. 
Notably, K.S. Brown \cite{ksb} developed a fully functional theory of sheaves of combinatorial spectra which 
is a generalization of abelian sheaves, and can be used to define generalized sheaf cohomology. 
Brown's category of sheaves of combinatorial spectra was also used by Piacenza
\cite{piacenza} to treat {\em locally constant sheaves of spectra}, which is
an approach to {\em parametrized spectra}. A rigorous definition of the derived category of
parametrized spectra was a notoriously hard problem. Treatments based on May spectra were more recently given in
\cite{hu} and \cite{ms}, and those also work for compact Lie groups. The construction
\cite{piacenza} can be used to construct the derived category of parametrized spectra
as a full subcategory of the derived category of sheaves of combinatorial spectra.
On the other hand, a fully functional
category of sheaves of May spectra, beyond locally constant, has so far not been constructed. 
Perhaps the difficulty with the smash product of combinatorial spectra is heuristically related to the ease with which
they are sheafified: for example, left derived functors are less natural
in abelian sheaves also, since abelian sheaves do not have enough projectives.

\vspace{3mm}
In \cite{ksb1}, Brown and Gersten applied the results of \cite{ksb} to algebraic K-theory, which
was later used by Thomason \cite{thomason}. Most of the discussion of sheaf theory concepts for spectra since
that time used Thomason's approach (see e.g. the survey paper \cite{geisser} for examples).
Thomason noticed that given certain hypotheses on the site, 
one can mostly get by with presheaves, by using cosimplicial Godement resolutions, which can
be constructed on the level of presheaves (since they only use stalks). For Godement resolutions,
one only needs a category with directed colimits and products. Applying these techniques, Thomason 
\cite{thomason} used
the category of presheaves of ``$\Omega$-spectra" of Bousfield and Friedlander \cite{bf} (which do not have
arbitrary limits) to define his
version of generalized sheaf cohomology. The category \cite{bf} is not canonical, many variants 
give the same result.  
In general, however, any presheaf approach to sheaf theory is somewhat ``Ersatz": 
for example, it does not have full functoriality with respect to functors which cannot be computed on presheaves,
such as the direct image. Such examples, using the original Brown theory \cite{ksb,ksb1}, 
occurred in the work of Gillet on the Riemann-Roch theorem in K-theory \cite{gillet, gillet1}.

\vspace{3mm}
An important point is that the sheaves of combinatorial spectra defined by K.S.Brown \cite{ksb} remain the only 
category of sheaves of spectra with a rigorously defined stable homotopy category
to date, which is why we consider this setting in the 
present paper. It is {\em not possible} to even discuss sheaves of the
Bousfield-Friedlander 
$\Omega$-spectra \cite{bf} used by Thomason \cite{thomason} because the category of Bousfield-Friedlander
$\Omega$-spectra does not have limits: A Bousfield-Friedlander 
$\Omega$-spectrum is a sequence of based simplicial sets
$Z_n$ together with weak equivalences $Z_n\r \Omega Z_{n+1}$. It is easily seen that a limit (even an intersection)
does not preserve the equivalences. 

\vspace{3mm}
A claim of a construction
of a homotopy theory of sheaves of May spectra has been made by Block and Lazarev \cite{blaz}, and in fact, 
\cite{blaz} claim that their construction works also in other categories based on May spectra, for example
the symmetric monoidal category of $S$-modules \cite{ekmm}. These categories, of course, have limits, 
so one can discuss sheaves,
but the construction of {\em sheafification} (i.e. left adjoint to the forgetful functor from sheaves to
presheaves) is difficult, in fact using the non-elementary framework of Freyd and Kelly \cite{fkelly}. It is
not clear how this sheafification interacts with homotopy. It was claimed in \cite{blaz} without proof 
that it behaves well on 
a class of CW-sheaves, and to our knowledge, no proof has since appeared in the literature.

\vspace{3mm}
\subsection{Cartan-Eilenberg structures}
In the present paper, we revisit Kan's combinatorial spectra,
and their sheaves, in view of the new axiomatic approach to homotopy theory
given in \cite{guillen}, (see also \cite{rg}), the main concept of which is that of a {\em Cartan-Eilenberg structure}. 
The established approach to the foundations of homotopy theory uses the concept of a
{\em Quillen model structure} \cite{quillen}. A Quillen model structure gives a computable construction of the derived
category, and access to left and right derived functors for certain functors known as derivable functors 
(\cite{hirsch}, Section 8.4). 
Describing a Quillen model structure on a category has become the standard method of constructing a
derived category in homotopy theory. Yet, constructing Quillen structures can be often technical and non-canonical:
Different model structures may exist, which may, for example,
describe the same derived category, but may differ in derivable functors, 
so different model structures may actually be needed to making different functors derived.

\vspace{3mm}
The paper \cite{guillen} 
formalized a framework which preceded Quillen 
model structures.  
This framework was first used in 1949 by J.H.C.Whitehead \cite{w1,w2} to develop
the derived category of spaces, and was also
used to construct the stable homotopy category from May spectra \cite{lms}, although 
in both cases, model structures exist also. 
In 1956, Cartan and Eilenberg \cite{ce} used Whitehead's approach, and its dual, to construct
$Ext$ groups in abelian categories with enough projectives, resp. enough injectives. 
In \cite{guillen}, this construction 
was formalized in a general context. For a category $\mathscr{C}$ and a class of morphisms in $
\mathscr{D}\subseteq\mathscr{C}$, by the {\em derived category} (if one exists) we shall mean
the universal category (on the same class of objects) in which the morphisms in $\mathscr{D}$ become
isomorphisms.

\vspace{3mm}
\noindent
{\bf Definition} \cite{guillen}:
Suppose $\mathscr{C}$ is a category together with two classes of morphisms $\mathscr{S}\subseteq\mathscr{E}$ 
called {\em strong and weak homotopy equivalences}. For simplicity, we shall assume that 
both $\mathscr{S}$ and $\mathscr{E}$ contain all isomorphisms, and satisfy the 2/3 axiom (i.e. in a 
commuting triangle of 
morphisms, if two out of three morphisms are in the class, so is the third). 
Suppose further that the derived category of $\mathscr{C}$ with respect to strong homotopies exists. (We
call it the {\em strong homotopy category}.)

Suppose now there is a class $\mathscr{B}\subseteq Obj(\mathscr{C})$
such that for any $y\in \mathscr{B}$, any weak equivalence $f:x_1\r x_2$ induces a bijection on sets of morphisms 
into $y$ (resp. from $y$) in the strong homotopy category. (In that case, we say that $\mathscr{B}$ is {\em local}
(resp, {\em co-local}) with respect to weak equivalences in the strong homotopy category.)
Suppose further that for every $x\in Obj(\mathscr{C})$, there exists 
a weak equivalence $x \r x^\prime$ (resp. $x^\prime\r x$) with $x^\prime\in 
\mathscr{B}$. Then we call the category $\mathscr{C}$ together 
with the data just specified a {\em right (resp. left) Cartan-Eilenberg category}.

\vspace{3mm}
The authors of \cite{guillen} prove that for any right or left Cartan-Eilenberg category, a derived category with respect to
weak homotopy equivalences (called the {\em weak homotopy category})
exists, and is equivalent to the full subcategory of the strong
homotopy category on $\mathscr{B}$. 
In addition, a structure of a left or right Cartan-Eilenberg category is often 
technically easier to prove than a Quillen model structure.

\vspace{3mm}
Often, the Cartan-Eilenberg formalism can be applied in a situation where there is
an underlying ``point set category", and the morphisms of $\mathscr{C}$ are equivalence classes
with respect to a congruence relation of {\em ``naive" homotopy}. Also, in a left resp. right Cartan-Eilenberg structure,
derived functors can be computed from those functors which send strong homotopy equivalences to
isomorphisms. Thus, Cartan-Eilenberg structures can be used 
as a foundation of homotopy theory, without using Quillen model structures.

\vspace{3mm}
One thing to notice is that while a Quillen model structure is {\em absolute}, a Cartan-Eilenberg structure is
{\em relative} in the sense that it needs the strong homotopy category to be defined first before defining the weak
one by means of localization or co-localization. 
Left or right
Cartan-Eilenberg structures can be ``composed"
in the sense that the weak homotopy equivalences of one structure can be the strong homotopy equivalences of
another one, thus defining a ``composite" Cartan-Eilenberg structure. Similar localization and co-localization 
procedures are also possible within Quillen model structure, given some additional conditions \cite{hirsch}.
Also, there is of course always a ``trivial" left (and right) Cartan-Eilenberg structure, where strong and weak homotopy equivalences are the same.

\vspace{3mm}
We see from this discussion that Cartan-Eilenberg structures on a category with given weak homotopy equivalences 
can have quite a different significance based on how the strong homotopy equivalences are chosen. Generally
speaking, the smaller the class of strong homotopy equivalences is, the more powerful the machinery is (since
it makes, for example, more functors derivable).
It is remarked in \cite{rg}, Example 2.11, any Quillen model structure gives rise to both a left and right 
Cartan-Eilenberg structure. The left Cartan-Eilenberg structure, for example, 
is the full subcategory of fibrant objects where
the weak homotopy equivalences are equivalences. The strong homotopy equivalences can be chosen as 
left homotopy equivalences, but stronger (=smaller) choices may be possible. For example, in the case of spaces with the
weak Quillen model structure, all objects are fibrant, and left homotopy equivalences are just weak equivalences,  
so this choice gives nothing new, but we can also choose homotopy equivalences in the ``naive" sense as
strong homotopy equivalences, which is how the Quillen model structure is constructed.
Dually, the Quillen category of simplicial sets, where every object is cofibrant, 
is right Cartan-Eilenberg (\cite{guillen}) where the strong homotopy equivalences are, again, the ``naive"
homotopy equivalences which are used in defining the Quillen structure. 
Generally, our intuition is that right Cartan-Eilenberg categories are more suitable for 
sheafification, just as one can develop a good derived category of sheaves on an abelian category with enough
injectives. 

\vspace{3mm}
In the case of spectra, which are a middle ground between spaces and abelian categories,
May spectra (and also the S-modules of \cite{ekmm}) are left Cartan-Eilenberg with
respect to homotopy equivalences (while
the symmetric spectra of \cite{hovey} are neither left nor right Cartan-Eilenberg). The left Cartan-Eilenberg
property turns out to be very valuable in imitating algebraic structures on spectra \cite{hks}, but it
is not immediately suitable for a fully functorial sheaf theory. A right Cartan-Eilenberg structure is needed. 
In \cite{rg}, such a structure was found on presheaves of Thomason spectra where the 
strong homotopy equivalences there are section-wise weak equivalences of presheaves. 

\vspace{3mm}
This begs the following question:
{\em Is there a right 
Cartan-Eilenberg theory of spectra (Eckmann-Hilton dual to May spectra), and a right Cartan-Eilenberg theory
of their sheaves, where strong equivalences are naive homotopy equivalences (defined by actual homotopies)?}

\vspace{3mm}
In this paper, we answer these questions in the affirmative. However, it appears hopeless to 
use any variation of the construction \cite{bf} for this purpose. This is, roughly speaking, because in a stable Quillen model structure, 
an arbitrary prespectrum cannot be cofibrant -- one needs its structure maps to be cofibrations.
On the other hand, Kan's combinatorial spectra do work.
We prove that
Kan's combinatorial spectra are right Cartan-Eilenberg with respect to homotopy equivalence (Corollary \ref{crg}). 
We also
prove that over a sufficiently nice site, the category of sheaves of combinatorial spectra is
right Cartan-Eilenberg with respect to homotopy equivalences (Theorem \ref{stcegodement}). In fact, this does not seem to be in
the literature even for sheaves of simplicial sets, so we also prove that first (Theorem \ref{tcegodement}).

\vspace{3mm}
\subsection{Comparison with previous work} 
This paper draws heavily on the papers \cite{ksb,guillen,rg}, the first of which
first considered sheaves of Kan spectra and their cohomology, and the second two of which axiomatized
Cartan-Eilenberg structures, and gave many examples. As remarked above, the significance of a Cartan-Eilenberg 
structure depends on its class of strong homotopy equivalences. In this paper, we construct Cartan-Eilenberg structures
whose strong homotopy equivalences are ``naive" homotopy equivalences coming from a notion of homotopy involving
an interval-like object. Preserving such geometric homotopy is
a condition which can be readily verified on many functors, thus giving rise to a large class of derivable functors. 
Additionally, it is the most natural choice, as a basic desideratum of homotopy theory is for naive homotopy-preserving
functors to be derivable.
As far as we know, our right Cartan-Eilenberg structure on Kan's combinatorial 
spectra where strong homotopy equivalences are the naive homotopy equivalences
is new, and no such structure was previously known. 

We then use our right Cartan-Eilenberg structure on Kan's combinatorial spectra to obtain a 
right Cartan-Eilenberg structure on their sheaves where, again, strong homotopy equivalences are homotopy equivalences
with respect to a naive notion of homotopy, coming from an interval-like construction. Again, preserving such 
homotopy is a readily verifiable condition for many functors of interest. 
Our construction is new and no such
construction was previously known. Additionally, Kan spectra 
are the only currently known rigorous setting of homotopy theory of sheaves (rather than presheaves) of spectra,
which makes our result an important part of the foundations.
In \cite{rg}, a right Cartan-Eilenberg structure is given on categories of
sheaves where strong homotopy equivalences are section-wise equivalences. This result of \cite{rg} is used in the proof 
of our main result, which, however, is substantially stronger. While naive homotopy equivalences
are always
section-wise equivalences, the converse is certainly false: for example, over a point, our Cartan-Eilenberg
structure contains the result on Kan
combinatorial spectra, while local equivalences (used in the Cartan-Eilenberg structure of \cite{rg})
and equivalences on sections coincide.

\vspace{3mm}
The present paper is organized as follows: In Section \ref{scomb}, we review Kan's construction of
combinatorial spectra \cite{kan}, and develop some additional technical concepts. We also prove that they
are right Cartan-Eilenberg with respect to homotopy equivalence. In Section \ref{scr}, we
make some observations on cosimplicial realization which we need later. In Section \ref{ssh},
we discuss ``nice" sites and prove the right Cartan-Eilenberg property with respect to homotopy equivalence
for sheaves of simplicial sets and combinatorial spectra.

\section{Combinatorial spectra}\label{scomb}

Combinatorial spectra were introduced by D. Kan in \cite{kan}. Recall the simplicial category $\Delta$ whose
object set is $\N_0=\{0,1,2,\dots\}$ and $\Delta(m,n)$ is the set of maps 
\beg{escat}{\rho:\mathbf{m}=\{0,\dots,m\}\r \mathbf{n}=\{0,\dots,n\}}
preserving $\leq$. There is a self-functor
$$\Phi:\Delta\r\Delta$$
where $\Phi(\mathbf{m})=\mathbf{m+1}$ and for $\rho$ as in \rref{escat}, $\Phi(\rho)$ coincides with $\rho$ on
$\mathbf{m}$, and 
$$(\Phi(\rho))(m+1)=n+1.$$
The category $\Delta_{st}$ is the (strict) colimit in the category of small categories
of the diagram
\beg{edphi}{\diagram
\Delta\rto^\Phi & \Delta\rto^\Phi &\dots
\enddiagram}
Therefore, one can identify the category $\Delta_{st}$ with the category whose object set is $\Z$,
and morphisms are generated by ``faces'' $d_i:\mathbf{m}\r \mathbf{m+1}$ and ``degeneracies'' $s_i:\mathbf{m}
\r \mathbf{m-1}$ which
satisfy the usual simplicial relations. 

Denote by $\Phi^{\infty-n}$ the inclusion of the $n$'th term $\Delta$ of \rref{edphi} into the colimit
$\Delta_{st}$. Note that one can have $n\in\Z$. Also note that we can identify $\Delta_{st}(k,\ell)$ with 
the set of $\leq$-preserving maps $f:\N_0\r\N_0$ which are of the form
\beg{ephinalpha}{f=\Phi^{\infty-n}(\alpha)}
for some $\alpha\in \Delta(k+n,\ell+n)$ where \rref{ephinalpha} is defined as the extension of $\alpha$
given by 
$$\Phi^{\infty-n}(\alpha)(s)=\ell-k+s \;\text{for $s>k+n$}$$
(i.e., put in another way, which satisfy $f(s+1)=f(s)+1$ for $s$ large enough). From this point of view,
{\em faces} (resp. {\em degeneracies}) in the wider sense are morphisms in $\Delta_{st}$ which are injective
(resp. onto) as maps $\N_0\r\N_0$. These are, of course, precisely those morphisms which are compositions
of the maps $d_i$ (resp. $s_i$), $i\geq 0$.

\vspace{3mm}
One denotes by $Set_\bullet$ the category of based sets, whose objects are based sets (sets with a distinguished
base point $*$), and morphisms are mappings preserving $*$. Consider the category $\Delta^{Op}\dash Set_\bullet$
of based simplicial sets, which is the category of functors $\Delta^{Op}\r Set_\bullet$ and natural 
transformations. Then there is a functor
$$\Omega^k:\Delta^{Op}\dash Set_\bullet\r \Delta^{Op}\dash Set_\bullet$$
where for a simplicial set $T:\Delta^{Op}\r Set_\bullet$, 
$$(\Omega^k(T))(n)=\{x\in T(n+k)\mid d_{n+1}(x)=\dots= d_{n+k}(x)=*\}$$
and for $x\in (\Omega^k(T))(n)$, the operators $s_i$, $d_i$, $j\leq  n$ 
act on $x$ the same way as in $T$. We write $\Omega$ instead of $\Omega^1$. We have
$$\Omega^{k+\ell}=\Omega^k\Omega^\ell.$$
The functor $\Omega^k$ has a left adjoint denoted by $\Sigma^k$. 

We can describe the functor $\Sigma$
explicitly as follows, separating the roles of faces 
and degeneracies (a similar description also holds for $\Sigma^k$): Let $\Delta_0$ be the subcategory of 
$\Delta$ consisting of the same objects and the morphisms \rref{escat}
such that 
\beg{errho}{|\rho^{-1}(n)|\leq 1.}
(Note that the category $\Delta_0$ contains the image of $\Phi$, and morphisms in the image of $\Phi$ are 
precisely those which satisfy the inequality in \rref{errho}.)
Then we have a functor
$$\Sigma_0:\Delta^{Op}\dash Set_\bullet \r 
\Delta^{Op}_{0}\dash Set_\bullet$$
where $\Sigma_0Z(n)=Z(n-1)$ for $n\geq 1$ and $\Sigma_0(Z)(0)=*$, and on
$\Sigma_0Z$, morphisms of the form $\Phi(\rho)$ act the same way as $\rho$ on $Z$, and
other morphisms act by $*$.
If we denote the inclusion functor $\iota:\Delta^{Op}_{0}\r\Delta^{Op}$ and its left Kan extension
by $\iota_\sharp$, then
$$\Sigma = \iota_\sharp \Sigma_0.$$

D.Kan \cite{kan} proves the following

\begin{proposition}\label{p1}
The functor $\Sigma^k$ commutes with the simplicial realization functor $|?|$ up to canonical natural
isomorphism, where $\Sigma^k$ on
based spaces denotes the canonical suspension $?\wedge S^k$.
\end{proposition}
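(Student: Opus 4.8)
The plan is to reduce to $k=1$ and then produce a natural isomorphism identifying the realization of the combinatorial suspension with the topological reduced suspension $\Sigma|Z| = |Z|\wedge S^{1}$; the general $k$ follows by iteration. The reduction uses that $\Omega^{k+\ell}=\Omega^{k}\Omega^{\ell}$, so by uniqueness of adjoints $\Sigma^{k+\ell}\cong\Sigma^{k}\Sigma^{\ell}$, together with $S^{k}=S^{1}\wedge\cdots\wedge S^{1}$ ($k$ factors) on based spaces. Granting $|\Sigma Z|\cong|Z|\wedge S^{1}$ naturally, induction gives $|\Sigma^{k}Z|=|\Sigma\,\Sigma^{k-1}Z|\cong|\Sigma^{k-1}Z|\wedge S^{1}\cong(|Z|\wedge S^{k-1})\wedge S^{1}=|Z|\wedge S^{k}$, with naturality preserved at each stage.

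Every functor here is a left adjoint, hence cocontinuous: $|?|$ is left adjoint to the singular functor $\mathrm{Sing}$, $\Sigma$ is left adjoint to $\Omega$, and $-\wedge S^{1}$ is left adjoint to the based mapping space out of $S^{1}$. The cocontinuity of $|?|$ is exactly what lets me commute it past the left Kan extension $\iota_{\sharp}=\mathrm{Lan}_{\iota}$. Indeed, since $|?|$ is the left Kan extension of the cosimplicial space $n\mapsto\Delta^{n}_{top}$ along the Yoneda embedding, the composite $|\iota_{\sharp}(-)|$ is the left Kan extension of the restricted cosimplicial space $\Delta_{0}\to\mathbf{Top}$; concretely, using $\Sigma=\iota_{\sharp}\Sigma_{0}$, one gets $|\Sigma Z| = |\iota_{\sharp}\Sigma_{0}Z| \cong \int^{n\in\Delta_{0}}\Sigma_{0}Z(n)\wedge(\Delta^{n}_{top})_{+}$, a based coend over $\Delta_{0}$.

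Substituting $\Sigma_{0}Z(0)=*$ and $\Sigma_{0}Z(n)=Z_{n-1}$, with structure maps read off from the image of $\Phi$ — the top face $d_{n}$ acts by $*$, the lower faces and lower degeneracies act as on $Z$, and $\iota_{\sharp}$ freely adjoins the top degeneracy — and reindexing $n\mapsto n-1$, the coend reorganizes into $|Z|$ smashed with the realization of $\Delta[1]$ modulo its endpoints. This is precisely the assertion that $\Sigma Z\cong Z\wedge(\Delta[1]/\partial\Delta[1])$ at the level of realizations. Two classical inputs then finish the $k=1$ case: Milnor's theorem that $|?|$ preserves finite products once the target is taken in compactly generated spaces, so that it carries smash products of based simplicial sets to smash products of based spaces, and the elementary identification $|\Delta[1]/\partial\Delta[1]|\cong S^{1}$. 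Together these give $|\Sigma Z|\cong|Z|\wedge S^{1}=\Sigma|Z|$, and tracing the coend isomorphism back through the reduction shows it is natural. Alternatively, one can avoid the coend and instead verify that $\Omega$ is the internal based mapping space out of $\Delta[1]/\partial\Delta[1]$, via the shuffle decomposition of $\Delta[1]\times\Delta[n]$, and conclude $\Sigma Z\cong Z\wedge(\Delta[1]/\partial\Delta[1])$ by uniqueness of adjoints.

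The main obstacle is the bookkeeping in identifying $\Sigma=\iota_{\sharp}\Sigma_{0}$ with the reduced suspension: one must check that the degeneracies freely adjoined by $\iota_{\sharp}$, the collapse of the top face built into $\Sigma_{0}$, and the basepoint identifications reproduce exactly the simplices of $Z\wedge(\Delta[1]/\partial\Delta[1])$ coming from the prism decomposition, and that $\iota_{\sharp}$ creates no spurious non-degenerate simplices that would change the realization. A secondary but essential point is that the whole argument lives in compactly generated spaces: this is the hypothesis making realization preserve products, hence making the smash pass through $|?|$, and it is where a naive topological formulation would fail.
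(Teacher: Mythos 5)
Your scaffolding (reduction to $k=1$ by uniqueness of adjoints, cocontinuity of $|?|$, the coend formula $|\Sigma Z|=|\iota_\sharp\Sigma_0 Z|\cong\int^{n\in\Delta_0}\Sigma_0Z(n)\wedge|\Delta_n|_+$) is fine and close in spirit to the paper's proof, but the step you use to evaluate the coend is false, and it is false in a way this paper is specifically built around. Kan's suspension $\Sigma Z$ is \emph{not} isomorphic to $Z\wedge(\Delta_1/\partial\Delta_1)$ as a simplicial set, and $\Omega$ is \emph{not} the internal mapping object $F(S^1,?)$, so your "alternative route by uniqueness of adjoints" cannot work either. Concretely: the non-degenerate $(n+1)$-simplices of $\Sigma Z$ biject with the non-degenerate $n$-simplices of $Z$ (this is the content of the proof of Proposition \ref{p0}), so $\Sigma\Delta_{m+}$ has exactly one non-degenerate $(m+1)$-simplex; by contrast $\Delta_{m+}\wedge(\Delta_1/\partial\Delta_1)$ has $m+1$ of them, one per shuffle in the prism decomposition of $\Delta_m\times\Delta_1$. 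So for $m\geq 1$ the two simplicial sets differ, and the "bookkeeping check" you flag as the main obstacle --- matching the simplices of $\iota_\sharp\Sigma_0Z$ against the prism simplices of $Z\wedge(\Delta_1/\partial\Delta_1)$ --- is not a gap to be filled but an impossibility. This is exactly why the paper's Lemma \ref{lsusp} produces only a natural \emph{weak equivalence} $(\Sigma K)\wedge T\to\Sigma(K\wedge T)$ (with the two shuffle counts $\binom{m+n+1}{n}$ and $\binom{m+n}{m}$ computed there), why \rref{enattr} is only an injection $F(T,\Omega Y)\to\Omega F(T,Y)$, and why the introduction stresses that smash does not commute with Kan's suspension. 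Note also that if $\Sigma$ were $S^1\wedge ?$, Proposition \ref{p0} (the unit $Z\to\Omega\Sigma Z$ is an \emph{isomorphism}) would fail; that proposition is a second, independent witness that Kan's $(\Sigma,\Omega)$ is not the smash/function-space adjunction. Milnor's product theorem and compact generation are red herrings here: they address $|X\wedge Y|$ versus $|X|\wedge|Y|$, which is not where the difficulty lies.

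The correct way to finish from your coend is geometric rather than simplicial: in $\Sigma_0Z$ the morphisms in the image of $\Phi$ act as on $Z$ and all others act by the basepoint, so the coend glues the spaces $Z(n-1)\wedge|\Delta_n|_+$ using the identification of $|\Delta_n|$ with the cone on $|\Delta_{n-1}|$ at the last vertex; the image-of-$\Phi$ faces and degeneracies assemble the cones into $C|Z|$, while the remaining operators (e.g.\ the top face $d_n$, which acts by $*$) collapse the cone base. One obtains $|\Sigma Z|\cong C|Z|/(|Z|\cup C{*})\cong \Sigma|Z|$, naturally in $Z$. This is exactly what the paper's proof records on representables: $\Sigma(\Delta_{n+})$ has a single non-degenerate element $x$ in dimension $n+1$ with $d_{n+1}(x)=d_0^n(x)=*$, whose realization is visibly $\Sigma|\Delta_{n+}|$ compatibly with faces and degeneracies, whence the general case by naturality and colimits --- the same cocontinuity you invoke. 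So your reduction and coend setup can be kept verbatim; the smash/prism identification must be deleted and replaced by the cone-and-collapse identification.
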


\begin{proof}
It suffices to consider $k=1$.
Consider the case of the standard $n$-simplex $\Delta_{n+}$ where $\Delta_n$ is the representable
simplicial set, $\Delta_n(k)=\Delta(k,n)$. Then $\Sigma(\Delta_{n+})$ has a non-degenerate 
element $x\in \Sigma(\Delta_{n+})(n+1)$ which satisfies $d_{n+1}(x)=d_0^n(x)=*$. Clearly, the
geometric realization of this is canonically identified with $\Sigma|\Delta_{n+}|$, and the identification
is compatible with faces and degeneracies, and thus applies to every simplicial set.
\end{proof}

\begin{corollary}\label{c1}
We have a natural inclusion 
\beg{ec1}{|\Omega(X)|\subseteq \Omega|X|}
on based simplicial sets $X$.
\end{corollary}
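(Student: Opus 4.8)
The plan is to produce the map as the mate of the natural isomorphism of Proposition~\ref{p1} under the suspension--loop adjunctions on simplicial sets and on spaces. Write $\alpha\colon|\Sigma Z|\xrightarrow{\cong}\Sigma|Z|$ for that isomorphism, let $\eta$ be the unit of $\Sigma\dashv\Omega$ on based spaces, and $\epsilon$ the counit of $\Sigma\dashv\Omega$ on based simplicial sets. First I would set
\[
\beta_X\colon |\Omega X|\xrightarrow{\ \eta\ }\Omega\Sigma|\Omega X|\xrightarrow{\ \Omega\alpha^{-1}\ }\Omega|\Sigma\Omega X|\xrightarrow{\ \Omega|\epsilon|\ }\Omega|X|,
\]
where on the right $\Sigma,\Omega$ are the topological suspension and loop functors. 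Naturality is immediate from naturality of $\alpha,\eta,\epsilon$ and functoriality of $|{?}|$, so the only real content of the corollary is that $\beta_X$ is injective and a homeomorphism onto its image.

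For this I would unwind $\beta_X$ on points. As $\eta$ sends $p$ to the loop $s\mapsto p\wedge s$, one has $\beta_X(p)(s)=|\epsilon|\bigl(\alpha^{-1}(p\wedge s)\bigr)$. Take $p=[x,t]$ with $x$ a nondegenerate $m$-simplex of $\Omega X$ (so $x\in X_{m+1}$ with $d_{m+1}x=*$) and $t\in\operatorname{int}\Delta^m$. The isomorphism $\alpha$, computed on the fundamental simplices in the proof of Proposition~\ref{p1}, carries $p\wedge s$ into the open cell of $|\Sigma\Omega X|$ indexed by the suspended generator $\sigma x$ by an explicit suspension coordinate; composing with $|\epsilon|$ and using $\epsilon(\sigma x)=x$ identifies $\beta_X(p)$ with the loop $s\mapsto |x|\bigl(\phi(t,s)\bigr)$, where $|x|\colon\Delta^{m+1}\to|X|$ realizes $x$ and $\phi\colon\Delta^m\times[0,1]\to\Delta^{m+1}$ is the cone parametrization sending both ends of the interval into the face collapsed by $d_{m+1}x=*$. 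Injectivity then follows by recovering $(x,t)$: for interior $t$ the path $\phi(t,-)$ meets $\operatorname{int}\Delta^{m+1}$, so $\beta_X(p)$ passes through the open cell $|x|(\operatorname{int}\Delta^{m+1})$; disjointness of open cells pins down the nondegenerate simplex $x$, and injectivity of $\phi$ on the interior recovers $t$, with boundary strata matched by the simplicial identities exactly as in the proof that a realized injective simplicial map is injective. To keep the bookkeeping finite and to obtain the subspace topology, I would first reduce to finite-dimensional $X$: both $|\Omega(-)|$ and $\Omega|{-}|$ commute with the filtered colimit $X=\operatorname{colim}_n\operatorname{sk}_nX$ along the skeletal inclusions (realization preserves colimits, while simplicial $\Omega$ and, by compactness of $S^1$, topological $\Omega$ preserve these filtered colimits), so $\beta_X=\operatorname{colim}_n\beta_{\operatorname{sk}_nX}$ is a filtered colimit of inclusions, hence an inclusion.

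The hard part will be the explicit identification in the middle step. Because $\Sigma=\iota_\sharp\Sigma_0$ is built from a left Kan extension, the cells of $\Sigma\Omega X$ and the counit $\epsilon$ are opaque in low degrees---already $(\Sigma Z)_0$ is a nontrivial colimit rather than $*$---so pinning down the suspension coordinate $\phi$, verifying $\epsilon(\sigma x)=x$, and controlling the faces of $\sigma x$ all require care. This is exactly where Proposition~\ref{p1}, which determines $\alpha$ on the representables $\Delta_{n+}$ together with the location of the nondegenerate suspended simplex, does the work and lets one read off $\phi$ without recomputing the Kan extension by hand.
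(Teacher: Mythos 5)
Your map $\beta_X=\Omega|\epsilon|\circ\Omega\alpha^{-1}\circ\eta$ is precisely the adjoint of the composite $\Sigma|\Omega X|\cong|\Sigma\Omega X|\xrightarrow{|\epsilon|}|X|$, which is exactly how the paper's proof of Corollary \ref{c1} constructs the map (simplicial realization of the counit of adjunction, transposed via Proposition \ref{p1}), the paper then merely asserting that ``one easily verifies that it is an inclusion.'' So your proposal is correct and takes essentially the same approach; your cell-by-cell injectivity check and skeletal colimit reduction simply carry out the routine verification that the paper leaves to the reader.
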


\begin{proof}
We have a map given by simplicial realization of the counit of adjunction:
$$\Sigma|\Omega(X)|=|\Sigma\Omega(X)|\r |X|.$$
The map \rref{ec1} is its adjoint. One easily verifies that it is an inclusion.
\end{proof}

\begin{proposition}\label{p0}
For a based simplicial set $Z$, the unit of adjunction
\beg{eetaiso}{\eta:Z\r \Omega\Sigma Z}
is an isomorphism.
\end{proposition}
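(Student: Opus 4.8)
The plan is to compute $\Omega\Sigma Z$ degreewise from the explicit descriptions $\Sigma=\iota_\sharp\Sigma_0$ and $(\Omega T)(n)=\{x\in T(n+1)\mid d_{n+1}(x)=*\}$, and then to check that $\eta$ is, in each degree, the inclusion of a summand which turns out to be all of $(\Omega\Sigma Z)(n)$. The structural input I would establish first is a description of $\Sigma Z$. Among the generating morphisms of $\Delta$, every coface and every non-top codegeneracy already lies in $\Delta_0$, and the only generator outside $\Delta_0$ is the top codegeneracy: $\sigma^i\colon\mathbf{m}\r\mathbf{m-1}$ satisfies $|(\sigma^i)^{-1}(m-1)|=2$ exactly when $i=m-1$, and is in $\Delta_0$ otherwise. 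Hence the left Kan extension $\iota_\sharp$ only adjoins the top degeneracies $s_{m-1}\colon(\Sigma Z)(m-1)\r(\Sigma Z)(m)$ freely, and the unit $\Sigma_0 Z\r\iota^*\Sigma Z$ is a monomorphism exhibiting $\Sigma_0 Z(m)=Z(m-1)$ as exactly those $m$-simplices of $\Sigma Z$ that are not proper top degeneracies. Combined with the Eilenberg--Zilber lemma, this says that every $x\in(\Sigma Z)(m)$ has a unique expression $s_{j_1}\cdots s_{j_r}u$ with $m-1\ge j_1>\cdots>j_r\ge0$ and $u$ non-degenerate, and that $x$ lies in the summand $\Sigma_0 Z(m)=Z(m-1)$ if and only if $x$ is not of the form $s_{m-1}(y)$ with $y\ne *$.

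With this in hand the computation is short. By definition $(\Omega\Sigma Z)(n)=\{x\in(\Sigma Z)(n+1)\mid d_{n+1}(x)=*\}$. The operator $d_{n+1}$ is induced by the coface $\delta^{n+1}\colon\mathbf{n}\r\mathbf{n+1}$, which lies in $\Delta_0$ (it omits the top vertex of its target) but not in the image of $\Phi$ (every morphism in the image of $\Phi$ carries the top vertex to the top vertex); hence, by the definition of $\Sigma_0$, the operator $d_{n+1}$ acts by $*$ on the summand $\Sigma_0 Z(n+1)=Z(n)$. Thus $Z(n)=\Sigma_0 Z(n+1)$ is contained in $(\Omega\Sigma Z)(n)$, and unwinding the adjunction $\Sigma\dashv\Omega$ identifies $\eta\colon Z(n)\r(\Omega\Sigma Z)(n)$ with exactly this inclusion. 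Injectivity of $\eta$ in each degree is then immediate.

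For surjectivity I would take $x\in(\Sigma Z)(n+1)$ with $d_{n+1}(x)=*$ and show $x\in\Sigma_0 Z(n+1)$; we may assume $x\ne *$. Writing $x=s_{j_1}\cdots s_{j_r}u$ in Eilenberg--Zilber form with $u\ne *$, if $j_1\le n-1$ then $x$ is not a proper top degeneracy, hence lies in the summand $\Sigma_0 Z(n+1)$ as desired. If instead $j_1=n$, then $x=s_n(x')$ with $x'=s_{j_2}\cdots s_{j_r}u\ne *$, and the simplicial identity $d_{n+1}s_n=\mathrm{id}$ gives $d_{n+1}(x)=x'\ne *$, contradicting the hypothesis. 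Therefore $(\Omega\Sigma Z)(n)=\Sigma_0 Z(n+1)=Z(n)$ and $\eta$ is a bijection in every degree.

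Finally I would check that these bijections are simplicial. On $(\Omega\Sigma Z)(n)$ the operators $d_i,s_i$ for $i\le n$ are, by the definition of $\Omega$, the operators of $\Sigma Z$ induced by $\delta^i$ and $\sigma^i$; for $i\le n$ each of these carries the top vertex of its source to the top vertex of its target and is readily seen to lie in the image of $\Phi$, so it acts on the summand $\Sigma_0 Z=Z$ exactly as the corresponding operator of $Z$. Hence the degreewise bijections assemble into the simplicial isomorphism $Z\cong\Omega\Sigma Z$, which is $\eta$. I expect the only genuine obstacle to be the first step: making precise that $\iota_\sharp$ contributes nothing beyond freely adjoined top degeneracies, i.e. that $\Sigma_0 Z$ embeds in $\Sigma Z$ as the non-top-degenerate simplices. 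Once the elementary combinatorics of which morphisms of $\Delta$ fail to lie in $\Delta_0$, and which lie in the image of $\Phi$, is pinned down, the remaining adjunction bookkeeping is routine.
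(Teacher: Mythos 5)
Your proof is correct and rests on the same ingredients as the paper's own argument: the Eilenberg--Zilber unique-degeneracy lemma, the fact that $\iota_\sharp$ adjoins only (top-)degenerate simplices so that $\Sigma_0 Z$ includes levelwise into $\Sigma Z$, and the identity $d_{n+1}s_n=\mathrm{id}$ to rule out top degeneracies among elements annihilated by $d_{n+1}$. The difference is only bookkeeping --- you compute $(\Omega\Sigma Z)(n)$ outright via the wedge decomposition of $\iota_\sharp$, whereas the paper tracks only non-degenerate elements and concludes from the bijectivity of $\eta_{nd}$; the step you flag as the main obstacle is precisely what the paper asserts, with even less detail, in its parenthetical remark that $\iota_\sharp$ is a levelwise inclusion whose new elements are all degenerate.
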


\begin{proof}
Every element of a simplicial set is uniquely expressible as an iterated degeneracy (meaning 
a composition of degeneracies)
of a non-degenerate element (i.e. one which is not in the image of a degeneracy). Now we have a bijection $b$
from the non-degenerate elements of $Z(n)$ to the non-degenerate elements
of $\Sigma Z(n+1)$ for $n\geq 0$ (since this is by definition
true for $\Sigma_0$, and $\iota_\sharp$ is an inclusion on each $?(n)$, and all of the elements in its image
are degenerate. 

On the other hand, we also have a bijection between the non-degenerate elements of $\Omega T(n)$
to the non-degenerate elements of $T(n+1)$ which satisfy $d_{n+1}(x)=*$
for any based simplicial set $T$. This is because if, for $x\in T(n+1)$, $d_{n+1}(x)=*$, then
$x=\rho (y)$ for $y$ non-degenerate where $\rho $ is an iterated degeneracy satisfying \rref{errho} (with $n$ replaced
by $n+1$). Thus, if $x\in \Omega T(n)$ is non-degenerate if and only if $x\in T(n+1)$ is.

Now if we denote for any simplicial set $Z$ by $Z_{nd}$ the sequence of sets of non-degenerate elements,
we see that for $x\in Z_{nd}(n)$, $b(x)\in (\Sigma Z)_{nd}(n+1)$ satisfies $d_{n+1}(b)=*$. Thus, $\eta$
preserves non-degenerate elements, and we have a commutative diagram
$$\diagram
Z_{nd}(n)\rrto^\cong\drto_{\eta_{nd}} && (\Sigma Z)_{nd}(n+1)\\
& (\Omega\Sigma Z)_{nd}(n).\urto_\subseteq &
\enddiagram$$
Thus, $\eta_{nd}$ is a bijection, which implies the statement of the Proposition.

\end{proof}

Similarly, one can consider the category $\Delta^{Op}_{st}\dash Set_\bullet$ of functors
$\Delta^{Op}_{st}\r Set_\bullet$ and natural transformations, called the category of stable simplicial
sets. One has functors
$$\Omega^{\infty+k}:\Delta^{Op}_{st}\dash Set_\bullet\r \Delta^{Op}\dash Set_\bullet$$
where for 
$$(\Omega^{\infty+k}(T))(n)=\{x\in T(n+k)\mid d_{m}(x)=*\text{ for $m>n$}\}.$$
Clearly, we have 
$$\Omega^\ell\Omega^{\infty+k}=\Omega^{\infty+k+\ell},$$
and the functor $\Omega^{\infty+k}$ has a left adjoint $\Sigma^{\infty+k}$.

The category $\mathscr{S}$ of {\em combinatorial spectra} is the full subcategory of 
$\Delta^{Op}_{st}\dash Set_\bullet$ on all stable simplicial sets $T$ with the property that
for all $x\in T(n)$ there exists a $k$ such that $d_m(x)=*$ for $m>k$. This is a coreflexive
subcategory of $\Delta^{Op}_{st}\dash Set_\bullet$ (by passing to the subset of all elements
satisfying the condition). Therefore, the category $\mathscr{S}$ has all limits and colimits.

\begin{proposition}\label{p2}
The category $\mathscr{S}$ is canonically equivalent to the category whose objects are sequences
of simplicial sets $(Z_n)_{n\in \N_0}$ ($\N_0$ can also be equivalently replaced with $\Z$)
together with isomorphisms of simplicial sets
\beg{ecsloop}{\diagram\rho_n:Z_n\rto^\cong &\Omega Z_{n+1}
\enddiagram}
and morphisms are
sequences of morphisms of simplicial sets commuting with the structure maps.
\end{proposition}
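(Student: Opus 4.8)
The plan is to prove the equivalence by exhibiting mutually quasi-inverse functors $F$ and $G$ and checking that both composites are naturally isomorphic to the identity. Write $\mathcal{Z}$ for the category of sequences $(Z_n,\rho_n)$ described in the statement. For $F\colon\mathscr{S}\r\mathcal{Z}$ I would send a combinatorial spectrum $T$ to $Z_n:=\Omega^{\infty-n}(T)$, i.e.\ $\Omega^{\infty+k}$ with $k=-n$. The identity $\Omega^\ell\Omega^{\infty+k}=\Omega^{\infty+k+\ell}$, used with $\ell=1$ and $k=-(n+1)$, gives $\Omega Z_{n+1}=\Omega\Omega^{\infty-(n+1)}(T)=\Omega^{\infty-n}(T)=Z_n$, so I may take each structure map $\rho_n$ to be the identity, which is in particular an isomorphism; hence $F(T)$ is a legitimate object of $\mathcal{Z}$, and functoriality of $F$ is immediate from that of each $\Omega^{\infty-n}$.

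The substance is the reverse functor $G$, where the key point is that the finiteness condition defining $\mathscr{S}$ lets one recover a spectrum from its sequence as a filtered colimit. Unwinding the definition of $\Omega^{\infty+k}$, the $(m+p)$-simplices of $\Omega^{\infty-p}(T)$ are precisely the elements $x\in T(m)$ with $d_j(x)=*$ for $j>m+p$; since every $x\in T(m)$ has $d_j(x)=*$ for all sufficiently large $j$, this gives $T(m)=\operatorname*{colim}_p Z_p(m+p)$, an increasing union. Accordingly I would \emph{define} $G((Z_n,\rho_n))(m):=\operatorname*{colim}_p Z_p(m+p)$, the transition map $Z_p(m+p)\r Z_{p+1}(m+p+1)$ being $\rho_p$ followed by the canonical inclusion $(\Omega Z_{p+1})(m+p)=\{y\in Z_{p+1}(m+p+1)\mid d_{m+p+1}(y)=*\}\hookrightarrow Z_{p+1}(m+p+1)$. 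A face $d_i$ or degeneracy $s_i$ of $\Delta_{st}$ is then applied to an element of this colimit by representing it at a filtration stage $p$ large enough that $m+p$ exceeds all indices occurring, and computing inside the single simplicial set $Z_p$; because the transition maps land in the kernel of the top face, one automatically has $d_j(x)=*$ for $j>m+p$ at the stage where $x$ is represented, which is exactly the finiteness condition for membership in $\mathscr{S}$.

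It remains to check that the operators just described are well defined on the colimit and satisfy the simplicial relations of $\Delta_{st}$, and that the composites $FG$ and $GF$ are naturally isomorphic to the identities. Well-definedness and the relations follow from a single observation: any finite composite of operators applied to a fixed element can be evaluated inside one $Z_p$ with $p$ large, where the relations hold and where the transition maps, being built from the simplicial isomorphisms $\rho_p$, commute with all operators in range. The isomorphism $FG\cong\mathrm{id}$ comes from re-reading the colimit description as $(\Omega^{\infty-n}G(Z))(j)\cong Z_n(j)$, natural in $j$ and compatible with the structure maps (so that the identity structure maps of $FG(Z)$ correspond to the given $\rho_n$), while $GF\cong\mathrm{id}$ is precisely the colimit identity $T(m)=\operatorname*{colim}_p Z_p(m+p)$ recorded above. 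I expect the only real obstacle to be this verification that faces and degeneracies descend to the colimit and obey the $\Delta_{st}$-relations, together with their compatibility with the transition maps; the remaining identifications are direct translations between the two descriptions. Finally, the parenthetical claim that $\N_0$ may be replaced by $\Z$ holds because the isomorphisms $\rho_n$ reconstruct the negative part of a $\Z$-indexed sequence from its non-negative part via $Z_{-k}\cong\Omega^k Z_0$, yielding an equivalence between the $\N_0$- and $\Z$-indexed forms of $\mathcal{Z}$.
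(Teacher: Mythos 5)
Your proposal is correct and follows essentially the same route as the paper's proof: the paper also sends a spectrum $T$ to the sequence $Z_n=\Omega^{\infty-n}T$ and reconstructs a spectrum via $Z(m)=\operatornamewithlimits{colim}_k Z_k(m+k)$, asserting the two functors are mutually inverse up to canonical natural isomorphism. Your write-up merely supplies the routine verifications (identification of the transition maps, well-definedness of the $\Delta_{st}$-operators on the colimit, and the $\N_0$ versus $\Z$ indexing) that the paper leaves implicit.
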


\begin{proof}
For a combinatorial spectrum $Z$, put
$$Z_n=\Omega^{\infty-n}Z.$$
For a sequence $Z_n$ with structure maps \rref{ecsloop},
define 
$$Z(n)=\operatornamewithlimits{colim}_k Z_{k}(n+k).
$$
By definition, these functors are inverse to each other up to canonical natural isomorphisms.
\end{proof}

In view of Corollary \ref{c1}, Proposition \ref{p2} gives the sequence $(|Z_n|)$
a functorial structure of an inclusion prespectrum. Denote by $\mathscr{L}(Z)$ the associated
May spectrum. A morphism $f:X\r Y$ of combinatorial spectra is called a {\em (weak) equivalence}
if $\mathscr{L}(f)$ is an equivalence of May spectra.

\vspace{3mm}
Also, Proposition \ref{p2} suggests to also consider the concept of a
{\em combinatorial prespectrum} $Z$ which is a sequence
of morphisms of based simplicial sets
\beg{ecsloop1}{\rho_n:Z_n\r \Omega Z_{n+1},\; n\in \Z
}
(or, by adjunction equivalently, $\Sigma Z_n\r Z_{n+1}$), without any additional assumptions on $\rho_n$. A morphism 
of combinatorial prespectra
$f:Z\r T$
is a sequence of morphisms $f_n:Z_n\r T_n$ such that we have commutative diagrams
$$\diagram
Z_n\dto_{\rho_n}\rto^{f_n} &
T_n\dto^{\rho_n}\\
\Omega Z_{n+1}\rto^{\Omega f_{n+1}}& \Omega T_{n+1}.
\enddiagram$$
Denote the category of combinatorial prespectra by $\mathscr{P}$. We then have a ``forgetful'' functor
$$Ps:\mathscr{S}\r\mathscr{P},$$
which has a left adjoint
$$Sp:\mathscr{P}\r\mathscr{S}.$$
In the notation of Proposition \ref{p2}, for a prespectrum $Z$, we have
$$(Sp(Z))_n=\operatornamewithlimits{colim}\Omega^k Z_{n+k}.$$
Similarly as in the context of May spectra, a variant of the construction of $\mathscr{P}$ is obtained by replacing
the indexing set $\Z$ with $\N_0$ in \rref{ecsloop1}. The resulting category will be denoted
by $\mathscr{P}_0$ and the analogues of the functors $Ps$, $Sp$ by $Ps_0$, $Sp_0$. There is a canonical forgetful
functor $\mathscr{P}\r \mathscr{P}_0$. It is not an equivalence of categories.

\vspace{3mm}

For a combinatorial spectrum $X$, an element $x\in X(n)$, $x\neq *$, is called {\em non-degenerate} if $x$ is not in the image of a degeneracy.

\vspace{3mm}
\begin{lemma}\label{nondeg}
Let $Z$ be a combinatorial spectrum and let $x\in Z(n)$, $x\neq *$. Then there exists a unique degeneracy in the
wider sense $s\in \Delta_{st}(n,m)$ and a unique element $y\in Z(m)$ such that $x=s(y)$ and $y$ is
non-degenerate.
\end{lemma}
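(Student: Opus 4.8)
The plan is to reduce the statement to the classical Eilenberg--Zilber unique-decomposition lemma for ordinary simplicial sets, transported through the colimit description of Proposition \ref{p2}. Using that equivalence, I would write $Z$ as a sequence $(Z_k)$ of simplicial sets with isomorphisms $\rho_k:Z_k\xrightarrow{\cong}\Omega Z_{k+1}$, so that $Z(n)=\operatorname{colim}_k Z_k(n+k)$, the colimit being taken along the structure maps $Z_k(n+k)\cong\Omega Z_{k+1}(n+k)\hookrightarrow Z_{k+1}(n+k+1)$. For $k$ large these are genuine non-negative simplicial dimensions, and each structure map is an inclusion, so in fact $Z(n)=\bigcup_k Z_k(n+k)$.

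Before the main argument I would record two compatibilities. First, the degeneracy operators of the stable simplicial set $Z$ are, at each finite level $k$ (via $\Phi^{\infty-k}$), exactly the degeneracy operators of the simplicial set $Z_k$, and a degeneracy in the wider sense $s\in\Delta_{st}(n,m)$ is represented at level $k$ by a genuine surjection $\mathbf{n+k}\to\mathbf{m+k}$ in $\Delta$; consequently $x\in Z(n)$ is degenerate in $Z$ if and only if any one representative $x_k\in Z_k(n+k)$ is degenerate in $Z_k$. Second, the structure maps preserve and reflect non-degeneracy: since $\rho_k$ is an isomorphism and, by the argument in the proof of Proposition \ref{p0}, an element $w\in\Omega T(m)$ is non-degenerate precisely when it is non-degenerate as an element of $T(m+1)$, the inclusion $Z_k(n+k)\hookrightarrow Z_{k+1}(n+k+1)$ sends non-degenerate elements to non-degenerate elements and conversely. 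Hence $y\in Z(n)$ is non-degenerate exactly when some (equivalently every) representative $y_k\in Z_k(n+k)$ is non-degenerate in $Z_k$.

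For \emph{existence}, I would choose $k$ with $n+k\geq 0$ and a representative $x_k\in Z_k(n+k)$; the classical Eilenberg--Zilber lemma applied to the simplicial set $Z_k$ yields a unique surjection $s$ and a unique non-degenerate $y_k\in Z_k$ with $x_k=s(y_k)$ (here the decomposition terminates because simplicial dimension is bounded below by $0$). Transporting to $Z$ gives $x=s(y)$ with $y$ the class of $y_k$, which is non-degenerate in $Z$ by the reflection property above, and $y\neq *$ since $x\neq *$ and $s(*)=*$. For \emph{uniqueness}, given $x=s(y)=s'(y')$ with $s,s'$ degeneracies in the wider sense and $y,y'$ non-degenerate, I would pass to a common finite level $k$ at which $s,s',y,y'$ are all represented; then $y_k,y_k'$ are non-degenerate in $Z_k$, and the uniqueness clause of Eilenberg--Zilber in $Z_k$ forces $s=s'$ and $y_k=y_k'$, whence $y=y'$.

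The one genuine obstacle is termination of the degeneracy-peeling: in $\Delta_{st}$ the object set is all of $\Z$, so the downward induction on dimension that proves the simplicial case is not directly available, and a priori an element could be written as an infinite descending chain of degeneracies. This is exactly what the passage to a finite level $Z_k$ resolves, since there dimension is non-negative and the classical lemma applies; such a finite-level representative exists because, by Proposition \ref{p2}, $Z(n)$ is the filtered colimit of the $Z_k(n+k)$. The remaining points---that degeneracies and non-degeneracy translate correctly between $Z$ and the $Z_k$---are the two compatibilities recorded above, and are precisely where the isomorphisms $\rho_k$ and Proposition \ref{p0} are used.
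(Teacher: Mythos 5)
Your proof is correct and takes essentially the same route as the paper's: both use Proposition \ref{p2} to regard $x$ as an element of an ordinary based simplicial set at a finite level, apply the classical Eilenberg--Zilber unique-decomposition lemma there, and handle the non-uniqueness of the chosen level by passing to a common (larger) one for the two putative decompositions. Your explicit check that non-degeneracy is preserved and reflected along the structure maps (via the observation in the proof of Proposition \ref{p0}) merely spells out what the paper leaves implicit.
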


\begin{proof}
Suppose $d_s(x)=*$ for $s>N$. By the proof of Proposition \ref{p2}, we may consider $x$ as an
element of $Z_{n-N}(N)$ which is an ordinary (based) simplicial set, where an analogous 
statement is well known,
existence and uniqueness. $N$ is of course not uniquely determined, but if also $x=s^\prime(y^\prime)$
for a degeneracy $s^\prime$ and a non-degenerate element $y^\prime$, we can use the larger of the
$N$'s for $y$ and $y^\prime$ to see that $y=y^\prime$, $s=s^\prime$.
\end{proof}

The sphere spectrum $S$ is the
free combinatorial spectrum on one element $\alpha\in S(0)$ with the relation that $d_i(\alpha)=*$ for
$i\geq 0$. We have
$$S(n)=\{\alpha_n,*\} \;\text{for $n\geq 0$}$$
where $\alpha_n$ is the iterated degeneracy of $\alpha$,  and
$$S(n)=\{*\}\;\text{for $n<0$}.$$
It is immediate from the definition that
$$S=\Sigma^\infty S^0.$$
In the notation of Proposition \ref{p2}, the based simplicial sets $S_n$ are the free based simplicial sets
on one element $\alpha_n\in S_n(n)$ such that $d_i(\alpha_n)=*$ for all $i\geq 0$. In particular, we have
$$\Omega S_n\cong S_{n-1}.$$

\vspace{3mm}
Recall that the standard $n$-simplex is the based simplicial set
$$\Delta_n=\Delta(?,n)$$
(the representable functor). 
As usual, we denote by $\Delta_n^{\circ}$ the simplicial set obtained from $\Delta_n$ by deleting the
non-degenerate element $\alpha\in\Delta_n(n)$ and all its degeneracies, and by $V_{n,k}$ the simplicial set
obtained by deleting, additionally, $d_k(\alpha)$ and all its degeneracies. 

A {\em relative combinatorial cell spectrum} is a morphism $f:X\r Y$ of combinatorial spectra
such that there exist combinatorial spectra $Y_{(m)}$, $m\geq -1$ 
where $Y_{(-1)}=X$,  indexing sets $I_m$, indexing morphisms $n_m:I_m\r\N_0$, $\ell_m:I_m\r \Z$
and pushout diagrams of combinatorial spectra
\beg{ecell1}{\diagram
\displaystyle\bigvee_{i\in I_m} \Sigma^{\infty+\ell_m(i)}\Delta_{n_m(i)+}^\circ\rto^(.6){f_m} \dto_\subseteq & Y_{(m-1)}\dto\\
\displaystyle\bigvee_{i\in I_m} \Sigma^{\infty+\ell_m(i)}\Delta_{n_m(i)+}\rto & Y_{(m)}
\enddiagram
}
such that
$$Y=\lim_\r Y_{(m)}.$$
Here $\bigvee$ denotes the coproduct.

An anodyne extension is defined in the same way as a relative combinatorial cell spectrum 
where in \rref{ecell1}, $\Delta_{n_m(i)}^\circ$ is replaced by $V_{n_m(i), k_m(i)}$. Explicitly, an
{\em anodyne extension} is a morphism $f:X\r Y$ of combinatorial spectra
such that there exist combinatorial spectra $Y_{(m)}$, $m\geq -1$ 
where $Y_{(-1)}=X$,  indexing sets $I_m$, indexing morphisms $n_m:I_m\r\N_0$, $\ell_m:I_m\r \Z$
and pushout diagrams of combinatorial spectra
\beg{ecell1}{\diagram
\displaystyle\bigvee_{i\in I_m} \Sigma^{\infty+\ell_m(i)}V_{n_m(i), k_m(i)+}
\rto^(.6){f_m} \dto_\subseteq & Y_{(m-1)}\dto\\
\displaystyle\bigvee_{i\in I_m} \Sigma^{\infty+\ell_m(i)}\Delta_{n_m(i)+}\rto & Y_{(m)}
\enddiagram
}
such that
$$Y=\lim_\r Y_{(m)}.$$

Clearly,
anodyne extensions are weak equivalences.

\vspace{3mm}
\begin{proposition}\label{pcell}
Every injective morphism of combinatorial spectra is a relative combinatorial cell spectrum. In particular,
every combinatorial spectrum is a cell combinatorial spectrum.
\end{proposition}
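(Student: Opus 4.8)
The plan is to imitate, in the stable combinatorial setting, the classical proof that every monomorphism of simplicial sets is a relative cell complex built from the boundary inclusions $\Delta_n^\circ\hookrightarrow\Delta_n$. First I would reduce to the case of a genuine inclusion of a subspectrum $X\subseteq Y$, which is harmless since an injective natural transformation of functors into $Set_\bullet$ is isomorphic to a subfunctor inclusion. The role played by the Eilenberg--Zilber lemma for simplicial sets will be taken here by Lemma \ref{nondeg}: every nonbasepoint element of $Y$ is uniquely an iterated degeneracy of a nondegenerate element. This is what lets me index the cells by the nondegenerate elements of $Y$ not lying in $X$.

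The filtration will be by a dimension function. For a nondegenerate $y\neq *$ let $\delta(y)$ be the least integer $t$ with $d_j y=*$ for all $j>t$; this is finite by the defining property of $\mathscr{S}$, and I set $d(y)=\max(\delta(y),0)$. By adjunction $y\in(\Omega^{\infty+\ell}Y)(d(y))$ with $\ell=\deg(y)-d(y)$, so $y$ is exactly the image of the top cell under a map $\Sigma^{\infty+\ell}\Delta_{d(y)+}\r Y$; this is the cell attached for $y$, with indices $n_m(i)=d(y)$ and $\ell_m(i)=\deg(y)-d(y)$ in the notation of \rref{ecell1}. I would then set $Y_{(-1)}=X$ and let $Y_{(m)}$, for $m\geq 0$, be the subspectrum generated by $X$ together with all nondegenerate elements of dimension $\leq m$, and check that this is closed under faces and degeneracies, hence a genuine subspectrum, the new nondegenerate elements of $Y_{(m)}$ being exactly the nondegenerate $y\notin X$ with $d(y)=m$.

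The heart of the argument is that the attaching maps land in the previous stage and that \rref{ecell1} is a genuine pushout. Using the simplicial identities one checks that faces never increase $\delta$, and that for nondegenerate $y$ the nonbasepoint faces satisfy $\delta(d_i y)\leq\delta(y)-1$; since a nondegenerate representative is obtained from its element by a face operator splitting the relevant degeneracy, the nondegenerate representative of each $d_i y$ then has dimension strictly smaller than $d(y)$. Thus the restriction of the attaching map to $\Sigma^{\infty+\ell}\Delta_{d(y)+}^\circ$, which is determined by $d_0 y,\dots,d_{d(y)}y$, factors through $Y_{(m-1)}$. That the resulting square is a pushout is then bookkeeping with Lemma \ref{nondeg}: the nondegenerate elements of $\Sigma^{\infty+\ell}\Delta_{d(y)+}$ not already in $\Sigma^{\infty+\ell}\Delta_{d(y)+}^\circ$ are precisely the top cell and its degeneracies, so the pushout adjoins exactly $y$ as a new nondegenerate element together with its degeneracies, glued along its boundary. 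Finally $Y=\lim_\r Y_{(m)}$ because every element of $Y$ has a nondegenerate representative of finite $\delta$, hence lies in some $Y_{(m)}$; taking $X=*$ yields the ``in particular''.

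The step I expect to be the main obstacle is pinning down the cell structure at the bottom of the filtration, where the discrepancy between $\delta(y)$ and the simplicial dimension $d(y)$ of the attached simplex must be controlled. The crucial point is that $\delta(y)=0$ can never occur: if $d_j y=*$ for all $j\geq 1$ then $y$ defines, by adjunction, a map out of the connective sphere $S=\Sigma^{\infty}\Delta_{0+}$ sending the generator to $y$, and since $S$ is connective its generator has $d_0=*$, forcing $d_0 y=*$ and hence $\delta(y)=-1$. Consequently the dimension-zero cells are exactly the $\delta(y)=-1$ elements, whose boundaries $\Sigma^{\infty+\ell}\Delta_{0+}^\circ=*$ are trivial so that they enter as wedge summands $\Sigma^{\infty+\ell}\Delta_{0+}$, and faces genuinely drop the dimension $d$. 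Without this observation the filtration would fail to be strictly decreasing along faces. Verifying this connectivity fact cleanly, and then carrying out the pushout bookkeeping uniformly in the suspension index $\ell$, is where the real work will lie.
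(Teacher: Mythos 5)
Your overall strategy is exactly the paper's: filter by the degree $\delta$, use Lemma \ref{nondeg} to index cells by the nondegenerate elements outside $X$, check that faces strictly decrease the degree of nondegenerate representatives, and then do the pushout bookkeeping. But the step you yourself single out as the crux is false: nondegenerate elements with $\delta(y)=0$ do exist. Consider the free combinatorial spectrum on one element $x$ in degree $0$ subject only to the relations $d_mx=*$ for $m>0$: then $d_0x$ is a nonbasepoint element (all of whose own faces vanish, as forced by $d_md_0=d_0d_{m+1}$), and nothing in the definition of $\mathscr{S}$ forces $d_0x=*$, so $\delta(x)=0$. In fact this spectrum is precisely $\Sigma^{\infty}\Delta_{0+}$ computed as the left adjoint of $\Omega^{\infty}$: a based simplicial map $\Delta_{0+}\r\Omega^{\infty}Y$ is just an element of $(\Omega^{\infty}Y)(0)=\{x\in Y(0)\mid d_mx=*\ \text{for}\ m>0\}$, with no condition on $d_0$, because a $0$-simplex of a simplicial set has no faces. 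Your argument runs backwards: relations holding in the source of a map (such as $d_0\alpha=*$ in the sphere $S$) are constraints that can obstruct the existence of a map hitting a given $y$; they are never conditions imposed on a previously given element of the target. The object whose maps into $Y$ the adjunction classifies is the cone just described, with a free $d_0$ on its generator, not $S$; the paper's parenthetical identification $S=\Sigma^{\infty}S^0$ is what misled you, and combined with the adjunction it would indeed ``prove'' your claim --- but it is then contradicted by the example above, so you cannot lean on both at once.

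The gap is not cosmetic. With $\delta(y)=0$ elements present, your dimension function $d(y)=\max(\delta(y),0)$ no longer strictly drops along faces, and a $0$-cell $\Sigma^{\infty+\ell}\Delta_{0+}$ attached along the trivial boundary $\Sigma^{\infty+\ell}\Delta_{0+}^{\circ}=*$ cannot glue the face $d_0y$ to the element already present at an earlier stage: the pushout creates a fresh free face (note $\Sigma^{\infty+\ell}\Delta_{0+}$ has \emph{two} nondegenerate elements outside its boundary), so the colimit fails to be $Y$. The repair is to attach every nondegenerate $y$ with $\delta(y)\leq 0$ by a $1$-dimensional cell: since $d_my=*$ for $m>1$, $y$ defines a map $\Sigma^{\infty+\ell}\Delta_{1+}\r Y$ for the appropriate $\ell$ sending the $1$-simplex to $y$, whose restriction to $\Sigma^{\infty+\ell}\Delta_{1+}^{\circ}$ sends one vertex to (the nondegenerate representative of) $d_0y$ and the other to $*$; now the only nondegenerate element outside the boundary is the top one, the $\delta=-1$ elements are attached first and the $\delta=0$ elements next, and the bookkeeping closes. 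The paper's own write-up is terse at exactly this point --- it attaches $\Delta_{k+}$-cells for degree-$k$ elements without discussing $k\leq 0$ --- so your instinct that the bottom of the filtration is the delicate spot was sound; it is only your resolution of it that fails.
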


\begin{proof}
The only non-degenerate element of $\Sigma^{\infty+\ell}\Delta_{n+}$ which is not in
$\Sigma^{\infty+\ell}\Delta_{n+}^\circ$ is in dimension $n+\ell$, corresponding to the
non-degenerate element of $\Delta_{n}$ which is not in $\Delta_{n}^\circ$. For a non-degenerate
element $x\in Y(n)\smallsetminus X(n)$, let the {\em degree} of $x$ be the minimum $k$ such that 
$d_i(x)=*$ for $i>k$. Then we can let $Y_{(k)}(n)$ be the set of all elements $x$ of the form 
$s(y)$ where $s$ is a degeneracy in the wider sense, and $y$ is non-degenerate of degree $\leq k$. 
By Lemma \ref{nondeg}, $Y_{(k)}$ is a subspectrum of $Y$, and moreover, $Y_{(k+1)}(n)$ is 
obtained from $Y_{(k)}$ by attaching a cell $\Sigma^{\infty-n}(\Delta_{k+})$
for every non-degenerate element of $Y(n)$ of degree $k$.
\end{proof}

\vspace{3mm}
It is important to note that if we can attach a cell of the form $\Sigma^{\infty-\ell}\Delta_{n+}$
to a combinatorial spectrum, we can obtain an {\em isomorphic} combinatorial spectrum by attaching a cell of the form
$\Sigma^{\infty-\ell-k}\Delta_{(n+k)+}$ instead for any $k\geq 0$. Also, a combinatorial cell spectrum
(and hence every combinatorial spectrum) is naturally a directed colimit of inclusions of its finite cell subspectra, which
are, by adjunction, shift desuspensions of simplicial sets. Therefore, in particular, every combinatorial
spectrum is a directed direct limit of inclusions of shift desuspensions.

\vspace{3mm}
For based simplicial sets $K,T$, we have a simplicial set
$$F(K,T)=\Delta^{Op}\dash Set_\bullet(K\wedge (\Delta_n)_+, T)$$
where $(?)_+$ means attaching a disjoint base point. The left adjoint to this functor is
$K\wedge ?$.

\begin{lemma} \cite{kan}\label{lsusp}
There is a canonical natural morphism of based simplicial sets
\beg{esusp0}{(\Sigma K)\wedge T\r \Sigma(K\wedge T)}
which is a weak equivalence (i.e. becomes a homotopy equivalence after applying simplicial realization).
\end{lemma}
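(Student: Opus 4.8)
The plan is to prove the two assertions in turn: first to exhibit the natural comparison map \rref{esusp0}, and then to show that its simplicial realization is a homotopy equivalence. Producing the map is the subtle point, because it is \emph{not} a formal consequence of the adjunction $\Sigma\dashv\Omega$. The transformation that the adjunction does supply is the lax structure of $\Omega$, namely the natural map $\nu\colon(\Omega X)\wedge T\r\Omega(X\wedge T)$ defined on $n$-simplices by $(x,t)\mapsto(x,s_n t)$; this lands in the target because $x\in(\Omega X)(n)$ forces $d_{n+1}x=*$, whence $d_{n+1}(x,s_n t)=(*,t)=*$ in the smash. Its mate under $\Sigma\dashv\Omega$ is a natural map $\Sigma(K\wedge T)\r(\Sigma K)\wedge T$, running in the direction \emph{opposite} to \rref{esusp0}. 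Since $\Omega$ admits no corresponding oplax structure $\Omega(X\wedge T)\r(\Omega X)\wedge T$ (there is no natural choice on the elements killed only in the $T$-coordinate), the arrow \rref{esusp0} must be built by hand.

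For the construction I would use the explicit presentation $\Sigma=\iota_\sharp\Sigma_0$ together with Lemma \ref{nondeg}. Restricting $T$ along $\iota\colon\Delta^{Op}_{0}\r\Delta^{Op}$ to $\iota^* T$, one has at each $\Delta_0$-level $(\Sigma_0 K)(n)\wedge(\iota^* T)(n)=K(n-1)\wedge T(n)$, while $\Sigma_0(K\wedge T)(n)=K(n-1)\wedge T(n-1)$, so forgetting the top coordinate added by $\Phi$ via the top face $d_n\colon T(n)\r T(n-1)$ gives a candidate $(\Sigma_0 K)\wedge\iota^* T\r\Sigma_0(K\wedge T)$, $(y,t)\mapsto(y,d_n t)$, in $\Delta^{Op}_{0}\dash Set_\bullet$. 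The first thing to check is that this is a morphism of $\Delta^{Op}_{0}$-based sets, i.e. that it commutes with the morphisms $\Phi(\rho)$, which are the only ones acting nontrivially after $\Sigma_0$; this is exactly where the defining bound \rref{errho} of $\Delta_0$ is used. Since these $\Phi(\rho)$ determine the non-degenerate simplices of $\Sigma K$, extending to all of $\Delta^{Op}$ — using that every simplex is uniquely a degeneracy of a non-degenerate one (Lemma \ref{nondeg}) to propagate the assignment — then yields \rref{esusp0}. Naturality in $K$ and $T$ follows from that of each ingredient; alternatively, since $\Sigma$, $(-)\wedge T$ and $K\wedge(-)$ all preserve colimits, it is enough to pin the map down on representables $K=\Delta_{m+}$, $T=\Delta_{p+}$, where $\Sigma\Delta_{m+}$ is the based simplicial set with a single non-degenerate $(m{+}1)$-simplex whose faces vanish as prescribed by Proposition \ref{p1}, and then extend by colimits.

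The weak-equivalence claim I would settle by realization. By Proposition \ref{p1} there is a natural isomorphism $|\Sigma(-)|\cong\Sigma|(-)|=(-)\wedge S^1$, and realization takes the smash of based simplicial sets to the smash of spaces, $|K\wedge T|\cong|K|\wedge|T|$ (in compactly generated spaces). Under these identifications the realization of \rref{esusp0} is the canonical reassociation-and-transposition map
$$(|K|\wedge S^1)\wedge|T|\xrightarrow{\ \cong\ }(|K|\wedge|T|)\wedge S^1,$$
which is a homeomorphism and hence a homotopy equivalence; this is precisely the meaning of ``weak equivalence'' recorded in the statement. That the realized map really is this homeomorphism, rather than some other self-equivalence of $\Sigma(|K|\wedge|T|)$, is again checked by naturality after reduction to $K=\Delta_{m+}$, $T=\Delta_{p+}$, tracking the single suspension coordinate.

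The hard part is the first half. The simplicial $\Sigma$ genuinely differs from $(-)\wedge S^1$ — already $\Sigma T$ and $S^1\wedge T$ disagree in dimension $1$, where they are $T(0)$ and $T(1)$ respectively — so \rref{esusp0} cannot be an isomorphism and is not the inverse of the formal mate. The whole content therefore lies in verifying that the top-face prescription $(y,t)\mapsto(y,d_n t)$ descends to a well-defined simplicial natural transformation in the stated direction; this is a bookkeeping argument on faces and degeneracies organized by Lemma \ref{nondeg}. Once the map is in hand, the realization step is formal given Proposition \ref{p1}.
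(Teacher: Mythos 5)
There is a genuine gap, and it is in the half you declared formal. Your realization argument claims that under the identifications $|\Sigma(-)|\cong\Sigma|(-)|$ and $|K\wedge T|\cong|K|\wedge|T|$, the realization of \rref{esusp0} \emph{is} the transposition homeomorphism $(|K|\wedge S^1)\wedge|T|\cong(|K|\wedge|T|)\wedge S^1$. This cannot be right, and you in fact contradict it yourself in your last paragraph: since \rref{esusp0} is not injective, its realization is not injective either. Count top cells on representables: in dimension $m+n+1$ the source $(\Sigma\Delta_{m+})\wedge\Delta_{n+}$ has $\binom{m+n+1}{n}$ non-degenerate simplices while the target $\Sigma(\Delta_{m+}\wedge\Delta_{n+})$ has only $\binom{m+n}{m}$, so any such map must send some non-degenerate $(m+n+1)$-simplices to degenerate ones, and the realization then collapses the corresponding open cells onto lower-dimensional skeleta. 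Already for $K=S^0$, $T=\Delta_{1+}$ the realized map collapses one of the two triangles of $S^1\wedge|\Delta_{1+}|$ onto an edge. So the two homeomorphic spaces are related by a genuine quotient-type map, homotopic to the transposition but not equal to it, and some actual homotopy-theoretic input is required. The paper supplies it: the map \rref{esusp0} degenerates (via the last degeneracy) exactly the shuffles in which the top element of the suspension coordinate is not last, and its realization is a quasi-fibration in the sense of Dold--Thom with contractible fibers, hence a weak equivalence between CW-complexes, hence a homotopy equivalence. Your ``homeomorphism'' shortcut skips precisely this step.

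The construction half also has a hole, in the extension step. Your formula $(y,t)\mapsto(y,d_nt)$ lives at the $\Sigma_0$-level and so only covers pairs whose first coordinate comes from $\Sigma_0K$; but the non-degenerate simplices of $(\Sigma K)\wedge T$ include all pairs $(s_Ix,s_J\tau)$ with interleaved degeneracies --- the shuffles --- which are non-degenerate in the smash even though degenerate in the first factor. Lemma \ref{nondeg} (unique degeneracy decomposition) propagates a map from non-degenerate simplices of the \emph{source} to degenerate ones; it says nothing about these shuffles, and neither does functoriality: the canonical Kan-extension comparison runs $\iota_\sharp(A\wedge\iota^*B)\r(\iota_\sharp A)\wedge B$, the \emph{wrong} way to produce \rref{esusp0} from your $\Delta_0$-level map. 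The whole content of the construction is the choice of value on each shuffle and the verification of the simplicial identities for that choice --- which is exactly the paper's explicit recipe (identity on shuffles ending in the suspension coordinate, last degeneracy on the rest, checked natural on representables and extended by colimits). Your outline stops just short of where the work lies, so both halves need repair along the lines of the paper's argument.
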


\begin{proof}
Consider again the case when both $K=\Delta_{m+}$, $T=\Delta_{n+}$. The non-degenerate elements
in dimension $m+n$ of the simplicial set
\beg{esusp1}{\Delta_{m+}\wedge \Delta_{n+}=(\Delta_{m}\times \Delta_{n})_+}
correspond to shuffles of $m$ ordered elements and $n$ other ordered elements, i.e. their number is
$${m+n\choose m}.$$
After suspension, there will be the same number of nondegenerate elements in dimension $m+n+1$. 
In 
\beg{esusp2}{(\Sigma\Delta_{m+})\wedge \Delta_{n+},}
on the other hand, the non-degenerate elements in dimension $m+n+1$ correspond to
shuffles of $m+1$ ordered elements and $n$ ordered elements (the same as if we replaced
$\Sigma\Delta_{m+}$ by $(\Delta_{m+1})_+$, i.e. their number is
$${m+n+1\choose n}.$$
The morphism from \rref{esusp2} to the suspension of \rref{esusp1} is obtained by applying the last degeneracy
($s_{m+1}$) to all non-degenerate elements of dimension $m+n+1$ corresponding to the shuffles of the
$m+1$ and $n$ ordered elements where the last of the $m+1$ elements is not in the end. One verifies that 
this recipe is natural in the simplicial category. Additionally, by definition, after simplicial realization, the
map \rref{esusp0} becomes a quasi-fibration \cite{dt} with contractible fibers, so it is a weak equivalence between
CW-complexes, and hence a homotopy equivalence. 
\end{proof}

\vspace{3mm}
It is well known that if 
a functor $F_1:C\r D$ has a right adjoint $G_1$ and a functor $F_2:C\r D$ has a right adjoint $G_2$, and we have
a natural transformation $F_1\r F_2$, then we have a natural map of sets
$$D(F_2X,Y)\r D(F_1X,Y),$$
which is, by adjunction,
$$C(X,G_2Y)\r C(X, G_1Y)$$
which, by the Yoneda lemma, gives a canonical natural transformation
$$G_2\r G_1.$$
Applying this principle to the situation of Lemma \ref{lsusp}, the right adjoint to $F_1=(\Sigma ?)\wedge T$
is $G_1=\Omega F(T,?)$, and the right adjoint to $F_2=\Sigma(?\wedge T)$ is $G_2=F(T,\Omega (?))$, so we get a 
canonical natural transformation
\beg{enattr}{F(T,\Omega Y)\r \Omega F(T,Y)}
which, moreover, is injective, since it is right adjoint to a surjective map. 

\vspace{3mm}
This means that if $Z$ is a combinatorial prespectrum and $T$ is a based simplicial set, then we get canonical
morphisms
\beg{eftz}{F(T,Z_n)\r \Omega F(T,Z_{n+1}),}
and since $\Omega$ commutes with colimits of sequences,
a combinatorial prespectrum, denoted by $F_p(T,Z)$. We may of course go on to define
a spectrum $F(T,Z)$ by 
$$F(T,Z)=Sp(F_p(T,Z)),$$
i.e. by setting
\beg{eftzz}{F(T,Z)_n=\operatornamewithlimits{colim}_k \Omega^{k}F(T,Z_{n+k}).}
This seems particularly natural for a combinatorial spectrum $Z$, where one sees that the morphisms
\rref{eftz} are inclusions. 

It is important to note, however, that the functor $F_p(T,?):\mathscr{P}\r\mathscr{P}$ has a left adjoint 
$T\cdot ?$, while the functor
$F(T,?)$ does not. For a prespectrum $Z$ and a based simplicial set $T$, the based
simplicial set $(T\cdot Z)_n$ is the colimit of the diagram of based simplicial sets
\beg{edotcolim}{\diagram
&&\vdots\dto\\
&\Sigma(T\wedge \Sigma Z_{n-2})\dto\rto &\Sigma^2(T\wedge Z_{n-2})\\
T\wedge \Sigma Z_{n-1}\dto\rto &\Sigma(T\wedge Z_{n-1}) &\\
T\wedge Z_n &&
\enddiagram
}
An analogue of these constructions also exists when replacing $\mathscr{P}$ with $\mathscr{P}_0$. It is
interesting to note that in that case, the diagram \rref{edotcolim} is finite for each $n$, containing only the
terms involving $Z_{n-k}$ for $k\leq n$.

In Diagram \rref{edotcolim}, the horizontal morphisms are weak equivalences of simplicial sets, but unless
we know something about the vertical arrows, unfortunately this does not appear to imply anything about the
colimit of \rref{edotcolim}. On the other hand, if $Z$ is a 
combinatorial spectrum, the vertical arrows of \rref{edotcolim} are injective, so the inclusion of 
$T\wedge Z_n$ into the colimit is a weak equivalence.
Inspecting non-degenerate elements in \rref{edotcolim}, we obtain the following 

\begin{lemma}\label{ldotps}
If $Z$ is a combinatorial spectrum, then $?\star Z=Sp(?)\cdot Ps(Z)$ (where ``$?\cdot?$" was defined
under \rref{eftzz} above) preserves weak equivalences as well as injective 
morphisms
in the ``$?$" coordinate.
\end{lemma}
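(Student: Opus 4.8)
The plan is to analyze the construction through the defining colimit \rref{edotcolim} of the prespectrum $T\cdot Ps(Z)$ and then apply $Sp$, reducing everything to the bottom term $T\wedge Z_n$. Throughout I fix the combinatorial spectrum $Z$ and regard $T\mapsto T\star Z=Sp(T\cdot Ps(Z))$ as a functor of the based simplicial set $T$, where ``$\cdot$'' is the operation defined under \rref{eftzz}.

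First I would record the structural input that makes the spectrum case special: the adjoint structure maps $\bar\rho_n\colon\Sigma Z_n\r Z_{n+1}$ are inclusions. Indeed $\bar\rho_n=\epsilon_{Z_{n+1}}\circ\Sigma(\rho_n)$ with $\Sigma(\rho_n)$ an isomorphism (Proposition \ref{p2}), so it suffices to see that the counit $\epsilon\colon\Sigma\Omega W\r W$ is a monomorphism. By the analysis of non-degenerate elements in the proof of Proposition \ref{p0}, $\epsilon$ carries the non-degenerate elements of $\Sigma\Omega W$ bijectively onto the non-degenerate elements $x\in W$ with $d_{\mathrm{top}}(x)=*$; since a simplicial map that is injective on non-degenerate elements and sends them to non-degenerate elements is a monomorphism (by the unique degeneracy decomposition of simplices used in the proof of Proposition \ref{p0}), $\epsilon$, and hence each $\bar\rho_n$, is an inclusion. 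Smashing with a fixed based simplicial set and applying $\Sigma$ both preserve inclusions, so every vertical arrow of \rref{edotcolim} is an inclusion.

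Next, for the weak-equivalence statement, the heart is to show that the canonical inclusion $T\wedge Z_n\r(T\cdot Ps(Z))_n$ of the bottom term into the colimit \rref{edotcolim} is a weak equivalence. I would reorganize \rref{edotcolim} as a sequential colimit $P_0\r P_1\r\cdots$ with $P_0=T\wedge Z_n$, where $P_j$ is obtained from $P_{j-1}$ by the pushout along the $j$-th vertical (inclusion) map and the $j$-th horizontal map of Lemma \ref{lsusp}. At each stage the map $B_j\r P_{j-1}$ is a cofibration (it factors through the inclusion $B_j\hookrightarrow A_{j-1}$ and the inclusions built up so far), while the horizontal leg $B_j\r A_j$ is a weak equivalence (Lemma \ref{lsusp}); hence by left properness of simplicial sets $P_{j-1}\r P_j$ is a weak equivalence, and so is $P_0\r P_j$ for every $j$. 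The colimit is filtered, so—because a finite simplicial set has only finitely many non-degenerate simplices and therefore maps into the colimit through a finite stage—homotopy groups commute with it, giving $\pi_*|T\wedge Z_n|\cong\mathrm{colim}_j\pi_*|P_j|\cong\pi_*|(T\cdot Ps(Z))_n|$. Granting this, if $f\colon T_1\r T_2$ is a weak equivalence then $T_1\wedge Z_n\r T_2\wedge Z_n$ is one (on realizations it is $|f|\wedge|Z_n|$), so by the $2/3$ axiom $(T_1\cdot Ps(Z))_n\r(T_2\cdot Ps(Z))_n$ is a levelwise weak equivalence of prespectra; as the stable homotopy groups $\mathrm{colim}_n\pi_{*+n}|(\,\cdot\,)_n|$ are preserved by $Sp$, this yields a weak equivalence $T_1\star Z\r T_2\star Z$.

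For injectivity I would keep the same filtration $P_0\r P_1\r\cdots$, now viewed as a functor of $T$. Since smashing with a fixed based simplicial set, $\Sigma$ and $\Omega$ all preserve injections, each term of \rref{edotcolim} is injective in $T$; inspecting non-degenerate elements (so as to control the non-injective horizontal maps of Lemma \ref{lsusp} when passing through the pushouts) shows that each $P_j$ is injective in $T$, and then, because $(T\cdot Ps(Z))_n=\mathrm{colim}_jP_j$ is a filtered colimit and filtered colimits of sets preserve monomorphisms, $(?\cdot Ps(Z))_n$ is injective in $T$. Finally $Sp$ preserves injections, as $Sp(W)_n=\mathrm{colim}_k\Omega^kW_{n+k}$ is a filtered colimit of the inclusion-preserving functors $\Omega^k$; combining, $?\star Z$ preserves injective morphisms. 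The main obstacle I anticipate is the first part of the weak-equivalence argument: the horizontal maps of Lemma \ref{lsusp} are weak equivalences but \emph{not} cofibrations, so the sequential colimit is not taken along cofibrations and is not manifestly a homotopy colimit; the resolution is that one never needs it to be, because passing to homotopy groups already commutes with the filtered colimit by compactness of spheres while the gluing lemma controls each individual stage. The same subtlety—pushing injectivity through the non-injective horizontal maps—is exactly what forces the explicit inspection of non-degenerate elements in the injectivity step.
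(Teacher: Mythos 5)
Your proposal is correct and takes essentially the same route as the paper's own (very terse) proof: the paper likewise observes that $Z$ being a spectrum makes the vertical arrows of \rref{edotcolim} injective, concludes that the inclusion of $T\wedge Z_n$ into the colimit is a weak equivalence, and disposes of injectivity by ``inspecting non-degenerate elements,'' all of which you flesh out (counit injectivity via Proposition \ref{p0}, the pushout filtration with left properness and compactness). The only step you assert rather than prove --- that $Sp$ preserves the stable homotopy groups $\operatornamewithlimits{colim}_k\pi_{*+k}|(\,\cdot\,)_k|$ --- is also glossed over in the paper, and it does hold in the case at hand: the adjoint structure maps $\Sigma(T\cdot Ps(Z))_n\r (T\cdot Ps(Z))_{n+1}$ are injective (each is the cobase change of the injective map $T\wedge\bar{\rho}_n:T\wedge\Sigma Z_n\r T\wedge Z_{n+1}$, using your own first step), so levelwise realization yields an inclusion prespectrum, on which May spectrification $L$ --- which agrees with $\mathscr{L}\circ Sp$ by uniqueness of left adjoints, both being left adjoint to $Ps\circ Sing$ --- is a directed colimit with the expected homotopy groups.
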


\qed

\vspace{3mm}
\noindent
{\bf Remark:}
For a combinatorial spectrum $Z$ and a based simplicial set $K$, it is possible to describe $K\star Z$ explicitly
in terms of the cell decomposition of $Z$ given in the proof of Proposition \ref{pcell}. We define
$K\star Y_{(k)}$ by induction on $k$ so that for every non-degenerate cell 
\beg{estarell}{e:\Sigma^{\infty-n}\Delta_{\ell+}\r Y_{(k)}}
of degree $\ell\leq k$, we have a morphism
\beg{estarellk}{K\star e:\Sigma^{\infty-n}K\wedge \Delta_{\ell+}\r K\star Y_{(k)}}
such that for an iterated face $d:\Delta_m\r\Delta_\ell$, if we denote by $e^\prime$ 
the non-degenerate cell of $Y_{(k)}$ of degree $p\leq m$,
we have a commutative diagram
\beg{estarcons}{\diagram
\Sigma^{\infty-n}\Sigma^{m-p}K\wedge \Delta_{p+}\rto^{K\star e^\prime} & Y_{(k)}\\
\Sigma^{\infty-n}K\wedge \Sigma^{m-p}\Delta_{p+}\uto &\\
\Sigma^{\infty-n} K\wedge \Delta_{m+}\uto\rto_{\Sigma^{\infty-n}K\wedge d} &\Sigma^{\infty-n}
K\wedge\Delta_{\ell+}\uuto_{K\star e}
\enddiagram
}
where the left column is projection followed by an iteration of the morphism \rref{esusp0} of Lemma \ref{lsusp}.
We may start with $k=-1$, $Y_{(-1)}=*$. We have 
$K\star Y_{(-1)}=*$. Assuming $K\star Y_{(k)}$ has been defined, and $e$ is a cell \rref{estarell} of 
$Y_{(k+1)}$ of degree $\ell=k+1$, we have an attaching map
\beg{estaratt}{\Sigma^{\infty-n}K\wedge \Delta_{\ell+}^\circ\r K\star Y_{(k)}}
given by using the map \rref{estarellk} with $e$ replaced by the faces of $e$, which is
consistent because of Diagram \rref{estarcons}.
Thus, we may push out \rref{estaratt} with the inclusion
$$\Sigma^{\infty-n}K\wedge \Delta_{\ell+}^\circ\r\Sigma^{\infty-n}K\wedge \Delta_{\ell+}.$$
Note that this description can also be taken as a definition of $K\star Z$. Functoriality follows by applying
iterations of the morphisms \rref{esusp0}, since morphisms of combinatorial spectra do not increase the
degree of cells. Immediately from this construction, there follows 

\begin{lemma}\label{tdotps1}
We have a natural (in all coordinates) isomorphism of spectra
\beg{ecanstar}{(T_1\wedge T_2)\star Z\cong T_1\star(T_2\star Z)
}
\end{lemma}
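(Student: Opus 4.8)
The plan is to read the isomorphism off the explicit cell-by-cell construction of $K\star Z$ given in the preceding Remark, which we treat as the definition of $\star$. Since that construction proceeds along the cell filtration $Y_{(m)}$ of $Z$ furnished by Proposition \ref{pcell} and commutes with the defining directed colimit $Y=\lim_\r Y_{(m)}$, the functor $T\star(-)$ preserves directed colimits of inclusions of finite cell subspectra; and every combinatorial spectrum is such a directed colimit of shift desuspensions $\Sigma^{\infty-n}W$ of based simplicial sets $W$. Both $(T_1\wedge T_2)\star(-)$ and $T_1\star(T_2\star(-))$ therefore preserve these colimits, so it suffices to construct a natural isomorphism when $Z=\Sigma^{\infty-n}W$ and then assemble the cell-level isomorphisms over the colimit. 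After re-indexing cells as permitted by the remark following Proposition \ref{pcell}, the transition maps of the colimit are shift desuspensions of inclusions $W\hookrightarrow W'$, so naturality of the shift-desuspension isomorphism in $W$ is all that is needed to glue.

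First I would isolate the sub-lemma that for based simplicial sets $K,W$ there is a natural isomorphism $K\star\Sigma^{\infty-n}W\cong\Sigma^{\infty-n}(K\wedge W)$. This follows from the construction: the cells of $\Sigma^{\infty-n}W$ are the non-degenerate simplices of $W$, the map \rref{estarellk} replaces the cell attached to an $\ell$-simplex by $\Sigma^{\infty-n}K\wedge\Delta_{\ell+}$, and these assemble, via the pushouts \rref{estaratt}, to $\Sigma^{\infty-n}(K\wedge W)$ because $K\wedge(-)$ and $\Sigma^{\infty-n}$ commute with the simplicial colimits building $W$. Granting the sub-lemma and applying it twice, $(T_1\wedge T_2)\star\Sigma^{\infty-n}W\cong\Sigma^{\infty-n}((T_1\wedge T_2)\wedge W)$ while $T_1\star(T_2\star\Sigma^{\infty-n}W)\cong T_1\star\Sigma^{\infty-n}(T_2\wedge W)\cong\Sigma^{\infty-n}(T_1\wedge(T_2\wedge W))$, and the required isomorphism is then induced by the associativity isomorphism $(T_1\wedge T_2)\wedge W\cong T_1\wedge(T_2\wedge W)$ of the smash product of based simplicial sets.

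The hard part will be the bookkeeping of the comparison maps \rref{esusp0} of Lemma \ref{lsusp} that enter the attaching data through Diagram \rref{estarcons}: whenever a face lowers the degree of a cell, the construction reconciles the two sides using iterations of \rref{esusp0}, and these twists enter differently on the left (one instance associated to $T_1\wedge T_2$) and on the right (a composite of a $T_2$-level and a $T_1$-level instance). The cell-level isomorphisms therefore glue across the pushouts only if \rref{esusp0} is coherent with the associator of $\wedge$, i.e. the two resulting comparisons of $(\Sigma(T_1\wedge T_2))\wedge W$ with $\Sigma(T_1\wedge(T_2\wedge W))$ agree. By naturality this reduces to the representable case $T_i=\Delta_{m_i+}$, $W=\Delta_{\ell+}$, where \rref{esusp0} is the explicit ``last degeneracy on shuffles'' map from the proof of Lemma \ref{lsusp}; there the coherence is a direct combinatorial verification that applying the last degeneracy commutes with reassociating the three blocks of ordered elements. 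Once this coherence is established the cell-level isomorphisms are compatible with all attaching maps and with the colimit structure, and the reduction of the first paragraph yields the natural isomorphism \rref{ecanstar}.
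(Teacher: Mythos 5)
Your overall strategy --- reading the isomorphism off the explicit construction in the Remark and reducing the remaining coherence of the maps \rref{esusp0} with the associator of $\wedge$ to the representable case --- is in the spirit of the paper, which indeed deduces the lemma directly from the inductive, degree-indexed construction of $K\star Y_{(k)}$ via \rref{estarellk} and the compatibility diagrams \rref{estarcons}. However, your reduction rests on a sub-lemma that is false: it is not true that $K\star\Sigma^{\infty-n}W\cong\Sigma^{\infty-n}(K\wedge W)$. In fact the right-hand side is not even well defined as a function of the spectrum $\Sigma^{\infty-n}W$, since the presentation of a combinatorial spectrum as a shift desuspension is highly non-unique, while $K\star(-)$ is a functor of the spectrum alone. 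Concretely, $\Sigma^{\infty}S^1=\Sigma^{\infty}\Sigma S^0=\Sigma^{\infty+1}S^0$ (Kan's suspension is exactly a shift on the stable level, by Proposition \ref{p0} and $\Omega^{\ell}\Omega^{\infty+k}=\Omega^{\infty+k+\ell}$), and $\Sigma^{\infty+1}S^0$ consists of a single cell $\Sigma^{\infty+1}\Delta_{0+}$, so the construction gives $S^1\star\Sigma^{\infty+1}S^0=\Sigma^{\infty+1}S^1=\Sigma^{\infty+2}S^0$, a spectrum with exactly one non-degenerate element; your formula applied to the presentation $W=S^1$, $n=0$ of the same spectrum would instead give $\Sigma^{\infty}(S^1\wedge S^1)$, which has three non-degenerate elements (the diagonal edge and the two $2$-simplices of $\Delta_1\times\Delta_1$ survive the collapse). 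By Lemma \ref{nondeg}, an isomorphism of combinatorial spectra induces a bijection on non-degenerate elements in each dimension, so these spectra are not isomorphic; the same failure occurs for $K=\Delta_{1+}$ ($3$ versus $5$ non-degenerate elements).

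The source of the error is your claim that ``the cells of $\Sigma^{\infty-n}W$ are the non-degenerate simplices of $W$,'' attached along $\Delta_{\ell+}$ for an $\ell$-simplex. In the cell decomposition of Proposition \ref{pcell}, the simplex shape of the cell carried by a non-degenerate element is its \emph{degree} (the minimal $k$ with $d_i(x)=*$ for all $i>k$), not its simplicial dimension, and the desuspension index varies from cell to cell. Precisely when the degree of a face drops below its dimension, the comparison maps \rref{esusp0} --- surjective weak equivalences, but not isomorphisms --- enter the attaching data through \rref{estarcons}; this is exactly the phenomenon the $\star$ construction is designed to track, and the formula $\Sigma^{\infty-n}(K\wedge W)$ erases it. So your first two paragraphs do not go through, and the coherence issue you correctly isolate in the third paragraph is not the only obstacle. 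The paper's argument never collapses $K\star Z$ to a suspension spectrum of a smash product: it compares the two inductive constructions $(T_1\wedge T_2)\star(-)$ and $T_1\star(T_2\star(-))$ cell by cell and degree by degree, identifying the attaching data via associativity of $\wedge$ and the naturality of the iterates of \rref{esusp0}, with the shifted cells $\Sigma^{\infty-n}\Sigma^{m-p}K\wedge\Delta_{p+}$ of \rref{estarcons} rather than your false sub-lemma as the basic building blocks.
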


\qed

\vspace{3mm}

Note that $S^0=*_+$ (i.e. the simplicial set of two points with one of them as base point)
is a left unit for the operation $?\cdot?$ (see the paragraph under \rref{eftzz} above), hence also for $\star$.
Now denoting $I=\Delta_1$, a {\em homotopy} $h:f\simeq g$ two morphisms of combinatorial spectra
\beg{efgh}{f,g:X\r Y}
is a morphism
\beg{efghi}{h:I_+\star X\r Y}
such that
$hd_0=f$, $hd_1=g$. The equivalence relation of homotopy $\simeq$ on $\mathscr{S}(X,Y)$ is defined
as the smallest
equivalence relation containing the relation of existence of a homotopy $h:f\simeq g$. By functoriality of $\star$, this is
a congruence relation on the category of combinatorial spectra, and the corresponding quotient category
is denoted by $h\mathscr{S}$, and called the {\em (strong) homotopy category} of combinatorial spectra. An 
isomorphism in $h\mathscr{S}$ is called a {\em homotopy equivalence}. 

\vspace{3mm}
\begin{lemma}\label{shomotopy}
The inclusions
$$d_i:X=S^0\star X\r I_+\star X$$
are injective morphisms, and weak equivalences. A homotopy equivalence of combinatorial
spectra is a weak equivalence.
\end{lemma}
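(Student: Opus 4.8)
The plan is to reduce both assertions about the $d_i$ to Lemma~\ref{ldotps}, and then to bootstrap from there to the statement about homotopy equivalences using only formal properties of equivalences in the homotopy category of May spectra. Under the unit identification $S^0\star X\cong X$, the map $d_i$ is exactly $d_i\star X$, where $d_i:S^0=(\Delta_0)_+\r(\Delta_1)_+=I_+$ is the $i$th vertex inclusion of based simplicial sets. I would first observe that each such $d_i$ is injective and a weak equivalence of based simplicial sets: after simplicial realization it is, on the non-basepoint component, the inclusion of an endpoint of $[0,1]$, which is a homotopy equivalence since $[0,1]$ is contractible (and it is the identity on the disjoint basepoint). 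Applying Lemma~\ref{ldotps} in the ``$?$'' variable with $Z=X$ then gives at once that $d_i=d_i\star X$ is injective and a weak equivalence of combinatorial spectra, which settles the first sentence for both $i=0,1$.

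For the second sentence, the key point is that homotopic maps become equal after applying $\mathscr{L}$. First I would record a common retraction: the collapse $\pi:\Delta_1\r\Delta_0$ induces $\pi_+:I_+\r S^0$ with $\pi_+\circ d_i=\mathrm{id}_{S^0}$, so $p:=\pi_+\star X:I_+\star X\r X$ satisfies $p\circ d_i=\mathrm{id}_X$ for $i=0,1$ by functoriality of $\star$ in its simplicial-set variable. Passing to the homotopy category of May spectra via $\mathscr{L}$, and using that equivalences of May spectra are precisely its isomorphisms: since $d_0$ is a weak equivalence, $\mathscr{L}(d_0)$ is invertible there, and $\mathscr{L}(p)\mathscr{L}(d_0)=\mathrm{id}$ forces $\mathscr{L}(p)$ to be invertible with inverse $\mathscr{L}(d_0)$. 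As $\mathscr{L}(p)\mathscr{L}(d_1)=\mathrm{id}$ as well, cancelling the isomorphism $\mathscr{L}(p)$ yields $\mathscr{L}(d_0)=\mathscr{L}(d_1)$.

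Now for any homotopy $h:I_+\star X\r Y$ with $hd_0=f$ and $hd_1=g$, functoriality and the previous step give $\mathscr{L}(f)=\mathscr{L}(h)\mathscr{L}(d_0)=\mathscr{L}(h)\mathscr{L}(d_1)=\mathscr{L}(g)$ in the homotopy category; so directly homotopic maps agree after $\mathscr{L}$, and since this is an equality of morphisms it propagates along the equivalence relation generated by homotopy. Applying this to a homotopy equivalence $f:X\r Y$ with homotopy inverse $g$, the relations $gf\simeq\mathrm{id}_X$ and $fg\simeq\mathrm{id}_Y$ become $\mathscr{L}(g)\mathscr{L}(f)=\mathrm{id}$ and $\mathscr{L}(f)\mathscr{L}(g)=\mathrm{id}$, so $\mathscr{L}(f)$ is an isomorphism in the homotopy category, i.e.\ an equivalence of May spectra --- which is exactly the assertion that $f$ is a weak equivalence.

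The substantive input here is Lemma~\ref{ldotps}, which is already available; granting it, the first two assertions are essentially immediate. I therefore expect the only delicate points to be bookkeeping rather than ideas: correctly identifying $d_i$ with $d_i\star X$, verifying functoriality of $\star$ in its simplicial-set coordinate so that the retraction identity $p\circ d_i=\mathrm{id}_X$ holds on the nose at the level of spectra, and being careful to derive $\mathscr{L}(d_0)=\mathscr{L}(d_1)$ purely by cancellation of isomorphisms in the May homotopy category rather than by trying to compare or lift homotopies directly.
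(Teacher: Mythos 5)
Your proposal is correct and takes essentially the same route as the paper: the paper's (much terser) proof likewise deduces the first statement from the fact that $d_i:S^0\rightarrow I_+$ are injective weak equivalences of based simplicial sets, propagated through $?\star X$ via Lemma \ref{ldotps}, and asserts that the second statement ``clearly follows from the first.'' Your retraction-and-cancellation argument in the stable homotopy category of May spectra is precisely the standard way of filling in that last step, so you have only made explicit what the paper leaves implicit.
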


\begin{proof}
The second statement clearly follows from the first. The first statement follows from the fact that $d_i:S^0\r I_+$
are injective morphisms and weak equivalences.
\end{proof}

A morphism $f:X\r Y$ of combinatorial spectra is called a {\em Kan fibration} if in the following commutative
diagram, for any choice of horizontal arrows, the diagonal arrow exists:
\beg{etestfib}{\diagram
\Sigma^{\infty+\ell}(V_{n,k})_+\rto\dto_\subset &
X\dto^f\\
\Sigma^{\infty+\ell}\Delta_{n+}\rto\urdotted|>\tip &Y.
\enddiagram
}
One has a similar lifting
whenever the left vertical arrow is replaced by any anodyne extension. A combinatorial
spectrum $X$ is called {\em Kan fibrant} if the terminal morphism $X\r *$ is a Kan fibration. This is, by adjunction,
equivalent to $X_n$ being a Kan fibrant simplicial set for every $n\in \Z$.

The right adjoint to $\mathscr{L}$ is the functor $Sing$
which takes a May spectrum $T=(T_n)$ to a combinatorial spectrum $Z$ where $Z_n=Sing(T_n)$.
(Note that $\Omega$ commutes with $Sing$, since their left adjoints $\Sigma$ and $|?|$ commute.)
We shall write 
\beg{ezsing}{Z=Sing(T).}

\begin{lemma}\label{lsingsp}
(1) A combinatorial spectrum of the form \rref{ezsing} for a May spectrum $T$ is Kan fibrant. 

(2) For any combinatorial spectrum $Z$, the unit of adjunction
\beg{ezsing1}{\eta: Z\r Sing(\mathscr{L}(Z))
}
is a weak equivalence. 
\end{lemma}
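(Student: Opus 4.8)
The plan is to treat the two parts separately: part (1) by the fibrancy criterion recorded just above the statement, and part (2) by the triangle identities of the adjunction $\mathscr{L}\dashv Sing$ together with the definition of weak equivalence.

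For (1), recall that a combinatorial spectrum $X$ is Kan fibrant if and only if each $X_n$ is a Kan fibrant simplicial set. Writing $Z=Sing(T)$ with $Z_n=Sing(T_n)$, it therefore suffices to observe that the singular complex $Sing(T_n)$ of a based space is always a Kan complex. This is classical: a horn $V_{n,k}\to Sing(T_n)$ is adjoint to a map $|V_{n,k}|\to T_n$, and since the inclusion $|V_{n,k}|\hookrightarrow|\Delta_n|$ admits a retraction of spaces, composing with such a retraction extends the map over $|\Delta_n|$, whose adjoint fills the horn. Hence every $Z_n$ is Kan fibrant, and so is $Z$.

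For (2), by the definition of weak equivalence recalled before the lemma, it is enough to show that $\mathscr{L}(\eta)$ is an equivalence of May spectra. I would route this through the counit. Let $\epsilon:\mathscr{L}\,Sing\to\mathrm{id}$ be the counit of the adjunction. The triangle identity gives
$$\epsilon_{\mathscr{L}(Z)}\circ\mathscr{L}(\eta_Z)=\mathrm{id}_{\mathscr{L}(Z)}.$$
Thus $\mathscr{L}(\eta_Z)$ is a section of $\epsilon_{\mathscr{L}(Z)}$, and since equivalences of May spectra satisfy the $2/3$ axiom, it suffices to prove that $\epsilon_W:\mathscr{L}(Sing(W))\to W$ is an equivalence for every May spectrum $W$; taking $W=\mathscr{L}(Z)$ then gives the result. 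To see this, note that by Proposition \ref{p2} the combinatorial spectrum $Sing(W)$ has levels $Sing(W_n)$, so by Corollary \ref{c1} the spectrum $\mathscr{L}(Sing(W))$ is the spectrification of the inclusion prespectrum $(|Sing(W_n)|)_n$, and $\epsilon_W$ is the spectrification of the levelwise map whose $n$-th component is the counit $|Sing(W_n)|\to W_n$ of the realization--singular adjunction on based spaces. Each of these is a weak homotopy equivalence. Since the stable homotopy groups of a spectrification of an inclusion prespectrum $(E_n)$ are $\operatornamewithlimits{colim}_n\pi_{*+n}(E_n)$, the map $\epsilon_W$ induces the colimit of the isomorphisms $\pi_{*+n}(|Sing(W_n)|)\xrightarrow{\cong}\pi_{*+n}(W_n)$ and is therefore a $\pi_*$-isomorphism, i.e. an equivalence of May spectra.

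The main obstacle is the bookkeeping in the last step: one must identify the counit $\epsilon_W$ concretely as the spectrification of the levelwise realization--singular counits, and justify that stable homotopy groups of $\mathscr{L}$ applied to an inclusion prespectrum are read off as the colimit of the levelwise homotopy groups, so that the levelwise weak equivalences assemble into a $\pi_*$-isomorphism after spectrification. Everything else is either formal (the triangle identity and the $2/3$ axiom) or classical (that $Sing(Y)$ is a Kan complex and that $|Sing(Y)|\to Y$ is a weak equivalence).
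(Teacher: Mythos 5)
Your proposal is correct and follows essentially the same route as the paper: part (1) reduces to the classical fact that each $Sing(T_n)$ is a Kan complex, and part (2) uses the triangle identity to reduce to showing the counit $\epsilon:\mathscr{L}Sing(T)\r T$ is an equivalence for every May spectrum $T$, which follows because $\mathscr{L}Sing(T)$ is by definition the spectrification of the inclusion prespectrum with terms $|Sing(T_n)|$ and the space-level counits $|Sing(T_n)|\r T_n$ are weak equivalences. Your extra bookkeeping (the explicit triangle identity, the $2/3$ axiom, and the colimit formula for homotopy groups of a spectrified inclusion prespectrum) merely spells out steps the paper leaves implicit.
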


\begin{proof}
For (1), it suffices to show that the based simplicial set $\Omega^{\infty-n}Sing(T)=Sing(T_n)$ are
Kan fibrant, which is well known. 

For (2), since weak equivalences are defined by applying $\mathscr{L}$, it suffices to show that
the canonical morphism
$$\mathscr{L} Sing\mathscr{L}(Z)\r \mathscr{L}(Z)$$
is a weak equivalence. More generally, we claim that the counit of adjunction
$$\epsilon:\mathscr{L}Sing (T)\r T$$
is a weak equivalence for any May spectrum $T$. But $\mathscr{L}Sing(T)$ is the spectrification
of the inclusion prespectra whose terms are $|Sing(T_n)|$, so the statement follows from the fact
that the counit of adjunction
$$|Sing(T_n)|\r T_n$$
is a weak equivalence.
\end{proof}

\vspace{3mm}
\begin{proposition}\label{panod}
For any injective morphism of combinatorial spectra $f:X\r Y$ which is a weak equivalence, there
exists a (necessarily injective) morphism of combinatorial spectra $g:Y\r Z$ such that $g\circ f$ is
an anodyne extension.
\end{proposition}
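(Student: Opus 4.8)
The plan is to use the standard (anodyne extension, Kan fibration) factorization followed by the retract argument, rather than trying to massage the cell structure of $f$ by hand. First I would apply the small object argument to the generating anodyne maps $\Sigma^{\infty+\ell}(V_{n,k})_+\r\Sigma^{\infty+\ell}\Delta_{n+}$ in order to factor
$$X\overset{i}{\r} W\overset{p}{\r} Y,\qquad p\circ i=f,$$
with $i$ an anodyne extension and $p$ a Kan fibration (i.e. having the lifting property \rref{etestfib}). The small object argument is legitimate here because each domain $\Sigma^{\infty+\ell}(V_{n,k})_+$ is a finite cell spectrum, hence compact with respect to the filtered colimits along injections out of which, by the discussion following Proposition \ref{pcell}, every combinatorial spectrum is assembled. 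Consequently the construction terminates after $\omega$ stages and realizes $i$ as a countable sequence of pushouts of coproducts of generating anodyne maps, that is, as an anodyne extension in the precise sense defined above.

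Next I would observe that $p$ is automatically a weak equivalence: anodyne extensions are weak equivalences, so $i$ is one; $f$ is one by hypothesis; and weak equivalences satisfy the $2/3$ property, being created by the functor $\mathscr{L}$ from equivalences of May spectra. Hence from $f=p\circ i$ it follows that $p$ is a Kan fibration which is simultaneously a weak equivalence, i.e. a trivial fibration. Granting that such $p$ lifts against the injection $f$, the retract argument then finishes everything: in the commuting square with top $i\colon X\r W$, left $f\colon X\r Y$, bottom $\mathrm{id}_Y$ and right $p$, a diagonal filler $g\colon Y\r W$ satisfies $g\circ f=i$ (an anodyne extension) and $p\circ g=\mathrm{id}_Y$. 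The latter exhibits $g$ as a section of $p$, so $g$ is injective, which matches the parenthetical assertion of the statement; setting $Z=W$ completes the construction.

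The hard part will be producing that diagonal filler, that is, proving that a trivial fibration $p$ has the right lifting property against the cofibration $f$. By Proposition \ref{pcell} the map $f$ is a relative cell spectrum built from boundary inclusions $\Sigma^{\infty+\ell}\Delta_n^\circ\r\Sigma^{\infty+\ell}\Delta_{n+}$, and since lifting data extend along pushouts and sequential colimits, it suffices to lift $p$ against these boundary inclusions. This is the combinatorial-spectrum analogue of the classical theorem that trivial Kan fibrations of simplicial sets are exactly the maps with the right lifting property against $\Delta_n^\circ\r\Delta_{n+}$, and it is the genuine obstacle; the factorization and the retract manipulations are purely formal.

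To overcome it I would adapt Kan's simplicial minimal-fibration argument to combinatorial spectra, using that Kan fibrancy is detected levelwise (by the remark preceding the proposition, via the adjunction) so that the simplicial constructions can be organized level by level through the description of Proposition \ref{p2}. The outline is the familiar one: every Kan fibration admits a minimal fibration as a fiberwise strong deformation retract; a minimal fibration that is a weak equivalence is forced to be an isomorphism, since its fibres are weakly contractible minimal objects and hence trivial; and the fibrewise deformation retraction data together with this isomorphism supply the required fillers against the boundary inclusions. Care is needed because the weak equivalences here are the stable ones defined through $\mathscr{L}$ rather than levelwise equivalences, so the contractibility of fibres and the rigidity of minimal objects must be argued at the level of the associated May spectra; this is where the real work of the proof concentrates.
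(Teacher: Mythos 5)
Your overall skeleton --- factor $f=p\circ i$ by the small object argument applied to the generating anodyne maps, conclude by the 2/3 property that $p$ is a Kan fibration and weak equivalence, then lift $\mathrm{id}_Y$ through the square and run the retract argument --- is formally sound, and your reduction of the lifting problem to the boundary inclusions $\Sigma^{\infty+\ell}\Delta_{n+}^{\circ}\r\Sigma^{\infty+\ell}\Delta_{n+}$ via Proposition \ref{pcell} is correct. But the proof stands or falls with the assertion that a Kan fibration which is a weak equivalence lifts against these boundary inclusions, and your sketch of that assertion has a genuine gap. By the shift adjunctions, the lifting property you need says precisely that every level $p_n$ is a trivial Kan fibration of simplicial sets, so in particular that $p$ is a \emph{levelwise} weak equivalence --- whereas your hypothesis is only a \emph{stable} equivalence, created by $\mathscr{L}$. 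Your outline extracts ``weakly contractible fibres'' from stable contractibility, but that argument only reaches the fibre over the basepoint, where the strict identifications $F_n\cong\Omega F_{n+1}$, combined with the fact that $|\Omega T|\simeq\Omega|T|$ for Kan fibrant $T$, convert stable triviality into levelwise contractibility. The fibres of $p_n$ over the other vertices of the (possibly non-fibrant) base, and the surjectivity of $p_n$ onto components not containing the basepoint, are not controlled by any stable datum as such; bridging this requires exploiting the spectrum structure --- for instance, that a vertex of $B_n$ is an edge of $B_{n+1}$ with an endpoint at the basepoint, so fibres over arbitrary vertices get chained to basepoint fibres one level up --- and none of this appears in your sketch. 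Separately, a minimal-fibration theory for combinatorial spectra cannot simply be quoted: the classical construction of a minimal sub-fibration involves arbitrary choices, and making them compatible with the isomorphisms $Z_n\cong\Omega Z_{n+1}$ is itself a new problem. (Once the fibre/surjectivity point is settled, you could avoid minimality entirely and invoke the classical simplicial-set theorem one level at a time; your $\omega$-compactness claim for the small object argument, by contrast, is fine.)

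For comparison, the paper's proof avoids the trivial-fibration characterization altogether: it runs a transfinite induction over a cell structure of $Y$ relative to $X$ (Proposition \ref{pcell}), at each stage making the partial target Kan fibrant by the small object argument (an anodyne extension), using fibrancy to turn extendability of the next attaching map into a stable-homotopy condition supplied by the hypothesis on $f$ (``the cell was already there''), and then attaching $\Sigma^{\infty-\ell}(\Delta_n\times I)_+$ along $\Sigma^{\infty-\ell}(\Delta_n^{\circ}\times I\cup\Delta_n\times\{0\})_+$, which is anodyne and swallows the cell. That route needs the stable-versus-levelwise principle only for a single fibrant object, not in the fibrewise form over an arbitrary base that your route demands; so while your approach could plausibly be completed, as written its central lemma is unproved and the sketch offered for it would fail exactly at the stable-versus-levelwise step.
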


\begin{proof}
By Proposition \ref{pcell}, $Y$ may be expressed as
$Y_\alpha$ for some ordinal $\alpha$ where we let $Y_0=X$, for a limit ordinal $\beta$, we put
$$Y_\beta=\bigcup_{\gamma<\beta} Y_\gamma,$$
and for every $\beta<\alpha$, $X_{\beta+1}$ is obtained from $X_\beta$ by attaching a cell of the form
$\Sigma^{\infty-\ell}\Delta_{n+}$ by $\Sigma^{\infty-\ell}\Delta_{n+}^\circ$. We will construct, by induction,
inclusions of spectra
\beg{eanodind}{Y_\beta\subseteq X_\beta
}
such that the inclusion $X\subseteq X_\beta$ is an anodyne extension. We just take unions at limit ordinals,
and $X_0=X$, so it suffices to show how to construct $X_{\beta+1}$ from $X_\beta$. 

To this end, first, note that we can assume that $X_\beta$ is Kan fibrant by the ``small object argument" 
(attaching all possible $\Sigma^{\infty-\ell}\Delta_{n+}$'s via different $\Sigma^{\infty-\ell}V_{n,k+}$'s in 
$\omega$ steps). Now consider the attaching map 
$$f:\Sigma^{\infty-\ell}\Delta_{n+}^\circ\r X,$$
and its adjoint
$$\phi:\Delta_{n+}^\circ\r X_\ell.$$
Since $(X_\beta)_\ell$ is a Kan fibrant simplicial set, and 
$$X_\ell\subseteq Y_\ell$$
is a weak equivalence,
$\phi$ extends to a morphism of based simplicial sets
$$\Delta_{n+}\r (X_\beta)_\ell,$$
and hence $f$ extends to a morphism of Kan spectra
$$\Sigma^{\infty-\ell}\Delta_{n+}\r X_\beta.$$
In other words, ``the cell we are trying to attach was already in $X_\beta$''. Thus, instead of the cell, we can
attach 
$$\Sigma^{\infty-\ell}(\Delta_n\times I)_+$$
via 
$$\Sigma^{\infty-\ell}(\Delta_n^\circ\times I\cup \Delta_n\times \{0\})_+,$$
which is an anodyne extension.
\end{proof}

\begin{theorem}\label{tcw}
Kan fibrant combinatorial spectra are local with respect to weak equivalences in $h\mathscr{S}$. In other
words, if we write $[X,Y]=h\mathscr{S}(X,Y)$, then for a weak equivalence $e:X\r Y$ and a
Kan fibrant spectrum $Z$,
\beg{elocale}{[e,Z]:[Y,Z]\r[X,Z]
}
is a bijection. 
\end{theorem}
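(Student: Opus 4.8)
The plan is to isolate the case where the weak equivalence is an anodyne extension, where Kan fibrancy supplies the needed lifts directly, and then to transport bijectivity back to an arbitrary weak equivalence by combining the mapping cylinder, Proposition \ref{panod}, and the $2$-out-of-$3$ property of weak equivalences (which is available because weak equivalences are precisely the maps sent to equivalences by the functor $\mathscr{L}$, and equivalences of May spectra satisfy $2$-out-of-$3$). Throughout I write $[X,Y]=h\mathscr{S}(X,Y)$ and I will repeatedly use that homotopy equivalences are isomorphisms in $h\mathscr{S}$, so they induce bijections on $[-,Z]$ for every $Z$, while by Lemma \ref{shomotopy} they are also weak equivalences.

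I would first establish the anodyne case: if $a\colon X\r W$ is an anodyne extension and $Z$ is Kan fibrant, then $[a,Z]\colon[W,Z]\r[X,Z]$ is a bijection. Surjectivity is immediate, since for $f\colon X\r Z$ the extended lifting property of $Z$ against anodyne extensions (discussed after \ref{etestfib}) yields $\tilde f\colon W\r Z$ with $\tilde f a=f$. For injectivity, suppose $f_0,f_1\colon W\r Z$ and a homotopy $h\colon I_+\star X\r Z$ with $hd_0=f_0a$, $hd_1=f_1a$; here I use the standard fact that homotopy into a Kan fibrant object is already an equivalence relation, so that $f_0a\simeq f_1a$ is witnessed by a single $h$. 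I then form the spectrum-level pushout-product of the boundary inclusion $j\colon(\partial I)_+\hookrightarrow I_+$ with $a$,
\[
P:=(I_+\star X)\cup_{(\partial I)_+\star X}\bigl((\partial I)_+\star W\bigr)\longrightarrow I_+\star W,
\]
which makes sense because $\star$ preserves injections in the spectrum variable (Lemma \ref{ldotps}). The maps $h$ and $f_0\vee f_1$ agree on the overlap and so define a map $P\r Z$; if the displayed map is an anodyne extension, Kan fibrancy extends it to $H\colon I_+\star W\r Z$, giving a homotopy $f_0\simeq f_1$.

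The technical heart, which I expect to be the main obstacle, is showing that $P\r I_+\star W$ is an anodyne extension. My plan here is to avoid abstract cocontinuity of $\star$ and instead use its cellular description (Lemma \ref{tdotps1} and the Remark preceding it). Writing $a$ as a transfinite composite of pushouts of coproducts of $\Sigma^{\infty+\ell}V_{n,k+}\hookrightarrow\Sigma^{\infty+\ell}\Delta_{n+}$, the functor $K\star(-)$ turns the attachment of such a cell into the attachment of $\Sigma^{\infty+\ell}(K\wedge(V_{n,k})_+)\hookrightarrow\Sigma^{\infty+\ell}(K\wedge\Delta_{n+})$, compatibly with pushouts and filtered colimits. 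Hence the pushout-product reduces, one cell at a time, to the shift-desuspension of the based pushout-product of $j$ with the horn inclusion $(V_{n,k})_+\hookrightarrow\Delta_{n+}$, which (since $I_+\wedge(-)_+=(I\times -)_+$) is the shift-desuspension, with a disjoint basepoint adjoined, of the simplicial cartesian pushout-product
\[
(I\times V_{n,k})\cup_{\partial I\times V_{n,k}}(\partial I\times\Delta_n)\longrightarrow I\times\Delta_n.
\]
This is the classical statement that the pushout-product of the cofibration $\partial\Delta^1\hookrightarrow\Delta^1$ with a horn inclusion is anodyne; since anodyne extensions of simplicial sets transport to anodyne extensions of combinatorial spectra under shift-desuspension, and anodyne extensions are closed under cobase change and transfinite composition, $P\r I_+\star W$ is anodyne. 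The delicate point is matching the cell structure of $K\star W$ produced by the Remark with these simplicial pushout-products.

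Finally I would reduce the general statement and conclude. For a weak equivalence $e\colon X\r Y$, form the mapping cylinder $M_e=Y\cup_X(I_+\star X)$ glued along $e$ and $d_1$; the collapse $r\colon M_e\r Y$ is a homotopy equivalence, hence induces a bijection on $[-,Z]$ and is a weak equivalence, and $X\r M_e$ is injective, so by $2$-out-of-$3$ it is an injective weak equivalence. Since $[e,Z]=[X\r M_e,Z]\circ[r,Z]$ with $[r,Z]$ bijective, it suffices to treat an injective weak equivalence $e\colon X\r Y$. For such an $e$, Proposition \ref{panod} produces an injective $g\colon Y\r W$ with $ge$ anodyne; applying the anodyne case to $ge$ makes $[ge,Z]=[e,Z]\circ[g,Z]$ a bijection, so $[e,Z]$ is surjective and $[g,Z]$ is injective. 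As $g$ is again an injective weak equivalence (by $2$-out-of-$3$), the same surjectivity argument applied to $g$ shows $[g,Z]$ is surjective; therefore $[g,Z]$ is bijective, and $[e,Z]=[ge,Z]\circ[g,Z]^{-1}$ is a bijection, as required.
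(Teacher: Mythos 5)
Your overall architecture is sound, and its surjectivity half is essentially the paper's: replace $e$ by the mapping cylinder inclusion to make it injective, invoke Proposition \ref{panod} to embed it in an anodyne extension, and use the lifting property of the Kan fibrant $Z$ against anodyne extensions. For injectivity, however, you take a genuinely different route. The paper never proves (or needs) injectivity of $[a,Z]$ for anodyne $a$: it forms the double mapping cylinder $Pe=Y\cup_{\{1\}_+\star X}\mathscr{I}_+\star X\cup_{\{1'\}_+\star X}Y$, observes that the inclusion $Pe\subseteq\mathscr{I}_+\star Y$ is an injective weak equivalence because $e$ is, applies Proposition \ref{panod} to \emph{that} inclusion, and extends; thus only the extension property against anodyne maps is ever used, and no pushout-product lemma is required. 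You instead prove full bijectivity in the anodyne case (injectivity via the pushout-product of $(\partial I)_+\hookrightarrow I_+$ with $a$) and then bootstrap to general weak equivalences by the zig-zag applying Proposition \ref{panod} twice together with 2-out-of-3 for weak equivalences. That formal bootstrap is correct as stated (and is arguably cleaner and more modular than the paper's direct argument); note also that the paper's use of $\mathscr{I}$ quietly handles a two-step chain of homotopies, where you instead appeal to homotopy into a Kan fibrant target being a one-step equivalence relation.

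The genuine gap is exactly the step you flag: the claim that $P\to I_+\star W$ is anodyne. This is the kind of statement the paper's proof is engineered to avoid, because $K\star Z$ is defined via $Sp$ applied to a construction on $Ps(Z)$, and $Ps$ is a right adjoint, so $K\star(-)$ does not obviously commute with pushouts; the paper needs a bespoke argument on degrees of cells (the ``In effect'' paragraph in its proof) merely to commute $I_+\star(-)$ past the mapping-cylinder pushout, and that argument exploits the cylinder being glued along the $1$ end. Your reduction plan is plausible: the Remark following Lemma \ref{ldotps} says its cell-by-cell description can be taken as a definition of $K\star Z$, an anodyne attachment along $\Sigma^{\infty+\ell}V_{n,k+}$ adds exactly two cells of the canonical structure of Proposition \ref{pcell}, the classical fact that the cartesian pushout-product of $\partial\Delta^1\hookrightarrow\Delta^1$ with a horn inclusion is anodyne is available, and $\Sigma^{\infty+\ell}$, being a left adjoint, carries simplicial anodyne extensions to spectrum-level ones. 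But the compatibility of the $\star$ cell structure with relative horn attachments, with the pushout defining $P$, and with the filtration of $a$ is the actual technical content here, and it is not carried out; the same issue underlies your unproved assertions that the cylinder collapse is a homotopy equivalence (which the paper proves via the degree argument and Lemma \ref{tdotps1}) and that a single homotopy witnesses $f_0a\simeq f_1a$. Until that cellular compatibility is established, your anodyne-case injectivity, and hence the whole bootstrap, rests on an unproven lemma that the paper's $Pe$ device was designed to circumvent.
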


\vspace{3mm}
\noindent
{\bf Remark:} It is already known, and it also follows from this theorem and Lemma \ref{lsingsp} that
the weak homotopy category of combinatorial spectra is the stable category, and that $[X,Z]$ calculates
morphisms in the weak homotopy category for $Z$ Kan fibrant. Therefore, in particular,
\rref{elocale} is actually an
isomorphism of abelian groups.

\begin{proof}
To prove that \rref{elocale} is onto, first note that without loss of generality, we may assume that $e$ is injective. 
To this end, consider the mapping cylinder
\beg{emappcyl1}{Me = I_+\star X\cup_{\{1\}_+\star X} Y.}
Then by Lemma \ref{ldotps}, 
\beg{meinc}{X=\{0\}_+\star X\r Me.}
is an inclusion, while the projection
$$Me\r \{1\}_+\star X\cup_{\{1\}_+\star X} Y=Y$$
is a homotopy equivalence by Lemma \ref{tdotps1}. 

In effect, 
consider a cell decomposition of \rref{emappcyl1} considered as a relative combinatorial cell spectrum
with respect to the subspectrum $Y$. Then for a cell $a$ of \rref{emappcyl1}, 
we claim that the lowest $j$ such that for 
all $i>j$, $d_i(a)=*$ is the same for $a$ considered as a cell in $I_+\star X$. This is because whenever 
$d_ia\in Y$, there exists an $i^\prime>i$ with $d_{i^\prime}a\notin Y$, such that in $I_+\star X$, $d_ia$ and
$d_{i^\prime}a$ have the same projection to $X$. (This statement depends on the fact that $I_+\star X$ is
attached to $Y$ at the $1$ coordinate.)

Therefore, by the description of $I_+\star?$ given below Lemma \ref{ldotps}, $I_+\star?$ commutes with
the pushout \rref{emappcyl1}, and therefore we can apply Lemma \ref{tdotps1}.

Thus, $e$ may be replaced with the injective morphism
\rref{meinc}.

Now when $e$ is injective, it is contained in an anodyne extension by Proposition \ref{panod}, to which $e$
can be extended by the assumption that $Z$ is Kan fibrant.

To prove that \rref{elocale} is injective, again, without loss of generality, we may assume
that $e$ is injective. Now form the homotopy pushout
$$Pe=Y\cup_{\{1\}_+\star X} \mathscr{I}_+\star X\cup_{\{1^\prime\}_+\star X} Y.$$
Here $\mathscr{I}$ is the simplicial set obtained by attaching two copies of $I$ by $0$; the two other vertices
are denoted by $1,1^\prime$.
We have an inclusion
$$\phi:Pe\subseteq \mathscr{I}_+\star Y,$$
which is moreover an equivalence since $e$ was an equivalence. Now two morphisms $f,g:Y\r Z$ which
are homotopic when composed with $e$ are the same thing as a morphism $\psi:Pe\r Z$. Since $\phi$ is contained
in an anodyne extension by Proposition \ref{panod}, $\psi$ extends to $\mathscr{I}_+\star Y$, which means that
$f$ and $g$ are homotopic, which is what we wanted to prove.
\end{proof}

\begin{corollary}\label{crg}
The category of combinatorial spectra is right Cartan-Eilenberg in the sense of \cite{guillen}
with respect to homotopy equivalences, weak equivalences and Kan fibrant combinatorial spectra.
\end{corollary}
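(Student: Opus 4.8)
The plan is to verify, one axiom at a time, the definition of a right Cartan-Eilenberg category recalled above, with the class of strong homotopy equivalences taken to be the homotopy equivalences, the class of weak homotopy equivalences taken to be the weak equivalences, and the distinguished class of objects taken to be the Kan fibrant combinatorial spectra. Almost all of the substantive content has already been proved in the preceding results, so the argument is largely one of assembly. First I would dispose of the formal hypotheses. That homotopy equivalences are contained in the weak equivalences is precisely the last assertion of Lemma \ref{shomotopy}. Both classes contain all isomorphisms: an isomorphism is a homotopy equivalence, and $\mathscr{L}$ carries isomorphisms to isomorphisms of May spectra. The $2/3$ axiom holds for homotopy equivalences because they are by definition the isomorphisms of the quotient category $h\mathscr{S}$, and for weak equivalences because $\mathscr{L}$ is a functor and weak equivalences of May spectra satisfy $2/3$.

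Next I would argue that the strong homotopy category exists and is $h\mathscr{S}$. By construction $h\mathscr{S}$ is the quotient of the category of combinatorial spectra by the homotopy congruence, and homotopy equivalences are isomorphisms there by definition; what requires checking is that $h\mathscr{S}$ has the universal property of the localization at homotopy equivalences. The point is that any functor $F$ sending homotopy equivalences to isomorphisms must identify homotopic maps. This follows from the standard cylinder argument: let $p:I_+\star X\r X$ be the projection induced by the collapse $I_+\r S^0$ and let $d_0,d_1:X=S^0\star X\r I_+\star X$ be the two inclusions, so that $pd_0=pd_1=\mathrm{id}$. Since the collapse $I_+\r S^0$ is a simplicial homotopy equivalence and $?\star X$ carries simplicial homotopies to spectrum homotopies by Lemma \ref{tdotps1}, the map $p$ is a homotopy equivalence; hence $F(p)$ is invertible and $F(d_0)=F(p)^{-1}=F(d_1)$. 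Composing with a homotopy $h:I_+\star X\r Y$ then yields $F(f)=F(hd_0)=F(hd_1)=F(g)$ whenever $f\simeq g$, so $F$ factors uniquely through $h\mathscr{S}$.

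The two remaining axioms are the genuinely homotopy-theoretic ones, and both are already in hand. That Kan fibrant spectra are local with respect to weak equivalences in $h\mathscr{S}$ — that is, that every weak equivalence $e:X\r Y$ induces a bijection $[Y,Z]\r[X,Z]$ for $Z$ Kan fibrant — is exactly Theorem \ref{tcw}; note that this is the locality on morphisms \emph{into} the distinguished objects, which is the correct (right) direction. That every object admits a weak equivalence into a distinguished one is supplied by the unit of adjunction $\eta:Z\r Sing(\mathscr{L}(Z))$, whose target is Kan fibrant by Lemma \ref{lsingsp}(1) and which is a weak equivalence by Lemma \ref{lsingsp}(2). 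Having verified all the hypotheses, the corollary follows.

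As for the main obstacle, the two heavy inputs, Theorem \ref{tcw} and Lemma \ref{lsingsp}, already carry the weight, so the only step here that needs an actual argument rather than a citation is confirming that the geometrically defined quotient $h\mathscr{S}$ genuinely coincides with the localization at homotopy equivalences, which is the universal-property check reduced to the cylinder computation above. Everything else in the proof is bookkeeping.
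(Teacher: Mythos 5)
Your proof is correct and takes essentially the same route the paper intends: the corollary is stated without proof precisely because it is the assembly of Theorem \ref{tcw} (locality of Kan fibrant spectra in $h\mathscr{S}$), Lemma \ref{lsingsp} (the unit $\eta:Z\r Sing(\mathscr{L}(Z))$ is a weak equivalence into a Kan fibrant object), and Lemma \ref{shomotopy} (homotopy equivalences are weak equivalences), exactly as you do. Your additional verification that $h\mathscr{S}$ genuinely has the universal property of the localization at homotopy equivalences, via the cylinder argument with $p:I_+\star X\r X$ and Lemma \ref{tdotps1}, is a sound check of a point the paper leaves implicit.
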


\vspace{5mm}
\section{Cosimplicial realization}\label{scr}

Consider the category $\Delta\dash \mathscr{C}$ of cosimplicial objects in $\mathscr{C}$ where
$\mathscr{C}$ is either the category $\Delta^{Op}\dash Set_\bullet$ of based simplicial sets or
the category $\mathscr{S}$ of combinatorial spectra. We shall construct geometric realization
functors
\beg{ecosreal}{|?|:\Delta\dash \mathscr{C}\r\mathscr{C}.
}
First, let us consider the case of $\mathscr{C}=\Delta^{Op}\dash Set_\bullet$. For the terms
of a cosimplicial
based simplicial set $X$, we will use the notation $X^m_n$ where $m$ is the cosimplicial and $n$ is
the simplicial coordinate. We define $|X|$ as the equalizer in the category of the diagram
\beg{ecosreal1}{\prod_{m\in \N_0}F(\Delta_{m+}, X^m_?)\begin{array}{c}\r \\[-1ex] \r\end{array}
\prod_{\phi\in\Delta^{in}(k,m)} F(\Delta_{m+}, X^k_?)
}
where $\Delta^{in}$ denotes the subcategory of $\Delta$ consisting of injective morphisms only, and
the two arrows \rref{ecosreal1} correspond to applying the morphism in $\Delta^{in}(k,m)$ either
to $\Delta_{m+}$ or to $X^{k}_?$. 

Notice that we are ``ignoring the degeneracies" in \rref{ecosreal1}. Clearly, this realization
of cosimplicial based simplicial sets is a functor. By a {\em weak equivalence} of cosimplicial based
simplicial sets $f:X\r Y$ we mean a morphism such that for every $m$, $f^m_?:X^m_?\r Y^M_?$
is a weak equivalence. A cosimplicial based simplicial set $X$ is {\em term-wise Kan fibrant} if
each $X^m_?$ is Kan fibrant.

\begin{lemma}\label{ldesc1}
The functor $|?|$ preserves weak equivalences on levelwise Kan fibrant based simplicial sets. 
\end{lemma}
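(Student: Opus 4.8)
The plan is to recognize $|X|$ as a totalization-type limit and to analyze it through a tower of Kan fibrations whose layers are based mapping complexes into the individual levels $X^m_?$, comparing the towers for the source and the target one layer at a time. First I would truncate the equalizer \rref{ecosreal1}: for $N\in\N_0$ let $\mathrm{Tot}_N(X)$ be the equalizer of the analogue of \rref{ecosreal1} in which the products range only over $m\le N$ and over $\phi\in\Delta^{in}(k,m)$ with $m\le N$. Since the relevant end is a limit and $\Delta^{in}=\bigcup_N\Delta^{in}_{\le N}$, we get $|X|=\lim_N\mathrm{Tot}_N(X)$. Comparing the level $N-1$ and level $N$ truncations, the only new factor is $F(\Delta_{N+},X^N_?)$ and the only new relations are those coming from the injective maps $\mathbf{k}\r\mathbf{N}$ with $k<N$, which assemble precisely into the faces of $\del\Delta_N$; this gives the pullback presentation
\[
\mathrm{Tot}_N(X)=\mathrm{Tot}_{N-1}(X)\times_{F(\Delta_{N+}^\circ,X^N_?)}F(\Delta_{N+},X^N_?),
\]
where $\Delta_{N+}^\circ=(\del\Delta_N)_+$ in the notation of Section \ref{scomb}, the right-hand arrow is restriction along $\Delta_{N+}^\circ\subseteq\Delta_{N+}$, and the matching map $\mathrm{Tot}_{N-1}(X)\r F(\Delta_{N+}^\circ,X^N_?)$ is built from the coface operators $X^{N-1}_?\r X^N_?$. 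The base case is $\mathrm{Tot}_0(X)=F(\Delta_{0+},X^0_?)=X^0_?$.

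Next I would check that each projection $\mathrm{Tot}_N(X)\r\mathrm{Tot}_{N-1}(X)$ is a Kan fibration. The inclusion $\Delta_{N+}^\circ\subseteq\Delta_{N+}$ is a monomorphism and $X^N_?$ is Kan fibrant, so by the standard behaviour of the pointed mapping complex the restriction $F(\Delta_{N+},X^N_?)\r F(\Delta_{N+}^\circ,X^N_?)$ is a Kan fibration between Kan complexes; pulling back preserves this. Its fibre over the basepoint is $F(\Delta_{N+}/\Delta_{N+}^\circ,X^N_?)=F(S^N,X^N_?)$, the $N$-fold based loop complex $\Omega^N X^N_?$, since $\Delta_N/\del\Delta_N$ is the simplicial $N$-sphere. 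This is the one place where ignoring the degeneracies in \rref{ecosreal1} matters: because $\Delta^{in}$ contains no codegeneracies, no conormalization intervenes and the whole of $\Omega^N X^N_?$ appears as the layer, rather than a matching-object fibre.

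Now let $f\colon X\r Y$ be a levelwise weak equivalence of levelwise Kan fibrant cosimplicial based simplicial sets; it induces a map of towers $\{\mathrm{Tot}_N(X)\}\r\{\mathrm{Tot}_N(Y)\}$ compatible with the pullback presentations. I claim each $\mathrm{Tot}_N(f)$ is a weak equivalence, by induction on $N$. For $N=0$ this is $f^0_?$, a weak equivalence by hypothesis. For the inductive step, $f$ induces a map of fibration sequences whose base map is $\mathrm{Tot}_{N-1}(f)$ (a weak equivalence by induction) and whose fibre map is $F(S^N,f^N_?)\colon\Omega^N X^N_?\r\Omega^N Y^N_?$; the latter is a weak equivalence because $f^N_?$ is a weak equivalence of Kan complexes and $F(S^N,-)$ preserves such equivalences. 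The same holds for the fibre over every basepoint of $\mathrm{Tot}_{N-1}(Y)$, since an equivalence of Kan complexes induces an equivalence of the relative mapping complexes with prescribed boundary. The long exact sequences of homotopy groups of the two Kan fibrations, together with the five lemma, then give that $\mathrm{Tot}_N(f)$ is a weak equivalence.

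Finally I would pass to the inverse limit. As $\{\mathrm{Tot}_N(X)\}$ and $\{\mathrm{Tot}_N(Y)\}$ are towers of Kan fibrations, their inverse limits compute the respective homotopy limits, $|f|=\lim_N\mathrm{Tot}_N(f)$ is the induced map, and the $\lim$ and $\lim^1$ terms of the Milnor sequences for the homotopy groups of the two towers agree because $\mathrm{Tot}_N(f)$ is an isomorphism on all homotopy groups for every $N$; hence $|f|$ is a weak equivalence. I expect the main obstacle to be precisely this last passage to the limit rather than the layer-by-layer comparison: one must verify carefully that the truncations genuinely form towers of \emph{fibrations} (so that $\lim$ agrees with the homotopy limit and the Milnor sequence applies), and that the comparison of homotopy groups is compatible over all basepoints and at $\pi_0$ and $\pi_1$, where the exact sequences are only sequences of pointed sets and groups. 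The fibrancy input established in the second step is exactly what lets these arguments go through.
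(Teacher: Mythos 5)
Your proof is correct and is essentially the paper's own argument written out in full: the paper's one-line proof cites exactly the ingredients you develop --- that $F(?,X)$ turns the cofibration $\Delta_{N+}^\circ\subseteq\Delta_{N+}$ into a Kan fibration, that pullbacks of simplicial sets along fibrations preserve weak equivalences, and that inverse limits of towers of fibrations do as well --- which implicitly organize into precisely your tower of truncated totalizations with its pullback presentation. The only cosmetic difference is that the paper also invokes the weak equivalence $X\r F(\Delta_{m+},X)$ for Kan fibrant $X$, whereas you compare the two towers directly using that $F(K,-)$ preserves weak equivalences between Kan complexes.
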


\begin{proof}
This follows from the fact that the canonical morphism $X\r F(\Delta_{m,+},X)$ is a weak equivalence when $X$
is Kan fibrant, that $F(?,X)$ turns Kan cofibrations into Kan fibrations, that pullbacks of simplicial sets 
along fibrations preserve weak equivalences, as do directed (inverse) limits of fibrations.
\end{proof}

\vspace{3mm}
\noindent
{\bf Comment:} In \cite{bf}, both co-faces and co-degeneracies are used in defining the cosimplicial realization of
simplicial sets. While that construction seems more natural, it only preserves weak equivalences
on cosimplicial simplicial sets which are fibrant in a stronger sense. Basically, one must require
that the morphism from a given cosimplicial stage to the pullback of all co-degeneracies from the lower
stages is a fibration. We do not know if this has been checked rigorously for the case of Godement
resolutions. 

The difficulty is (Eckmann-Hilton) dual to a similar difficulty with the totalization (=geometric
realization) of simplicial spaces. There, it is important that both faces and degeneracies be used in the 
realization, since only then does one have Milnor's theorem on preserving products (at least as long
as we are in the compactly generated category), which, in turn, is needed when discussing algebraic 
structures (for example, when using the iterated bar construction to construct Eilenberg-MacLane spaces).

In the case of simplicial spaces, one may use Lillig's theorem \cite{lillig} to conclude that individual 
degeneracies being cofibrations is enough to control the homotopy type of the totalization. As far as we know,
a dual of Lillig's theorem for cosimplicial simplicial sets is not known. Nor is this, however, as urgent a problem
as in the case of simplicial spaces: while in the present paper the emphasis is not on further algebraic structures,
cosimplicial realization, even without co-degeneracies, preserves limits (in particular, products)
by the commutation of limits.

\vspace{3mm}
We now apply the definition \rref{ecosreal1} to the case where $X$ is a cosimplicial combinatorial spectrum, thus 
giving a definition of \rref{ecosreal} to $\mathscr{C}=\mathscr{S}$. 
First, observe that a morphism of combinatorial spectra $f:Z\r T$ is a Kan fibration 
if and only if each $f_n:Z_n\r T_n$ is a Kan fibration. Also
note that $\Omega$ preserves Kan fibrations. Finally, directed colimits also preserve Kan fibrations of simplicial
sets (and hence combinatorial spectra). 

\vspace{5mm}

\section{Sheaves}\label{ssh}

We begin with sheaves of sets. We follow \cite{sites} as a reference here. A {\em site} is a category
$\mathscr{C}$
together with a class of sets of morphisms with the same target (called {\em coverings}) which satisfy the usual 
axioms (an isomorphism is a covering, coverings are transitive, and stable under pullback), see \cite{sites}, 
Section 6. A {\em presheaf valued in a category $\mathscr{A}$} is a functor
$\mathscr{F}:\mathscr{C}^{Op}\r\mathscr{A}$. The images of objects (resp. morphisms)
under $\mathscr{F}$ are called {\em sections} (resp. {\em restrictions}).
A {\em sheaf} is a presheaf $\mathscr{F}$ such that
for every covering $\{X_i\r X\}$, the diagram
$$\mathscr{F}(X)\r \prod_i \mathscr{F}(X_i)\begin{array}{c}\r\\[-2ex]\r\end{array}
\prod_{i,j} \mathscr{F}(X_i\times_{X}X_j)$$
where the maps are restrictions is an equalizer. A site is said to have {\em subcanonical topology}
it all representable presheaves are sheaves. The representable (pre)sheaf associated with an object $x$ of
a site will be denoted by $\underline{x}$. 

Morphisms of presheaves are natural transformations,
and sheaves are a full subcategory. The categories of $\mathscr{A}$-valued presheaves and sheaves
on a site $\mathscr{C}$ will be denoted by $pSh_\mathscr{A}(\mathscr{C})$
resp. $Sh_\mathscr{A}(\mathscr{C})$. If the subscript is omitted, we understand $\mathscr{A}=Set$.
The category of sheaves of sets on a site $\mathscr{C}$ is called its {\em topos}.
If $\mathscr{A}$ is a category of universal algebras, then the category of $\mathscr{A}$-valued sheaves is
equivalent to the category of the same type of universal algebras in the topos.
In this case, the forgetful functor from sheaves valued in $\mathscr{A}$ to presheaves valued in 
$\mathscr{A}$ has a left adjoint called {\em sheafification} (\cite{sites}, Section 10). 

A {\em morphism of topoi} $f:Sh(\mathscr{C})\r Sh(\mathscr{D})$ consists of a functor
$$f^{-1}:Sh(\mathscr{D})\r Sh(\mathscr{C})$$
which has a right adjoint $f_*$, and is left exact, i.e preserves finite limits. A {\em point} is 
a morphism of a topos into the topos of sets (i.e. the category of sheaves on $*$). The set $f^{-1}\mathscr{F}$
for a sheaf $\mathscr{F}$ where $f$ is a point is called a {\em stalk}. Points
in this sense can be also characterized in terms of the site $\mathscr{C}$ directly (\cite{sites},
Lemma 31.7). We say that a site has {\em enough points} if a morphism of sheaves is an isomorphism
whenever it is an isomorphism on stalks. In this paper, we will work with sites $\mathscr{C}$ satisfying
the following assumption:
\beg{ass1}{\parbox{3.5in}{The site $\mathscr{C}$ is small (i.e. is a set) and has enough points.}\tag{A1}
}
We call a morphism of sheaves {\em injective} resp. {\em surjective (or onto)} if it is injective resp. surjective
on stalks. Being injective is equivalent to being injective on sections. By \cite{sites}, Lemma 28.5, 
we may make without loss of generality (i.e. by replacing the topos with an isomorphic topos) the following
assumption:
\beg{ess2}{\parbox{3.5in}{The site $\mathscr{C}$ has subcanonical topology, and 
subsheaves of representable sheaves are representable.}\tag{A2}}
Since points are characterized in terms of the topos, in particular, we may attain Assumption (A1) without violating
Assumption (A2).

Finally, Lemma 28.5 of \cite{sites} says that we can replace, equivalently from
the point of view of the structures we study, a (small) site with a given set of sheaves $S$
by a (small) site satisfying 
Assumption (A2) where every element of $S$ becomes representable. Choosing $S$ to be the set of all
quotients (i.e. images of surjective morphisms) of representable sheaves, we may assume $\mathscr{C}$
additionally satisfies the following assumption:
\beg{ess3}{\parbox{3.5in}{For every sheaf $\mathscr{G}$ on $\mathscr{C}$, every point $p$ and
every element $t\in p^{-1}(\mathscr{G})$, there exists an object $u$ of $\mathscr{C}$
and an injective morphism $\underline{u}\r\mathscr{G}$ such that $p$ is in $u$
and $t$ lifts to $\mathscr{G}(u)$.}\tag{A3}}

\begin{lemma}\label{lsglue}
Consider a site $\mathscr{C}$ satisfying Assumptions (A1) and (A2). Suppose we have injective morphisms
of sheaves
\beg{ealphabeta}{\alpha:\underline{x}\r\mathscr{G},\;\beta:\mathscr{F}\r\mathscr{G}}
for some $x\in Obj\mathscr{C}$.
Then there exists a monomorphism $\iota:y\r x$ in $\mathscr{C}$ and a pushout diagram
\beg{ealphabetap}{\diagram
\underline{y}\rto^{\underline{\iota}}\dto_{\phi} &
\underline{x}\dto^{\alpha}\\
\mathscr{F}\rto_\beta\ &\mathscr{F}^\prime
\enddiagram
}
such that the induced morphism
$$\beta^\prime:\mathscr{F}^\prime\r\mathscr{G}$$
is injective.
\end{lemma}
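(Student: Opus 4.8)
The plan is to construct $\mathscr{F}'$ as the subsheaf of $\mathscr{G}$ spanned by the images of $\alpha$ and $\beta$, taking $\underline{y}$ to be their intersection. First, since $\alpha$ and $\beta$ are injective, I regard $\underline{x}$ and $\mathscr{F}$ as subsheaves of $\mathscr{G}$ and form the intersection $\mathscr{F}\times_{\mathscr{G}}\underline{x}$. As $\beta$ is a monomorphism and monomorphisms are stable under pullback, the projection $\mathscr{F}\times_{\mathscr{G}}\underline{x}\to\underline{x}$ is a monomorphism, exhibiting this intersection as a subsheaf of the representable sheaf $\underline{x}$. By Assumption (A2) it is therefore representable, say by an object $y$, and the inclusion $\underline{y}\to\underline{x}$ is a monomorphism of representable sheaves. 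Under the subcanonical topology the Yoneda embedding is fully faithful, so this inclusion comes from a unique morphism $\iota\colon y\to x$ in $\mathscr{C}$; being fully faithful, Yoneda reflects monomorphisms, so $\iota$ is a monomorphism. I let $\phi\colon\underline{y}\to\mathscr{F}$ be the other projection.

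Next I form the pushout $\mathscr{F}'$ of $\mathscr{F}\xleftarrow{\phi}\underline{y}\xrightarrow{\underline{\iota}}\underline{x}$ in the category of sheaves; this yields the square \rref{ealphabetap}. Since $\alpha$ and $\beta$ restrict to the same map on $\underline{y}$ (by construction of $\underline{y}$ as the intersection inside $\mathscr{G}$), the universal property of the pushout produces the desired morphism $\beta'\colon\mathscr{F}'\to\mathscr{G}$.

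It remains to verify that $\beta'$ is injective. By Assumption (A1) it suffices to check injectivity on stalks, so fix a point $p$. The stalk functor $p^{-1}$ is a left adjoint, hence preserves the pushout, and is left exact, hence preserves the pullback defining $\underline{y}$. Writing $A=p^{-1}\underline{x}$ and $B=p^{-1}\mathscr{F}$ as subsets of $S=p^{-1}\mathscr{G}$, we therefore have $p^{-1}\underline{y}=A\cap B$ and $p^{-1}\mathscr{F}'=B\sqcup_{A\cap B}A$, with $p^{-1}\beta'$ the canonical map to $S$. For two subsets of $S$ the pushout over their intersection maps bijectively onto their union $A\cup B\subseteq S$, so $p^{-1}\beta'$ is injective. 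As this holds for every $p$, the morphism $\beta'$ is injective. The only point requiring care is this last passage to stalks: the sheaf pushout $\mathscr{F}'$ is a priori a sheafification and need not be understood sectionwise, but because $p^{-1}$ is simultaneously a left adjoint and left exact, it converts the whole construction into the transparent set-level computation above, which is precisely where the hypotheses (A1) and (A2) do their work.
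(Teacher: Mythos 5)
Your proof is correct and takes essentially the same approach as the paper's: define $\underline{y}$ as the pullback of $\alpha$ and $\beta$, invoke Assumption (A2) for representability and the Yoneda embedding to get the monomorphism $\iota$, and check injectivity of $\beta^\prime$ on stalks using that points $p^{-1}$ are left exact left adjoints, which suffices by Assumption (A1). The only difference is that you spell out the set-level computation (the pushout of two subsets over their intersection maps bijectively onto their union), which the paper leaves implicit.
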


\begin{proof}
We define the diagram \rref{ealphabetap} as the pullback of $\alpha$ and $\beta$. The pullback sheaf is
representable by Assumption (A2). The top row is representable, and $\iota$ is a monomorphism by the
Yoneda lemma. Finally, the reason $\beta^\prime$ is also injective is that it is true after applying
$f^{-1}$ (which is an exact functor) for any point $f$, which is enough by Assumption (A1).
\end{proof}

\vspace{3mm}
To discuss homotopy theory, we begin with sheaves of simplicial sets (simplicial sheaves). Simplicial sets are a
universal algebra, so the category is determined by the topos. By a {\em local equivalence} of simplicial
sheaves (i.e. objects of
$\Delta^{Op}\dash Sh(\mathscr{C})$), we mean a morphism which is
a weak equivalence on stalks. 



\vspace{3mm}
For a monomorphism $\iota:y\r x$ in $\mathscr{C}$,
denote by $\Delta_{n}^{\iota}$ the pushout in the category of simplicial sheaves
\beg{edeltaiota}{\diagram
\Delta_n^\circ\times \underline{y}\rto^{\Delta_n^\circ\times \iota}\dto_{\subseteq
\times Id} & \Delta_n^\circ\times \underline{x}\dto\\
\Delta_n\times\underline{y}\rto & \Delta_{n}^{\iota}.
\enddiagram
}
Here the product of a (simplicial) set with a sheaf of sets is done section-wise. (It also commutes
with taking stalks.)

An injective morphism of sheaves $f:\mathscr{F}\r\mathscr{G}$ is called a
{\em relative cell sheaf} if there exists an ordinal
$\alpha$ and sheaves $\mathscr{G}_\gamma$ for ordinals $\gamma<\alpha$, where
$\mathscr{G}_0=\mathscr{F}$, $\mathscr{G}_\alpha=\mathscr{G}$, for a limit ordinal $\beta$,
$\mathscr{G}_\beta$ is the colimit of $\mathscr{G}_\gamma$, $\gamma<\beta$, such that for any ordinal
$\beta<\alpha$, we have a monomorphism $\iota_\beta:y_\beta\r x_\beta$ in $\mathscr{C}$ 
and a pushout of sheaves of the form
\beg{edefcellsh}{\diagram
\Delta_{n_\beta}^{\iota_\beta}\rto\dto & \mathscr{G}_\beta\dto\\
\Delta_{n_\beta}\times \underline{x}_\beta\rto& \mathscr{G}_{\beta+1}.
\enddiagram
}
An anodyne extension of sheaves is defined the same way, except $\Delta_{n_\beta}^{\iota_\beta}$
is replaced by $E_{n_\beta, k_\beta}^{\iota_\beta}$, which is defined by the pushout diagram
$$
\diagram
V_{n,k}\times \underline{y}\rto^{V_{n,k}\times \iota}\dto_{\subseteq
\times Id} & V_{n,k}\times \underline{x}\dto\\
\Delta_n\times\underline{y}\rto & E_{n,k}^{\iota}.
\enddiagram
$$
Explicitly, an injective morphism of sheaves $f:\mathscr{F}\r\mathscr{G}$ is called an
{\em anodyne extension} if there exists an ordinal
$\alpha$ and sheaves $\mathscr{G}_\gamma$ for ordinals $\gamma<\alpha$, where
$\mathscr{G}_0=\mathscr{F}$, $\mathscr{G}_\alpha=\mathscr{G}$, for a limit ordinal $\beta$,
$\mathscr{G}_\beta$ is the colimit of $\mathscr{G}_\gamma$, $\gamma<\beta$, such that for any ordinal
$\beta<\alpha$, we have a monomorphism $\iota_\beta:y_\beta\r x_\beta$ in $\mathscr{C}$ 
and a pushout of sheaves of the form
\beg{edefcellsh}{\diagram
E_{n_\beta, k_\beta}^{\iota_\beta}\rto\dto & \mathscr{G}_\beta\dto\\
\Delta_{n_\beta}\times \underline{x}_\beta\rto& \mathscr{G}_{\beta+1}.
\enddiagram
}

\begin{lemma}\label{lconstrsh}
Every injective morphism of simplicial sheaves over a site $\mathscr{C}$ which satisfies Assumptions
(A1), (A2) and (A3) is a relative cell sheaf.
\end{lemma}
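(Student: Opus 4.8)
The plan is to prove this exactly as the sheaf-theoretic analogue of Proposition \ref{pcell}: build $\mathscr{G}$ from $\mathscr{F}$ by a skeletal filtration and, within each skeletal stage, attach the prescribed cells $\Delta_{n}^{\iota}\to\Delta_n\times\underline{x}$ one section at a time. The two ingredients not present in the simplicial-set case are Lemma \ref{lsglue}, which lets me attach a single representable cell while controlling the attaching subobject and preserving injectivity, and Assumption (A3), which guarantees that enough representable cells are available to exhaust the target. First I would set $\mathscr{G}^{(-1)}=\mathscr{F}$ and let $\mathscr{G}^{(n)}\subseteq\mathscr{G}$ be the sub-simplicial-sheaf generated by $\mathscr{F}$ together with all simplices of dimension $\le n$; these are subsheaves by (A1) and (A2), and $\mathscr{G}=\operatorname{colim}_n\mathscr{G}^{(n)}$, so it suffices to exhibit each inclusion $\mathscr{G}^{(n-1)}\hookrightarrow\mathscr{G}^{(n)}$ as a transfinite composite of pushouts of cells $\Delta_{n}^{\iota}\to\Delta_n\times\underline{x}$ of the fixed dimension $n$.

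Fix $n$ and work relative to $\mathscr{G}^{(n-1)}$. By the smallness in (A1), the monomorphisms $\underline{x}\to\mathscr{G}_n$ into the sheaf $\mathscr{G}_n$ of $n$-simplices form a set (each such arrow is an $n$-simplex $s\in\mathscr{G}_n(x)$, and we may replace $x$ by the image, representable by (A2)); I would well-order them as $(s_\beta\colon\underline{x}_\beta\to\mathscr{G}_n)_{\beta<\alpha}$. I then construct subsheaves $\mathscr{G}_\beta\hookrightarrow\mathscr{G}^{(n)}$ with $\mathscr{G}_0=\mathscr{G}^{(n-1)}$, taking unions at limit ordinals. At stage $\beta$ I apply Lemma \ref{lsglue} in the category of sheaves of sets, with $\underline{x}_\beta\to\mathscr{G}_n$ and the inclusion $(\mathscr{G}_\beta)_n\hookrightarrow\mathscr{G}_n$ as its two injective morphisms; this produces a monomorphism $\iota_\beta\colon y_\beta\to x_\beta$, namely the locus $\underline{x}_\beta\times_{\mathscr{G}_n}(\mathscr{G}_\beta)_n$ over which $s_\beta$ already lies in $\mathscr{G}_\beta$, together with the fact that $(\mathscr{G}_\beta)_n\cup_{\underline{y}_\beta}\underline{x}_\beta$ injects into $\mathscr{G}_n$.

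The cell $\Delta_n^{\iota_\beta}\to\Delta_n\times\underline{x}_\beta$ now receives a canonical map to $\mathscr{G}_\beta$: on $\Delta_n\times\underline{y}_\beta$ it is the restriction $s_\beta|_{y_\beta}$, which lands in $\mathscr{G}_\beta$ by the choice of $y_\beta$, while on $\Delta_n^\circ\times\underline{x}_\beta$ it is the boundary of $s_\beta$, whose faces are $(n-1)$-simplices and hence lie in $\mathscr{G}^{(n-1)}\subseteq\mathscr{G}_\beta$; the two agree on $\Delta_n^\circ\times\underline{y}_\beta$ because both are restrictions of $s_\beta$, so the map out of the pushout $\Delta_n^{\iota_\beta}$ is well defined. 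I define $\mathscr{G}_{\beta+1}$ by pushing out $\Delta_n^{\iota_\beta}\to\mathscr{G}_\beta$ along $\Delta_n^{\iota_\beta}\to\Delta_n\times\underline{x}_\beta$. When $y_\beta=x_\beta$ this pushout is an isomorphism, which covers the case that $s_\beta$ is already present—in particular when $s_\beta$ is degenerate, since $\mathscr{G}^{(n-1)}$ is closed under degeneracies. Assumption (A3), applied to $\mathscr{G}_n$, guarantees that every germ of an $n$-simplex of $\mathscr{G}$ lifts to some chosen representable section $s_\beta$, so that $\operatorname{colim}_\beta\mathscr{G}_\beta=\mathscr{G}^{(n)}$ once injectivity is known.

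The step I expect to require the most care is the verification that each $\mathscr{G}_{\beta+1}\to\mathscr{G}^{(n)}$ remains injective. By (A1) this may be checked on stalks, where $\mathscr{G}_\beta$, $\mathscr{G}_n$ and $s_\beta$ become ordinary based simplicial sets and the construction reduces to attaching a single (possibly already present, possibly degenerate) simplex to a simplicial subset along the part of its boundary together with the locus where it already lies inside—exactly the classical skeletal pushout along $\Delta_n^\circ\to\Delta_n$, which is injective by the uniqueness of non-degenerate representatives for simplicial sets. Injectivity then propagates through the transfinite union by (A1), and the identification $\operatorname{colim}_\beta\mathscr{G}_\beta=\mathscr{G}^{(n)}$ likewise follows stalkwise from (A3). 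Assembling the stages over all $n$ realizes $f\colon\mathscr{F}\to\mathscr{G}$ as a relative cell sheaf.
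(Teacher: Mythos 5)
Your proposal is correct and takes essentially the same route as the paper's proof: a transfinite sequence of pushouts of cells $\Delta_{n}^{\iota}\to\Delta_n\times\underline{x}$, with Lemma \ref{lsglue} producing the monomorphism $\iota\colon y\to x$ as a pullback and guaranteeing that the map into $\mathscr{G}$ stays injective, Assumption (A1) reducing all verifications to stalks, and Assumption (A3) supplying enough injective representable simplices to exhaust $\mathscr{G}$. The only differences are organizational rather than mathematical---you filter by skeleta and well-order all representable $n$-simplices in advance, allowing trivial attachments when $y_\beta=x_\beta$, whereas the paper greedily picks at each stage a stalk element not yet present whose faces are; one small inaccuracy is that representability of the \emph{image} of $\underline{x}\to\mathscr{G}_n$ is not literally what (A2) states (it concerns subsheaves of representables), but this is harmless since (A3) already furnishes injective representable subsheaves, which is all your argument actually uses.
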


\begin{proof}
Consider an injective morphism of sheaves of simplicial 
sets $\mathscr{F}\r\mathscr{G}$. Put $\mathscr{G}_0=\mathscr{F}$.
We will construct, inductively, sheaves $\mathscr{G}_\beta$ as in the definition of a relative cell sheaf. For $\beta$
a limit ordinal, just take the colimit of $\mathscr{G}_\gamma$ over $\gamma<\beta$. For any ordinal $\beta$, 
we will have an injective morphism $\mathscr{G}_\beta\r \mathscr{G}$. If it is onto, we are done. Otherwise,
by Assumption (A1), there exists a point $p$ and an element $t\in p^{-1}(\mathscr{G})$ which is not in 
$\mathscr{G}_\beta$, but whose faces are. 
By Assumption (A3), there exists an object $u$ containing $p$ and an injective morphism
$\alpha:\underline{u}\r \mathscr{G}$ such that $t$ lifts to $\mathscr{G}(u)$ via $\alpha$. 

Now we are in the situation of Lemma \ref{lsglue}, with $\mathscr{F}$ replaced by 
the appropriate term of $\mathscr{G}_\beta$. 
Let $\mathscr{G}_{\beta+1}$ be attaching one non-degenerate simplex in the appropriate
dimension by the pushout \rref{ealphabetap}. 

The process is guaranteed to end by the smallness of $\mathscr{C}$.
\end{proof}

\vspace{3mm}

\begin{lemma}\label{lanodsh}
An anodyne extension of sheaves over a site $\mathscr{C}$ which satisfies Assumptions
(A1) and (A2) is a local equivalence.
\end{lemma}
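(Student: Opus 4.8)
The plan is to verify the defining condition of a local equivalence directly on stalks. For any point $p$ the stalk functor $p^{-1}$ is exact (it preserves finite limits, in particular monomorphisms) and, being a left adjoint, preserves all colimits; moreover, as noted after \rref{edeltaiota}, it commutes with the product of a sheaf by a fixed simplicial set. Thus I would apply $p^{-1}$ to the transfinite tower $(\mathscr{G}_\gamma)_{\gamma\leq\alpha}$ defining an anodyne extension $\mathscr{F}=\mathscr{G}_0\r\mathscr{G}=\mathscr{G}_\alpha$, obtaining a transfinite tower of simplicial sets $(p^{-1}\mathscr{G}_\gamma)$ built from the images of the same pushouts. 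It then suffices to check that each successor step is an anodyne extension of simplicial sets and to assemble the tower.

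For the key step, fix a point $p$ and write $S_x=p^{-1}(\underline{x}_\beta)$ and $S_y=p^{-1}(\underline{y}_\beta)$. Since $\iota_\beta:y_\beta\r x_\beta$ is a monomorphism and $p^{-1}$ is exact, $\underline{\iota}_\beta$ is injective on stalks (injective being equivalent to injective on stalks), so $S_y$ is a subset of $S_x$; put $T=S_x\smin S_y$. Applying $p^{-1}$ to the pushout defining $E^{\iota_\beta}_{n_\beta,k_\beta}$ and using commutation with the products $V_{n_\beta,k_\beta}\times(-)$, $\Delta_{n_\beta}\times(-)$ and with the pushout, I would compute, using $S_x=S_y\amalg T$,
$$p^{-1}\bigl(E^{\iota_\beta}_{n_\beta,k_\beta}\bigr)\cong(\Delta_{n_\beta}\times S_y)\amalg(V_{n_\beta,k_\beta}\times T),$$
together with $p^{-1}(\Delta_{n_\beta}\times\underline{x}_\beta)\cong(\Delta_{n_\beta}\times S_y)\amalg(\Delta_{n_\beta}\times T)$, the comparison map being the identity on the $S_y$-summand and the horn inclusion $V_{n_\beta,k_\beta}\hookrightarrow\Delta_{n_\beta}$ on each element of $T$.

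Consequently, on stalks the pushout \rref{edefcellsh} attaches, along the horn $V_{n_\beta,k_\beta}$, one copy of $\Delta_{n_\beta}$ for each element of $T$, the $S_y$-part being glued by an identity and hence contributing nothing new (pushing out along an isomorphism summand is an isomorphism). This is precisely a horn-filling, so $p^{-1}\mathscr{G}_\beta\r p^{-1}\mathscr{G}_{\beta+1}$ is an anodyne extension of simplicial sets. Finally I would assemble the tower: anodyne extensions of simplicial sets are weak equivalences, and the class of anodyne extensions is closed under (transfinite) composition and under the colimits taken at limit ordinals, which $p^{-1}$ preserves. Hence $p^{-1}\mathscr{F}\r p^{-1}\mathscr{G}$ is an anodyne extension, in particular a weak equivalence, for every point $p$, and by Assumption (A1) the original morphism is a local equivalence.

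The main obstacle I anticipate is the stalkwise identification of the pushout: one must be certain that $p^{-1}$ turns the representables into the expected sets, that it commutes with the two products and with the defining pushouts \emph{simultaneously}, and that the decomposition $S_x=S_y\amalg T$ genuinely exhibits the comparison map as a disjoint union of horn inclusions indexed by $T$ together with an identity on the $S_y$-summand. Once this bookkeeping is settled, the reduction to standard facts about anodyne extensions of simplicial sets is routine.
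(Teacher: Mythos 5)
Your proof is correct and is essentially the paper's own argument written out in full: the paper's one-line proof (``use the definition, performing the construction on the presheaf level first, and then sheafifying'') amounts to precisely the commutations you verify --- the defining pushouts, products with $V_{n,k}$ and $\Delta_n$, and transfinite compositions all survive evaluation, after which each attaching step decomposes as an identity on the $S_y$-summand together with horn inclusions $V_{n_\beta,k_\beta}\hookrightarrow\Delta_{n_\beta}$ indexed by $T=S_x\smin S_y$. Your choice to apply the exact, colimit-preserving functor $p^{-1}$ directly, rather than working section-wise on presheaves and then invoking that sheafification preserves stalks, is an equivalent (if anything cleaner) packaging of the same idea.
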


\begin{proof}
Use the definition, performing the construction on the presheaf level 
first, and then sheafifying.
\end{proof}

\begin{lemma}\label{lanodextsh}
For an injective local equivalence of sheaves $f:\mathscr{F}\r\mathscr{G}$
over a site $\mathscr{C}$ which satisfies Assumptions
(A1), (A2) and (A3), there exists a morphism of sheaves
$g:\mathscr{G}\r\mathscr{H}$ such that $gf$ is an anodyne extension.
\end{lemma}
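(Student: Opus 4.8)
The plan is to mirror the proof of Proposition \ref{panod}, using Lemma \ref{lconstrsh} in place of Proposition \ref{pcell}. First I would invoke Lemma \ref{lconstrsh} to present $f$ as a relative cell sheaf, so that $\mathscr{G}=\mathscr{G}_\alpha$ is built from $\mathscr{G}_0=\mathscr{F}$ by a transfinite sequence of pushouts \rref{edefcellsh} attaching $\Delta_{n_\beta}\times\underline{x}_\beta$ along $\Delta_{n_\beta}^{\iota_\beta}$. I would then construct, by transfinite induction, a parallel tower $\mathscr{H}_\beta\supseteq\mathscr{G}_\beta$ with $\mathscr{H}_0=\mathscr{F}$, taking colimits at limit stages, maintaining the property that the inclusion $\mathscr{F}\hookrightarrow\mathscr{H}_\beta$ is an anodyne extension at every stage. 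Once this is done, setting $\mathscr{H}=\mathscr{H}_\alpha$ and letting $g:\mathscr{G}=\mathscr{G}_\alpha\hookrightarrow\mathscr{H}_\alpha=\mathscr{H}$ be the inclusion gives $gf=(\mathscr{F}\hookrightarrow\mathscr{H})$, which is anodyne by construction, as required.

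The successor step is where the work lies. Given $\mathscr{G}_\beta\subseteq\mathscr{H}_\beta$ with $\mathscr{F}\hookrightarrow\mathscr{H}_\beta$ anodyne, I would first enlarge $\mathscr{H}_\beta$ by a sheaf-level small object argument, attaching all possible anodyne cells $E_{n,k}^{\iota}\hookrightarrow\Delta_n\times\underline{x}$ (the indexing being a set by the smallness of $\mathscr{C}$ in (A1) and the generation of sheaves by representables from (A2)), so as to make $\mathscr{H}_\beta$ locally Kan fibrant, i.e.\ Kan fibrant on stalks; this enlargement is itself anodyne, hence a local equivalence by Lemma \ref{lanodsh}, and composes with $\mathscr{F}\hookrightarrow\mathscr{H}_\beta$ to keep that inclusion anodyne. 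Next I would extend the attaching map $\Delta_{n_\beta}^{\iota_\beta}\to\mathscr{G}_\beta\subseteq\mathscr{H}_\beta$ to a map $\Delta_{n_\beta}\times\underline{x}_\beta\to\mathscr{H}_\beta$ filling the cell, and finally, instead of attaching the bare cell, attach the prism $\Delta_{n_\beta}\times\Delta_1\times\underline{x}_\beta$ along the sheaf analogue of $(\Delta_{n_\beta}\times\{0\})\cup(\Delta_{n_\beta}^\circ\times\Delta_1)$, an $E$-type anodyne gadget, exactly as at the end of Proposition \ref{panod}. This prism attachment is anodyne, preserving the anodyne status of $\mathscr{F}\hookrightarrow\mathscr{H}_{\beta+1}$, while its top face exhibits $\mathscr{G}_{\beta+1}\subseteq\mathscr{H}_{\beta+1}$, closing the induction.

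The main obstacle is precisely the extension of the attaching map across the cell, i.e.\ the filling of $\Delta_{n_\beta}^{\iota_\beta}\to\mathscr{H}_\beta$ over $\Delta_{n_\beta}\times\underline{x}_\beta$. Working on stalks via (A1): over the points of $x_\beta$ lying in $y_\beta$ the whole simplex is already present, while over the points of $x_\beta$ outside $y_\beta$ this is a genuine simplicial extension problem $\Delta_{n_\beta}^\circ\to(\mathscr{H}_\beta)_p$ into the now Kan fibrant stalk. Its solvability rests on the vanishing of the associated obstruction class, which I would argue from the fact that the cell is already filled in $\mathscr{G}$, together with the local equivalence hypothesis on $f$ and the anodyne (hence local) equivalence $\mathscr{F}\hookrightarrow\mathscr{H}_\beta$, comparing homotopy types over the common sub-sheaf $\mathscr{F}$; this is the delicate point, and it is the exact analogue of the step justified in Proposition \ref{panod} by the phrase ``the cell we are trying to attach was already in $X_\beta$''. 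The second, sheaf-specific, difficulty is that such a stalkwise filling must be promoted to an honest morphism of sheaves over the representable $\underline{x}_\beta$: here I would use subcanonicity and (A3) together with the representability of subsheaves from (A2) and the gluing mechanism of Lemma \ref{lsglue} to realise the filling as a genuine section over $x_\beta$.
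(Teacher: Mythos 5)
Your global architecture (transfinite parallel tower, small object argument to create a Kan fibration, replacing the bare cell attachment by a prism attachment as at the end of Proposition \ref{panod}) matches the paper's proof, but there is a genuine gap at exactly the point you flag as delicate: the filling of the attaching map $\Delta_{n_\beta}^{\iota_\beta}\r\mathscr{H}_\beta$ over $\Delta_{n_\beta}\times\underline{x}_\beta$. You fix the cell decomposition of $f$ once and for all by invoking the statement of Lemma \ref{lconstrsh}, and then try to fill the cell over the \emph{given} $x_\beta$ by stalkwise obstruction theory plus a gluing step. This cannot work as stated. The hypothesis that $f$ is a local equivalence only produces lifts up to homotopy at the level of stalks, i.e.\ after passing to the filtered colimit of sections over smaller and smaller objects; over the fixed $x_\beta$ there may be no lift even up to homotopy. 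Moreover, even granting stalkwise fillings at every point of $x_\beta$, these are made over different, pointwise-chosen shrinkings, and neither subcanonicity, (A3), nor Lemma \ref{lsglue} provides any mechanism for gluing them into a single section over $x_\beta$: Lemma \ref{lsglue} is a pushout/representability statement about injective morphisms, not a descent statement for homotopy-coherent local data. Relatedly, your fibrancy step is off target: you make $\mathscr{H}_\beta$ Kan fibrant \emph{on stalks}, but what the extension argument actually requires is that the restriction map on \emph{sections} $\mathscr{H}_\beta(x_\beta)\r\mathscr{H}_\beta(y_\beta)$ \rref{egbeta1} be a Kan fibration, since the extension over $\Delta^{\iota_\beta}_{n_\beta}$ is, by adjunction, a relative lifting problem between these two simplicial sets of sections.

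The paper resolves this by \emph{not} fixing the cell decomposition in advance: it re-runs the adaptive construction from the proof of Lemma \ref{lconstrsh}, choosing the next cell and the object it is attached over simultaneously with the tower $\mathscr{H}_\beta$. Concretely, given the element $\widetilde{t}\in\mathscr{G}(u)$ produced by (A1) and (A3), one uses the local equivalence hypothesis to \emph{shrink} $u$ to a subobject $x_\beta\subseteq u$ over which $\widetilde{t}$ lifts to the geometric realization of $\mathscr{H}_\beta(x_\beta)$ up to homotopy --- this is where ``stalk $=$ filtered colimit of sections'' is cashed in, replacing your stalkwise obstruction argument, and it is precisely the freedom your fixed decomposition forfeits. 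Then, after the $\omega$-step anodyne attachment making \rref{egbeta1} a Kan fibration on sections, the homotopy lift is upgraded to an honest simplicial extension \rref{edeltaextet} using the fact that for a Kan fibration a lifting problem solvable after geometric realization is solvable simplicially (the ``Caution'' in the paper's proof). With that extension in hand, your prism attachment step and the set-theoretic termination argument go through as you describe.
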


\begin{proof}
We shall imitate the proof of Lemma \ref{lconstrsh}, expressing 
$f$ as a relative cell sheaf. Using the notation in the definition, we will 
construct, by induction, morphisms 
\beg{egbetata}{g_\beta:\mathscr{G}_\beta\r\mathscr{H}_\beta,}
\beg{egbetata1}{\mathscr{G}_\beta\subseteq \mathscr{G},}
such that $g_\beta f$ is an anodyne extension. For $\beta$ a limit ordinal, we can just take the directed
direct limit, so let us assume \rref{egbetata} was constructed for a given $\beta$. 
Again, if \rref{egbetata1} is onto, we are done. Otherwise, by Assumption (A1), there exists a point $p$ and
an element $t\in p^{-1}(\mathscr{G}) $ which is not in $\mathscr{G}_\beta$ but whose faces are. By Assumption
(A3), there exists an object $u$ containing $p$ and an injective morphism $\alpha:\underline{u}\r \mathscr{G}$
such that $t$ lifts to $\widetilde{t}\in
\mathscr{G}(u)$ via $\alpha$ and whose faces are in $\mathscr{G}_\beta$. Furthermore, since
$f$ is a local equivalence, we may assume, upon replacing $u$ with another object $x_\beta\subseteq u$, 
that $\widetilde{t}$
lifts to the geometric realization of $\mathscr{H}_\beta(x_\beta)$ up to homotopy. Now considering the subobject 
$y_\beta$
as in Lemma \ref{lsglue}, 
we can assume without loss of generality that the restriction
\beg{egbeta1}{\mathscr{H}_\beta(x_\beta)\r\mathscr{H}_\beta(y_\beta)
}
is a Kan fibration. To this end, attach, in $\omega$ steps, each time all non-isomorphic pushouts of the form
$$\diagram
E_{n,k}^{\iota_\beta}\dto_\subset \rto & ?\\
\Delta_n\times \underline{x}_\beta &
\enddiagram
$$
Despite the fact that this is not exactly the same process as the canonical factorization into an anodyne extension
and Kan fibration of simplicial sets (since sheafification is performed each time), the small object  nevertheless
applies, so taking the colimit over the $\omega$ steps replaces $\mathscr{G}_\beta$ with a sheaf where
\rref{egbeta1} is a Kan fibration. 

Now consider the pushout defining $\mathscr{G}_{\beta+1}$ in \rref{edefcellsh}. By our assumption, 
the composition 
$$\Delta^{\iota_\beta}_{n_\beta}\r\mathscr{G}_\beta\r\mathscr{G}$$
extends to
$$\Delta_{n_\beta}\times\underline{x}_\beta\r\mathscr{G},$$
and moreover by our assumptions,
$$\Delta^{\iota_\beta}_{n_\beta}\r\mathscr{G}_\beta\r\mathscr{H}_\beta$$
extends to 
\beg{edeltaextet}{\Delta_{n_\beta}\times \underline{x}_\beta\r \mathscr{H}_\beta}
by adjunction and the assumption that \rref{egbeta1} is a Kan fibration. (Caution: We do not know that
$\mathscr{F}(x)\r\mathscr{H}_\beta(x)$ is a weak equivalence! However, it does not matter, since
by assumption, the homotopy lifting problem can be solved in $|\mathscr{F}(x)|$, and in a Kan fibration,
a homotopy lifting problem which has a solution upon geometric realization has a solution.)

But since we have \rref{edeltaextet}, we may extend the pushout \rref{edefcellsh} to a pushout of the
form
\beg{ehext}{\diagram
Q_{n_\beta}^{\iota_\beta}\rto\dto &\mathscr{H}_\beta\dto\\
\Delta_{n_\beta}\times I\times \underline{x}_\beta \rto &\mathscr{H}_{\beta+1}
\enddiagram}
where $Q_{n}^\iota$ for a monomorphism $\iota:y\r x$ in $\mathscr{C}$ is defined as a pushout
$$\diagram
G_n\times \underline{y}\rto\dto &G_n\times \underline{x}\dto\\
\Delta_n\times I\times \underline{y}\rto & Q_n^\iota& 
\enddiagram
$$
where
$$G_n=\Delta_n^\circ\times I\cup \Delta_n\times\{0\}.$$
But \rref{ehext} is an anodyne extension. Again, the process must eventually terminate for set-theoretical reasons.
\end{proof}

\vspace{3mm}
A {\em strong homotopy} of simplicial sheaves on $\mathscr{C}$ is a morphism of the form
$$h:\mathscr{F}\times I\r \mathscr{G}.$$
Multiplication by $I$ is performed section-wise, and we sheafify the result. 
$h$ is also called a strong homotopy between the restriction of $h$ to $\mathscr{F}\times \{0\}$ 
and $\mathscr{F}\times\{1\}$. We may now consider the smallest equivalence relation on morphisms
of simplicial sheaves which contains strong homotopy. This is obviously a congruence, and the 
quotient category is called the {\em strong homotopy category of simplicial sheaves}. 

\begin{lemma}\label{lshgh}
A strong homotopy equivalence of sheaves of simplicial sets is an equivalence on sections, and hence
a local equivalence.
\end{lemma}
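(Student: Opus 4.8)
The plan is to show first that a strong homotopy equivalence is an equivalence on each set of sections, and then to deduce the local statement by passing to stalks. The key preliminary observation is that evaluation at an object commutes with the operation $(\dash)\times I$ used to define strong homotopy. Indeed, $I=\Delta_1$ is a fixed (externally given) simplicial set with finitely many simplices in each degree, and for a constant set $S$ the functor $S\times(\dash)$ preserves limits in $Set$; hence for each simplicial degree the sectionwise product $\mathscr{F}_n\times I_n$ already satisfies the sheaf condition, so the sheafification in the definition of strong homotopy is a no-op. Consequently, for every object $u$ of $\mathscr{C}$ the evaluation functor $\Gamma_u:\Delta^{Op}\dash Sh(\mathscr{C})\r \Delta^{Op}\dash Set_\bullet$ satisfies $\Gamma_u(\mathscr{F}\times I)=\mathscr{F}(u)\times I$ on the nose.

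From this, evaluation sends a strong homotopy $h:\mathscr{F}\times I\r\mathscr{G}$ to an ordinary simplicial homotopy $h(u):\mathscr{F}(u)\times I\r\mathscr{G}(u)$ between the two restrictions. Therefore $\Gamma_u$ carries the congruence generated by strong homotopy into the congruence generated by simplicial homotopy, and so descends to a functor from the strong homotopy category of simplicial sheaves to the homotopy category of based simplicial sets. A functor takes isomorphisms to isomorphisms, so if $f:\mathscr{F}\r\mathscr{G}$ is a strong homotopy equivalence, then $f(u)=\Gamma_u(f)$ is an isomorphism in the homotopy category of simplicial sets, i.e. a simplicial homotopy equivalence. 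Since a simplicial homotopy equivalence becomes a homotopy equivalence after simplicial realization, it is in particular a weak equivalence; thus $f(u)$ is a weak equivalence for every $u$, which is exactly the assertion that $f$ is an equivalence on sections.

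Finally, to conclude that $f$ is a local equivalence I would pass to stalks. Under the characterization of points in terms of the site, each stalk $p^{-1}(\mathscr{F})$ is computed as a filtered colimit of sections $\mathscr{F}(u)$, and this colimit is formed degreewise, hence commutes with simplicial realization. Because filtered colimits of weak equivalences of simplicial sets are again weak equivalences, the fact that every $f(u)$ is a weak equivalence forces each stalk map $p^{-1}(f)$ to be a weak equivalence, so $f$ is a local equivalence. The only point requiring care is the opening observation that $(\dash)\times I$ is already sheaf-valued and thus commutes with $\Gamma_u$; everything afterwards is formal functoriality together with the standard facts that simplicial homotopy equivalences are weak equivalences and that filtered colimits preserve weak equivalences.
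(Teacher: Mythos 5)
Your overall route---evaluate at sections to obtain a simplicial homotopy, descend to homotopy categories, then pass to stalks as filtered colimits of sections---is exactly the paper's. But the step you yourself single out as ``the only point requiring care'' is wrong as stated. The section-wise product $\mathscr{F}\times I$ is in general \emph{not} a sheaf, so the sheafification in the definition of strong homotopy is not a no-op, and the identity $\Gamma_u(\mathscr{F}\times I)=\mathscr{F}(u)\times I$ fails on the nose. Your compatibility analysis does show that the presheaf $u\mapsto \mathscr{F}(u)\times I_n$ satisfies descent for coverings indexed by a \emph{nonempty} set (the constant coordinates are forced to agree pairwise), but it fails for empty coverings: if an object $u$ is covered by the empty family, the sheaf axiom forces the sections over $u$ to be a point, while the presheaf has sections $\mathscr{F}(u)\times I_n$ with $|I_n|=n+2>1$. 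This happens on the site of open subsets of a topological space (which satisfies (A1)--(A3)) at the empty open set, and after sheafifying the damage propagates: since sheafification $a$ preserves finite products, $a(\mathscr{F}\times I)=\mathscr{F}\times \underline{I}$ where $\underline{I}$ is the \emph{constant sheaf}, whose sections over a disconnected object are locally constant $I$-valued functions, strictly bigger than $I$. (Your supporting claim that $S\times(\dash)$ preserves limits in $Set$ is also false: it preserves connected limits only, not, e.g., the terminal object, which is precisely where the empty-covering condition bites.)

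Fortunately the gap is localized and the repair is one line---and it is what the paper's ``by definition'' silently uses: since $\mathscr{G}$ is a sheaf, the adjunction between sheafification and the inclusion of sheaves into presheaves identifies morphisms $a(\mathscr{F}\times I)\r\mathscr{G}$ with presheaf morphisms $\mathscr{F}\times I\r\mathscr{G}$, by composing with the unit $\mathscr{F}\times I\r a(\mathscr{F}\times I)$. Evaluating the presheaf morphism at $u$ gives the desired simplicial homotopy $\mathscr{F}(u)\times I\r\mathscr{G}(u)$, and since the endpoint inclusions $\mathscr{F}\times\{0\}$, $\mathscr{F}\times\{1\}$ factor through the unit, it is a homotopy between $f(u)$ and $g(u)$. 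With this substitute for your opening observation, the remainder of your argument---the induced functor on strong homotopy categories, simplicial homotopy equivalences becoming weak equivalences after realization, and stalks as filtered colimits of sections together with the preservation of weak equivalences under filtered colimits---is correct and agrees with the paper's proof.
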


\begin{proof}
For a strong homotopy of simplicial sheaves
$$I\times \mathscr{F}\r\mathscr{G}$$
and every $u\in Obj(\mathscr{C})$, we obtain, by definition, a simplicial homotopy on sections
$$I\times \mathscr{F}(u)\r\mathscr{G}(u).$$
Therefore, a strong homotopy equivalence of sheaves gives a simplicial homotopy equivalence after applying
sections, hence a local equivalence.
\end{proof}

\vspace{3mm}
\begin{theorem}\label{tcegodement}
Under the Assumptions (A1), (A2) and (A3), and assuming also that $\mathscr{C}$ has finite cohomological
dimension, the category $\Delta^{Op}\dash Sh(\mathscr{C})$
of simplicial sheaves on $\mathscr{C}$ is right Cartan-Eilenberg with respect to
strong homotopy equivalence, local equivalence and cosimplicial Godement resolutions.
\end{theorem}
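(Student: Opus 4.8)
The plan is to present this structure as a composite of two right Cartan--Eilenberg structures, taking the outer step from \cite{rg}, and to take for $\mathscr{B}$ the realizations of Godement resolutions. Concretely, using that $\mathscr{C}$ has enough points (Assumption (A1)), write $p^{-1}$ for the (conservative, exact) family of stalk functors and let $G^\bullet\mathscr{F}$ be the cosimplicial simplicial sheaf given by the cobar construction on the associated monad $p_*p^{-1}$, arranged so that each stalk $p^{-1}G^m\mathscr{F}$ is a Kan complex (applying $Ex^\infty$ to stalks if necessary). Let $R\mathscr{F}=|G^\bullet\mathscr{F}|$ be its cosimplicial realization from Section \ref{scr}; by the fibrancy properties used in the proof of Lemma \ref{ldesc1}, this $R\mathscr{F}$ is section-wise Kan fibrant. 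Take $\mathscr{B}$ to be the class of such $R\mathscr{F}$. There are then two things to verify: that the augmentation $\mathscr{F}\to R\mathscr{F}$ is a local equivalence (the resolution condition), and that each $R\mathscr{F}$ is local with respect to local equivalences in the strong homotopy category (the locality condition).

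For the resolution condition I would argue on stalks. The augmented cosimplicial object $p^{-1}\mathscr{F}\to p^{-1}G^\bullet\mathscr{F}$ has an extra codegeneracy coming from the counit $p^{-1}p_*\to \mathrm{id}$, hence is a cosimplicially contractible, levelwise Kan fibrant resolution; by Lemma \ref{ldesc1} its realization $|p^{-1}G^\bullet\mathscr{F}|$ is weakly equivalent to $p^{-1}\mathscr{F}$. It remains to interchange stalks with realization, i.e. to identify $p^{-1}R\mathscr{F}=p^{-1}|G^\bullet\mathscr{F}|$ with $|p^{-1}G^\bullet\mathscr{F}|$. This is exactly where finite cohomological dimension enters, and I expect it to be the main obstacle: the realization $|?|$ is built as an infinite equalizer of products, so it need not commute with the filtered colimits defining stalks. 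Finite cohomological dimension forces a horizontal vanishing line in the descent spectral sequence, so the $\mathrm{Tot}$-tower computing $|G^\bullet\mathscr{F}|$ stabilizes at a finite stage; the realization is then a \emph{finite} limit, which the exact functor $p^{-1}$ preserves. Combining the interchange with the contraction gives that $p^{-1}(\mathscr{F}\to R\mathscr{F})$ is a weak equivalence for every point, i.e. $\mathscr{F}\to R\mathscr{F}$ is a local equivalence.

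For the locality condition I would use the composition of Cartan--Eilenberg structures. By \cite{rg} there is a right Cartan--Eilenberg structure whose strong equivalences are section-wise equivalences, whose weak equivalences are local equivalences, and whose local objects are the cosimplicial Godement resolutions; since $R\mathscr{F}$ is such a resolution, it is local with respect to local equivalences in the section-wise homotopy category (the localization of $\Delta^{Op}\dash Sh(\mathscr{C})$ at section-wise equivalences). Next I would set up the auxiliary inner structure whose strong equivalences are the strong homotopy equivalences of this paper and whose weak equivalences are section-wise equivalences, with local objects the section-wise Kan fibrant simplicial sheaves; this is the section-wise analogue of Theorem \ref{tcw}, and its locality statement is proved by the same mapping-cylinder and anodyne-extension argument (Lemmas \ref{lshgh}, \ref{lconstrsh}, \ref{lanodextsh}, \ref{lanodsh}) carried out on sections. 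Because $R\mathscr{F}$ is section-wise Kan fibrant it is local for this inner structure, and the defining property of a right Cartan--Eilenberg category identifies morphisms into $R\mathscr{F}$ in the strong homotopy category with morphisms into $R\mathscr{F}$ in the section-wise homotopy category. Composing these identifications with the \cite{rg} locality yields, for any local equivalence $e:\mathscr{X}_1\to\mathscr{X}_2$, the chain
\[
\mathrm{Hom}_{h_s}(\mathscr{X}_2,R\mathscr{F})\cong\mathrm{Hom}_{rg}(\mathscr{X}_2,R\mathscr{F})\cong\mathrm{Hom}_{rg}(\mathscr{X}_1,R\mathscr{F})\cong\mathrm{Hom}_{h_s}(\mathscr{X}_1,R\mathscr{F}),
\]
whose composite is $[e,R\mathscr{F}]$; hence $[e,R\mathscr{F}]$ is a bijection.

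Finally I would assemble the bookkeeping: strong homotopy equivalences are section-wise, hence local, equivalences (Lemma \ref{lshgh}), both classes contain the isomorphisms and satisfy the $2/3$ axiom, and the strong homotopy category exists as the congruence quotient of $\Delta^{Op}\dash Sh(\mathscr{C})$ by $\simeq$. With the resolution and locality conditions established, the data (strong homotopy equivalences $\subseteq$ local equivalences, $\mathscr{B}$) satisfy the definition of a right Cartan--Eilenberg category. The one genuinely delicate point remains the stalk--realization interchange controlled by finite cohomological dimension; the inner section-wise structure, by contrast, is a routine transcription of the absolute case.
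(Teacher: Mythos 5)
Your overall skeleton (reduce everything to showing $[e,\mathscr{X}]$ is a bijection for Godement resolutions $\mathscr{X}$, via mapping cylinders and anodyne extensions) overlaps with the paper, but your route through a composite of two Cartan--Eilenberg structures has a genuine gap at the inner step, and it is exactly the point the paper's proof is built around. The paper's locality argument rests on the observation \rref{etceclaim} that cosimplicial Godement resolutions have the property that their restrictions $\mathscr{X}(x)\to\mathscr{X}(y)$ along \emph{monomorphisms} $\iota:y\to x$ of $\mathscr{C}$ are Kan fibrations. This is strictly stronger than your hypothesis that $R\mathscr{F}$ is section-wise Kan fibrant, and the difference matters: anodyne extensions of sheaves are built from cells $E^{\iota}_{n,k}\to\Delta_n\times\underline{x}$ indexed by monomorphisms $\iota:y\to x$ of the site (Lemmas \ref{lconstrsh}, \ref{lanodextsh}), and by adjunction the extension problem into $\mathscr{Z}$ over such a cell is a lifting problem against the restriction map $\mathscr{Z}(x)\to\mathscr{Z}(y)$, which must therefore be a Kan fibration; fibrancy of each individual $\mathscr{Z}(x)$ is the degenerate case and gives nothing. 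Consequently your inner structure --- strong homotopy equivalences versus section-wise equivalences with local objects \emph{all} section-wise Kan fibrant sheaves --- is unsubstantiated and almost certainly false as stated, and the claim that its locality is ``a routine transcription'' of Theorem \ref{tcw} ``carried out on sections'' does not survive scrutiny: there is no section-wise cell or anodyne machinery for sheaves, since the attachments in Lemmas \ref{lconstrsh} and \ref{lanodextsh} are sheafified and are governed by the site through Assumption (A3) and Lemma \ref{lsglue}. Once you strengthen the fibrancy hypothesis to \rref{etceclaim}, the direct argument of the paper (mapping cylinder, Lemma \ref{lanodextsh} applied to the injective local equivalence, extension via \rref{etceclaim}; then the double mapping cylinder $\mathscr{P}\subseteq\mathscr{G}\times\mathscr{I}$ for injectivity) proves locality for \emph{all} local equivalences in one stroke, so the detour through the \cite{rg} structure and your three-term chain of bijections becomes superfluous: your chain needs the two outer bijections, which are precisely what is in question.

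A secondary inaccuracy concerns the resolution condition. Your mechanism --- that finite cohomological dimension makes the $\mathrm{Tot}$-tower ``stabilize at a finite stage,'' so that the realization is a finite limit preserved by the exact stalk functor $p^{-1}$ --- is not correct: the tower of partial totalizations does not become constant as a diagram of simplicial sets; finite cohomological dimension only yields a vanishing line in the descent spectral sequence, hence stabilization of homotopy groups, and the interchange of the filtered colimit $p^{-1}$ with the inverse limit defining $|{?}|$ requires a genuine argument. The paper avoids this entirely by quoting \cite{rg}, Theorem 4.14, for the statement that $\mathscr{F}\to R\mathscr{F}$ is a local equivalence, and that is what you should do as well (your extra-codegeneracy contraction on stalks is fine as far as it goes, but it does not by itself bridge the interchange).
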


\begin{proof}
First note the following:
\beg{etceclaim}{\parbox{3.5in}{Cosimplicial Godement resolutions have the property
that restrictions under monomorphisms in $\mathscr{C}$ are Kan fibrations.}
}
Thus, it suffices to show that for a local equivalence $e:\mathscr{F}\r\mathscr{G}$ and a
cosimplicial Godement resolution $\mathscr{X}$, we have a bijection
\beg{egodbij}{[e,\mathscr{X}]:[\mathscr{G},\mathscr{X}]\r [\mathscr{F},\mathscr{X}]
}
where $[?,?]$ denotes the set of strong homotopy classes.

This is done in the standard way: To prove that \rref{egodbij} is onto, we first replace $\mathscr{G}$
by the mapping cylinder of $e$. Then Lemma \ref{lanodextsh} applies (with $\mathscr{K}=\mathscr{G}\times I$).
Therefore, the mapping cylinder embeds to an anodyne extension, for which the mapping extension 
problem into $\mathscr{X}$ can be solved by \rref{etceclaim}.

To prove that \rref{egodbij} is injective, first replace $e$ by its mapping cylinder (which is
isomorphic to $\mathscr{G}$ in the strong homotopy category) to make $e$ injective. Then build a 
pushout $\mathscr{P}$ of two copies of $e$. Then
the embedding 
\beg{emapg}{\mathscr{P}\subseteq \mathscr{G}\times \mathscr{I}} 
is a local equivalence (where $\mathscr{I}$ is the simplicial set obtained from attaching two copies of $I$ at a point)
and hence it is contained in an anodyne extension. Therefore, we may conclude that for a cosimplicial
Godement resolution $\mathscr{X}$, a morphism $\mathscr{P}\r\mathscr{X}$ extends to $\mathscr{G}\times \mathscr{I}$,
which shows that \rref{egodbij} is injective.

The fact that under our assumptions, the canonical morphism of a sheaf to the cosimplicial realization of its
cosimplicial Godement resolution is a local equivalence follows from \cite{rg}, Theorem 4.14.
\end{proof}

\vspace{3mm}
The case of sheaves of based simplicial sets and combinatorial spectra is now treated analogously, with $I\times ?$
replaced by $I_+\wedge ?$, resp. $I_+\star ?$. Let us discuss the case of combinatorial spectra in more detail.
A sheaf of combinatorial spectra is a functor from a site $\mathscr{C}$ into the category of combinatorial spectra which 
satisfies the sheaf limit condition in the category of combinatorial spectra. Since combinatorial spectra are a 
coreflexive subcategory of $\Delta^{Op}_{st}\dash Set_\bullet$ (which is a category of universal algebras), 
sheafification of a presheaf of combinatorial spectra can be constructed by sheafifying the corresponding 
presheaf valued in
$\Delta^{Op}_{st}\dash Set_\bullet$.
Since the condition of being a combinatorial spectrum cannot be called a universal algebra condition, we do not know 
if the category of sheaves of combinatorial spectra is independent of the choice of sites defining the same topos. 
Nevertheless, we shall assume that our site $\mathscr{C}$ satisfies Assumptions (A1), (A2) and (A3).
By a {\em local equivalence} of 
sheaves of combinatorial spectra, we mean a morphism which is
a weak equivalence on stalks. 



\vspace{3mm}
An injective morphism of sheaves of combinatorial spectra $f:\mathscr{F}\r\mathscr{G}$ is called a
{\em relative cell sheaf} if we have sheaves $\mathscr{G}_\gamma$ for ordinals $\gamma<\alpha$,
$\mathscr{G}_0=\mathscr{F}$, $\mathscr{G}_\alpha=\mathscr{G}$, for a limit ordinal $\beta$,
$\mathscr{G}_\beta$ is the colimit of $\mathscr{G}_\gamma$, $\gamma<\beta$, and for any ordinal
$\beta<\alpha$, we have an injective morphism $\iota_\beta:y_\beta\r x_\beta$ in $\mathscr{C}$ 
and a pushout of sheaves of the form
\beg{sedefcellsh}{\diagram
\Sigma^{\infty-\ell_\beta}\Delta_{n_\beta+}^{\iota_\beta}\rto\dto & \mathscr{G}_\beta\dto\\
\Sigma^{\infty-\ell_\beta}(\Delta_{n_\beta}\times \underline{x}_\beta)_+\rto& \mathscr{G}_{\beta+1}.
\enddiagram
}
Here $\Sigma^{\infty-\ell}$ of a sheaf of based simplicial set is constructed by taking shift suspension
spectra section-wise and then sheafifying. This is again left adjoint to taking $\Omega^{\infty-\ell}$ 
section-wise.

An {\em anodyne extension} of sheaves is defined the same way, except $\Delta_{n_\beta}^{\iota_\beta}$
is replaced by $E_{n_\beta, k_\beta}^{\iota_\beta}$. 

Analogously to Lemma \ref{lconstrsh}, we have

\begin{lemma}\label{slconstrsh}
Every injective morphism of sheaves of combinatorial spectra over a site $\mathscr{C}$ which satisfies Assumptions
(A1), (A2) and (A3) is a relative cell sheaf.
\end{lemma}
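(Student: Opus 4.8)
The plan is to mimic the transfinite construction in the proof of Lemma~\ref{lconstrsh}, replacing the attachment of the simplicial cells $\Delta_n^\iota$ by the attachment of the shift-desuspended spectral cells $\Sigma^{\infty-\ell}\Delta_{n+}^{\iota}$ that appear in the pushout \rref{sedefcellsh}. First I would set $\mathscr{G}_0=\mathscr{F}$ and, at limit ordinals $\beta$, take $\mathscr{G}_\beta$ to be the colimit of the $\mathscr{G}_\gamma$, $\gamma<\beta$. Since colimits of sheaves of combinatorial spectra are computed by sheafifying the underlying colimit in $\Delta^{Op}_{st}\dash Set_\bullet$, these limit stages again carry an injective morphism $\mathscr{G}_\beta\r\mathscr{G}$, and it suffices to show how to build $\mathscr{G}_{\beta+1}$ from $\mathscr{G}_\beta$ whenever the injection $\mathscr{G}_\beta\r\mathscr{G}$ fails to be onto.

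At such a successor step I would invoke Assumption (A1) to produce a point $p$ together with an element of $p^{-1}\mathscr{G}$ lying outside $p^{-1}\mathscr{G}_\beta$. Since taking stalks commutes with evaluation in each simplicial degree, $p^{-1}\mathscr{G}$ is an honest combinatorial spectrum, so Lemma~\ref{nondeg} lets me normalize. Starting from any element outside $p^{-1}\mathscr{G}_\beta$ and repeatedly replacing it by the non-degenerate core of a face that still lies outside $\mathscr{G}_\beta$, I descend in the degree of Proposition~\ref{pcell}; because the simplicial identity $d_id_j=d_jd_{i+1}$ forces every face of a degree-$k$ non-degenerate element to have degree $<k$, this descent terminates and yields a non-degenerate $t\in(p^{-1}\mathscr{G})(n)$ all of whose faces already lie in $\mathscr{G}_\beta$. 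By the remark following Proposition~\ref{pcell} I may normalize the pair $(n,\ell)$, so that $t$ is the datum of a single cell $\Sigma^{\infty-\ell}\Delta_{n+}$ whose boundary $\Sigma^{\infty-\ell}\Delta_{n+}^\circ$ maps into $\mathscr{G}_\beta$.

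Next I would apply Assumption (A3) to the sheaf of sets of $n$-simplices underlying $\mathscr{G}$, obtaining an object $u$ containing $p$ and an injective morphism $\underline{u}\r\mathscr{G}$ through which $t$ lifts to a section over $u$. This places me exactly in the setting of Lemma~\ref{lsglue}, applied with $\mathscr{F}$ there replaced by the relevant stage of $\mathscr{G}_\beta$: the set-level overlap of this representable cell with $\mathscr{G}_\beta$ is governed by a monomorphism $\iota_\beta\colon y_\beta\r x_\beta=u$, and Lemma~\ref{lsglue} supplies a gluing for which the resulting morphism into $\mathscr{G}$ remains injective. Transcribing this gluing through the adjunction $\Sigma^{\infty-\ell}\dashv\Omega^{\infty-\ell}$ converts it into a pushout of precisely the shape \rref{sedefcellsh}, attaching $\Sigma^{\infty-\ell_\beta}\Delta_{n_\beta+}^{\iota_\beta}\r\Sigma^{\infty-\ell_\beta}(\Delta_{n_\beta}\times\underline{x}_\beta)_+$, and this defines $\mathscr{G}_{\beta+1}$. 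The process terminates because $\mathscr{C}$ is small, so only set-many cells are attached.

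The main obstacle I anticipate is the bookkeeping in the middle two steps, namely coupling the spectral structure to the sheaf gluing. One must verify carefully that the degree truly drops under passage to faces and non-degenerate cores, so that the attaching map out of $\Sigma^{\infty-\ell}\Delta_{n+}^\circ$ is genuinely defined into $\mathscr{G}_\beta$; and one must check that Lemma~\ref{lsglue}, stated for sheaves of sets, can be applied to the underlying sheaf of $n$-simplices and then reassembled, via the $\Sigma^{\infty-\ell}\dashv\Omega^{\infty-\ell}$ adjunction, into the single spectral pushout \rref{sedefcellsh} without disturbing the defining condition of a combinatorial spectrum. Everything else is a routine transcription of the proof of Lemma~\ref{lconstrsh}.
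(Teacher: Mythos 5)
Your proposal is correct and is essentially the paper's own argument: the paper proves Lemma~\ref{slconstrsh} by declaring it ``a verbatim repeat of the proof of Lemma~\ref{lconstrsh}, with simplicial sets replaced by combinatorial spectra,'' which is exactly the transfinite induction you carry out, using (A1) to find a stalkwise element outside $\mathscr{G}_\beta$, (A3) to lift it to a representable section, and Lemma~\ref{lsglue} to attach the cell injectively. Your additional bookkeeping (degree descent via Lemma~\ref{nondeg} and Proposition~\ref{pcell} to normalize to a non-degenerate element whose faces lie in $\mathscr{G}_\beta$, and the $\Sigma^{\infty-\ell}\dashv\Omega^{\infty-\ell}$ adjunction to recast the set-level gluing as the spectral pushout \rref{sedefcellsh}) just makes explicit what the paper leaves implicit in that transcription.
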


\begin{proof}
A verbatim repeat of the proof of Lemma \ref{lconstrsh}, with simplicial sets replaced by combinatorial
spectra.
\end{proof}

\vspace{3mm}
Analogously to Lemma \ref{lanodsh}, we have

\begin{lemma}\label{slanodsh}
An anodyne extension of sheaves of combinatorial spectra over a site $\mathscr{C}$ which satisfies Assumptions
(A1) and (A2) is a local equivalence.
\end{lemma}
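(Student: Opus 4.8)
The plan is to mirror the proof of Lemma \ref{lanodsh}, replacing simplicial sheaves by sheaves of combinatorial spectra and replacing the fact that anodyne extensions of simplicial sets are weak equivalences by its combinatorial-spectrum analogue noted in Section \ref{scomb}. Since a local equivalence is by definition a morphism which is a weak equivalence on every stalk, it suffices to fix a point $p$ of $\mathscr{C}$ and show that applying the stalk functor $p^{-1}$ to the given anodyne extension $f:\mathscr{F}\r\mathscr{G}$ yields an anodyne extension of combinatorial spectra in the sense of \rref{ecell1}; weakness of $p^{-1}f$ then follows from that earlier fact, and holding for all $p$ it gives the local equivalence.

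First I would record the relevant exactness of $p^{-1}$. As the inverse image part of a point, $p^{-1}$ is a left adjoint, hence preserves all colimits, and is left exact, hence preserves monomorphisms and finite limits. Moreover, because the stalk of a presheaf agrees with the stalk of its sheafification and $\Sigma^{\infty-\ell}$ commutes with the filtered colimits computing stalks, there is a canonical isomorphism $p^{-1}\Sigma^{\infty-\ell}\cong\Sigma^{\infty-\ell}p^{-1}$. Consequently $p^{-1}$ carries the transfinite filtration $\mathscr{G}_\gamma$ defining the anodyne extension to a transfinite filtration of combinatorial spectra, and carries each defining pushout \rref{sedefcellsh} (with $\Delta_{n_\beta+}^{\iota_\beta}$ replaced by $E_{n_\beta,k_\beta+}^{\iota_\beta}$) to a pushout of combinatorial spectra.

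It then remains to identify the stalk of a single building block. Writing $X=p^{-1}\underline{x}_\beta$ and $Y=p^{-1}\underline{y}_\beta$, the monomorphism $\iota_\beta$ induces an injection $Y\hookrightarrow X$ of sets, and since $p^{-1}$ preserves the pushout defining $E_{n_\beta,k_\beta}^{\iota_\beta}$ one computes
\beg{estalkcell}{p^{-1}E_{n_\beta,k_\beta}^{\iota_\beta}\;\cong\;(\Delta_{n_\beta}\times Y)\;\sqcup\;\bigl(V_{n_\beta,k_\beta}\times(X\smin Y)\bigr),}
with the map to $p^{-1}(\Delta_{n_\beta}\times\underline{x}_\beta)=\Delta_{n_\beta}\times X$ being the identity on the $Y$-indexed summand and the horn inclusion $V_{n_\beta,k_\beta}\hookrightarrow\Delta_{n_\beta}$ on each $x\in X\smin Y$. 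Applying $(?)_+$ and $\Sigma^{\infty-\ell_\beta}$, the stalk of the sheaf-level attaching map becomes, up to an isomorphism on the $Y$-indexed part, a wedge indexed by $X\smin Y$ of the standard maps $\Sigma^{\infty-\ell_\beta}V_{n_\beta,k_\beta+}\r\Sigma^{\infty-\ell_\beta}\Delta_{n_\beta+}$. Pushing out along the isomorphic $Y$-part changes nothing, while pushing out along the remaining wedge is exactly an anodyne cell attachment of combinatorial spectra as in \rref{ecell1}. Assembling these stages along the colimit, which $p^{-1}$ preserves, exhibits $p^{-1}f$ as an anodyne extension of combinatorial spectra, which completes the argument.

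The step I expect to be the main obstacle is the identification \rref{estalkcell}: one must verify that $p^{-1}$ genuinely splits the sheaf-theoretic cell into a trivial part indexed by $Y$, where the attaching map is already an isomorphism, and a genuine horn-filling part indexed by $X\smin Y$. This hinges on the left exactness of $p^{-1}$, which turns $\iota_\beta$ into an honest injection of stalk sets, together with its commutation with the pushout defining $E_{n_\beta,k_\beta}^{\iota_\beta}$ and with $\Sigma^{\infty-\ell}$. Assumptions (A1) and (A2), through the existence and exactness of enough points and the smallness guaranteeing that the transfinite construction terminates, are precisely what make this reduction legitimate.
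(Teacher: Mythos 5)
Your proof is correct and is essentially the paper's own argument made explicit: the paper's one-line proof (``use the definition, performing the construction on the presheaf level first, and then sheafifying'') amounts precisely to your observation that stalks, being filtered colimits of sections and unaffected by sheafification, are preserved by the exact functor $p^{-1}$ and convert each pushout along $\Sigma^{\infty-\ell_\beta}E_{n_\beta,k_\beta+}^{\iota_\beta}\r\Sigma^{\infty-\ell_\beta}(\Delta_{n_\beta}\times\underline{x}_\beta)_+$ into a wedge of horn attachments indexed by $X\smin Y$, the $Y$-indexed part being an isomorphism. One pedantic point worth a half-line in a write-up: the sheaf-level filtration is indexed by an arbitrary ordinal while the paper's definition of anodyne extensions of combinatorial spectra in \rref{ecell1} is $\N_0$-indexed, so either reindex the stalk-level cells by depth (possible since each cell $\Sigma^{\infty+\ell}V_{n,k+}$ is small, so its attaching map meets only finitely many earlier cells) or, more simply, note that injective weak equivalences of combinatorial spectra are closed under transfinite composition because $\mathscr{L}$ and homotopy groups commute with filtered colimits of inclusions, which already suffices for the local-equivalence conclusion.
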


\begin{proof}
Again, the proof is the same as for Lemma \ref{lanodsh}.
\end{proof}

Analogously to Lemma \ref{lanodextsh}, we have

\begin{lemma}\label{slanodextsh}
For an injective weak equivalence of sheaves of combinatorial spectra $f:\mathscr{F}\r\mathscr{G}$
over a site $\mathscr{C}$ which satisfies Assumptions
(A1), (A2) and (A3), there exists a morphism of sheaves
$g:\mathscr{G}\r\mathscr{H}$ such that $gf$ is an anodyne extension.
\end{lemma}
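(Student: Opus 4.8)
The plan is to imitate the proof of Lemma \ref{lanodextsh} essentially verbatim, replacing based simplicial sheaves by sheaves of combinatorial spectra, the cells $\Delta_n^\iota$ by their shift-desuspensions $\Sigma^{\infty-\ell_\beta}\Delta_{n_\beta+}^{\iota_\beta}$, and the cylinder $I\times(?)$ by $I_+\star(?)$. First I would use Lemma \ref{slconstrsh} to express the injective local equivalence $f$ as a relative cell sheaf, with stages $\mathscr{G}_\gamma$, $\mathscr{G}_0=\mathscr{F}$, $\mathscr{G}_\alpha=\mathscr{G}$. Then I would construct by transfinite induction morphisms $g_\beta:\mathscr{G}_\beta\r\mathscr{H}_\beta$ together with inclusions $\mathscr{G}_\beta\subseteq\mathscr{G}$ such that $g_\beta f$ is an anodyne extension, taking directed colimits at limit ordinals.

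For the successor step, suppose $\mathscr{G}_\beta\subseteq\mathscr{G}$ is not onto. By Assumption (A1) there is a point $p$ and an element $t\in p^{-1}(\mathscr{G})$ not in $\mathscr{G}_\beta$ all of whose faces lie in $\mathscr{G}_\beta$; by Assumption (A3) there is an object $u$ containing $p$ and an injective $\alpha:\underline{u}\r\mathscr{G}$ through which $t$ lifts to $\widetilde{t}\in\mathscr{G}(u)$ with faces in $\mathscr{G}_\beta$. Since $f$ is a local equivalence, weak equivalence being tested by the associated May spectrum functor $\mathscr{L}$ on stalks, after shrinking $u$ to some $x_\beta\subseteq u$ I may assume $\widetilde{t}$ lifts, up to homotopy, into $\mathscr{H}_\beta(x_\beta)$. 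Taking the subobject $y_\beta\r x_\beta$ supplied by Lemma \ref{lsglue}, I arrange---by the small object argument, attaching in $\omega$ steps all non-isomorphic shift-desuspended anodyne cells $\Sigma^{\infty-\ell}E_{n,k}^{\iota_\beta}$ and sheafifying at each stage---that the restriction $\mathscr{H}_\beta(x_\beta)\r\mathscr{H}_\beta(y_\beta)$ is a Kan fibration of combinatorial spectra.

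The crux is the extension of $g_\beta$ across the new cell. Because the attached cell is a shift-desuspension $\Sigma^{\infty-\ell_\beta}\Delta_{n_\beta+}^{\iota_\beta}$, the adjunction $\Sigma^{\infty-\ell}\dashv\Omega^{\infty-\ell}$ converts the extension problem into a homotopy-lifting problem of based simplicial sets against the based simplicial set obtained by applying $\Omega^{\infty-\ell_\beta}$ to the restriction $\mathscr{H}_\beta(x_\beta)\r\mathscr{H}_\beta(y_\beta)$. This map of based simplicial sets is a Kan fibration, since a Kan fibration of combinatorial spectra is levelwise a Kan fibration; and the local-equivalence hypothesis furnishes a solution after passing to the associated May spectra (geometric realization), hence an actual solution, exactly as in Lemma \ref{lanodextsh}. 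The resulting extension $\Sigma^{\infty-\ell_\beta}(\Delta_{n_\beta}\times\underline{x}_\beta)_+\r\mathscr{H}_\beta$ lets me enlarge the pushout defining $\mathscr{G}_{\beta+1}$ to an anodyne pushout over the shift-desuspended spectrum analogue of $Q_{n_\beta}^{\iota_\beta}$, built with $I_+\star$ in place of $I\times$; this defines $\mathscr{H}_{\beta+1}$ and keeps $g_{\beta+1}f$ anodyne. The induction terminates by the smallness of $\mathscr{C}$.

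I expect the main obstacle to be precisely this homotopy-lifting step in the spectrum setting: one must check that the cylinder $I_+\star(?)$ behaves like a genuine interval on cells, and that the shift-desuspended lifting problem transports faithfully to the simplicial level. Both are controlled by results already in hand---Lemma \ref{ldotps} (that $?\star Z$ preserves weak equivalences and injections in the first variable) and Lemma \ref{tdotps1} (associativity $(T_1\wedge T_2)\star Z\cong T_1\star(T_2\star Z)$), together with the levelwise characterization of Kan fibrations of combinatorial spectra---so the remaining work is bookkeeping: verifying that $\Sigma^{\infty-\ell}$ and $\star$, defined section-wise and then sheafified, commute with the stalk functors well enough (by Assumption (A1)) for the point-by-point arguments to globalize.
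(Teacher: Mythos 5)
Your proposal is correct and follows essentially the same route as the paper's own proof: the transfinite cell-by-cell induction via Lemma \ref{slconstrsh}, the small object argument with shift-desuspended anodyne cells $\Sigma^{\infty-\ell}E_{n,k}^{\iota_\beta}$ to make $\mathscr{H}_\beta(x_\beta)\r\mathscr{H}_\beta(y_\beta)$ a (levelwise) Kan fibration, the adjunction-plus-geometric-realization argument to solve the lifting problem (including the caution that one does not know $\mathscr{F}(x)\r\mathscr{H}_\beta(x)$ is an equivalence, only that a lift exists after realization, which suffices in a Kan fibration), and the enlargement of the attaching pushout to the anodyne one over the shift-desuspended $Q_{n_\beta}^{\iota_\beta}$. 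Your phrasing of the cylinder as $I_+\star(?)$ versus the paper's $\Sigma^{\infty-\ell_\beta}Q_{n_\beta+}^{\iota_\beta}$ built at the simplicial level is only an expository difference, since the two agree on shift desuspensions by the explicit construction of $\star$.
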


\begin{proof}
This time, the proof of Lemma \ref{lanodextsh} requires some modification, to account for shift desuspensions. 
To clarify, we write it out. The beginning is the same:
By Lemma \ref{slconstrsh}, $f$ is a relative cell sheaf. Using the notation in the definition, we will 
construct, by induction, morphisms 
\beg{segbeta}{g_\beta:\mathscr{G}_\beta\r\mathscr{H}_\beta}
such that $g_\beta f$ is an anodyne extension. For $\beta$ a limit ordinal, we can just take the directed
direct limit, so let us assume \rref{segbeta} has been constructed for a given $\beta$. 
Again, we may assume using the fact that $f$ is a local equivalence that the embedding 
$\iota_\beta:y_\beta\subseteq x_\beta$ is chosen is such a way that
the boundary of the new cell can be extended to the geometric realization of
$\mathscr{H}_\beta$ up to homotopy.
We now claim that
we can assume without loss of generality that the restriction
\beg{segbeta1}{\mathscr{H}_\beta(x_\beta)\r\mathscr{H}_\beta(y_\beta)
}
is a Kan fibration of combinatorial spectra (i.e. a level-wise Kan fibration). 
To this end, attach, in $\omega$ steps, each time all non-isomorphic pushouts of the form
$$\diagram
\Sigma^{\infty-\ell}E_{n,k}^{\iota_\beta+}\dto_\subset \rto & ?\\
\Sigma^{\infty-\ell}(\Delta_n\times \underline{x}_\beta)_+ &
\enddiagram
$$
Despite the fact that this is not exactly the same process as the canonical factorization into an anodyne extension
and Kan fibration of combinatorial spectra (since sheafification is performed at each time), the small object  nevertheless
applies, so taking the colimit over the $\omega$ steps replaces $\mathscr{G}_\beta$ with a sheaf where
\rref{egbeta1} is a Kan fibration of combinatorial spectra. 

Now consider the pushout defining $\mathscr{G}_{\beta+1}$ in \rref{sedefcellsh}. By our assumption, 
the composition 
$$\Sigma^{\infty-\ell_\beta}\Delta^{\iota_\beta}_{n_\beta+}\r\mathscr{G}_\beta\r\mathscr{G}$$
extends to
$$\Sigma^{\infty-\ell_\beta}(\Delta_{n_\beta}\times\underline{x}_\beta)_+,$$
which by our assumption on $\iota_\beta$ means
$$\Sigma^{\infty-\ell_\beta}\Delta^{\iota_\beta}_{n_\beta+}\r\mathscr{G}_\beta\r\mathscr{H}_\beta$$
extends to 
\beg{sedeltaext}{\Sigma^{\infty-\ell_\beta}(\Delta_{n_\beta}\times \underline{x}_\beta)_+\r \mathscr{H}_\beta}
by adjunction and the assumption that \rref{segbeta1} is a Kan fibration of
combinatorial spectra. (Caution: We do not know that
$\mathscr{F}(x)\r\mathscr{H}_\beta(x)$ is a weak equivalence! However, it does not matter, since
by assumption, the homotopy lifting problem can be solved in $|\mathscr{F}(x)|$, and in a Kan fibration
of combinatorial spectra, just as of simplicial sets,
a homotopy lifting problem which has a solution upon geometric realization has a solution.)

But since we have \rref{sedeltaext}, we may extend the pushout \rref{sedefcellsh} to a pushout of the
form
\beg{sehext}{\diagram
\Sigma^{\infty-\ell_\beta}Q_{n_\beta+}^{\iota_\beta}\rto\dto &\mathscr{H}_\beta\dto\\
\Delta_{n_\beta}\times I\times \underline{x}_\beta \rto &\mathscr{H}_{\beta+1}
\enddiagram}
where $Q_{n}^\iota$ is as in the proof of Lemma \ref{lanodextsh}.
\end{proof}

\vspace{3mm}
A {\em strong homotopy} of sheaves of combinatorial spectra on $\mathscr{C}$ now is a morphism of the form
$$h:I_+\star\mathscr{F}\r \mathscr{G}.$$
Again, $I_+\star?$ is performed section-wise, and we sheafify the result. 
$h$ is also called a strong homotopy between the restriction of $h$ to $\{0\}_+\star\mathscr{F}$ 
and $\{1\}_+\star\mathscr{F}$. We may now consider the smallest equivalence relation on morphisms
of simplicial sheaves which contains strong homotopy. This is obviously a congruence, and the 
quotient category is called the {\em strong homotopy category of sheaves of combinatorial spectra}. 

\begin{lemma}\label{slshgh}
A strong homotopy equivalence of sheaves of combinatorial spectra is an equivalence on sections,
and hence a local equivalence.
\end{lemma}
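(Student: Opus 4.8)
The plan is to follow the proof of Lemma \ref{lshgh} for simplicial sheaves, replacing the operation $I\times?$ by $I_+\star?$ throughout. The one point that needs care is that $I_+\star\mathscr{F}$ is defined as the sheafification of the section-wise construction $u\mapsto I_+\star\mathscr{F}(u)$, so a strong homotopy $h:I_+\star\mathscr{F}\r\mathscr{G}$ does not literally have section-wise components. However, there is still a canonical unit morphism of presheaves from the section-wise $I_+\star\mathscr{F}$ into its sheafification. Precomposing $h$ with this unit and evaluating at any $u\in Obj(\mathscr{C})$ yields a morphism of combinatorial spectra
$$I_+\star\mathscr{F}(u)\r\mathscr{G}(u),$$
which is precisely the datum of a homotopy of combinatorial spectra, in the sense defined before Lemma \ref{shomotopy}, between the two restrictions of $h$ to $\{0\}_+\star\mathscr{F}(u)$ and $\{1\}_+\star\mathscr{F}(u)$.

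Granting this, I would first prove the ``equivalence on sections'' assertion. A strong homotopy equivalence $\mathscr{F}\r\mathscr{G}$ comes equipped with a strong homotopy inverse $\mathscr{G}\r\mathscr{F}$ and strong homotopies witnessing that the two composites are homotopic to the identities. Applying the previous paragraph to each of these strong homotopies, and using functoriality of the sections functor, one reads off for every $u$ an honest homotopy inverse and homotopies on sections, so that $\mathscr{F}(u)\r\mathscr{G}(u)$ is a homotopy equivalence of combinatorial spectra. By Lemma \ref{shomotopy}, a homotopy equivalence of combinatorial spectra is a weak equivalence; hence $\mathscr{F}(u)\r\mathscr{G}(u)$ is a weak equivalence for every object $u$.

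To conclude ``and hence a local equivalence'', I would pass from sections to stalks. Under Assumption (A1) the stalk $p^{-1}\mathscr{F}$ at a point $p$ is the filtered colimit of the sections $\mathscr{F}(u)$ over the objects $u$ containing $p$. Since $\mathscr{L}$ commutes with colimits (geometric realization and spectrification are both left adjoints) and the stable homotopy groups commute with filtered colimits, a morphism that is a weak equivalence on every section induces a weak equivalence on each stalk. This is exactly what it means for the map to be a local equivalence.

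The only genuinely non-formal step is the first one, so that is where I would concentrate the verification: namely, confirming that the section-wise-then-sheafify definition of $I_+\star?$ does not prevent one from recovering a section-wise homotopy. The resolution is that sheafification is used only to make $I_+\star\mathscr{F}$ into a sheaf, and the canonical presheaf map into the sheafification is all that is needed to transport $h$ back to the section level; no inverse to sheafification is required. Everything else is a routine transcription of the simplicial-sheaf argument, with Lemma \ref{shomotopy} in place of the corresponding statement for simplicial sets.
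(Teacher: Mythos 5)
Your proposal is correct and follows essentially the same route as the paper, whose proof of Lemma \ref{slshgh} is simply a reference to the argument of Lemma \ref{lshgh}: precompose the strong homotopy with the unit of sheafification to obtain honest homotopies on sections, conclude a homotopy equivalence (hence weak equivalence, via Lemma \ref{shomotopy}) on every section, and pass to stalks as filtered colimits. The two points you single out for verification --- that no inverse to sheafification is needed, and that weak equivalences of combinatorial spectra persist under the filtered colimits computing stalks --- are exactly what the paper's terser ``by definition'' and ``hence'' implicitly rely on.
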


\begin{proof}
Analogous to the proof of Lemma \ref{lshgh}.
\end{proof}

\vspace{3mm}
Finally, the analogue of Theorem \ref{tcegodement} is

\vspace{3mm}
\begin{theorem}\label{stcegodement}
Under the Assumptions (A1), (A2) and (A3), and assuming also that $\mathscr{C}$ has finite cohomological
dimension, the category $Sh_\mathscr{S}(\mathscr{C})$
of sheaves of combinatorial spectra on $\mathscr{C}$ is right Cartan-Eilenberg with respect to
strong homotopy equivalence, local equivalence and cosimplicial Godement resolutions.
\end{theorem}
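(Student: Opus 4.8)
The plan is to run the proof of Theorem \ref{tcegodement} essentially verbatim, replacing its simplicial-sheaf ingredients by the combinatorial-spectrum analogues established above (Lemmas \ref{slconstrsh}, \ref{slanodsh}, \ref{slanodextsh} and \ref{slshgh}) and replacing $I\times ?$ throughout by $I_+\star ?$. The strong homotopy category of sheaves of combinatorial spectra already exists as the congruence quotient defined just before Lemma \ref{slshgh}, and $\mathscr{S}\subseteq\mathscr{E}$ (strong homotopy equivalences are local equivalences) is exactly Lemma \ref{slshgh}; so the content of the theorem is the locality of Godement resolutions together with the existence of resolutions. The first preparatory step is to record the analogue of \rref{etceclaim}: a cosimplicial Godement resolution $\mathscr{X}$ of a sheaf of combinatorial spectra has the property that restrictions along monomorphisms of $\mathscr{C}$ are Kan fibrations of combinatorial spectra, i.e. level-wise Kan fibrations. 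By Proposition \ref{p2} a sheaf of combinatorial spectra is level-wise a sheaf of simplicial sets equipped with structure isomorphisms, and, as noted at the end of Section \ref{scr}, $\Omega$ and directed colimits both preserve Kan fibrations; since the Godement construction is computed stalk-wise, the level-wise statement reduces to the simplicial case already used in Theorem \ref{tcegodement}.

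Granting this, it suffices to show that for a local equivalence $e:\mathscr{F}\r\mathscr{G}$ and a cosimplicial Godement resolution $\mathscr{X}$, the map
$$[e,\mathscr{X}]\colon [\mathscr{G},\mathscr{X}]\r[\mathscr{F},\mathscr{X}]$$
on strong homotopy classes is a bijection. For surjectivity I replace $\mathscr{G}$ by the mapping cylinder $Me=I_+\star\mathscr{F}\cup_{\{1\}_+\star\mathscr{F}}\mathscr{G}$, formed section-wise and sheafified, which makes $e$ injective while the projection $Me\r\mathscr{G}$ is a strong homotopy equivalence; here $I_+\star ?$ commutes with the relevant pushout by the explicit description below Lemma \ref{ldotps} together with Lemma \ref{tdotps1}, exactly as in the proof of Theorem \ref{tcw}. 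Once $e$ is an injective local equivalence, Lemma \ref{slanodextsh} embeds it into an anodyne extension, and the mapping extension problem into $\mathscr{X}$ is solvable by the level-wise Kan fibration property just recorded. For injectivity I again make $e$ injective via the mapping cylinder, form the pushout $Pe=\mathscr{G}\cup_{\{1\}_+\star\mathscr{F}}\mathscr{I}_+\star\mathscr{F}\cup_{\{1'\}_+\star\mathscr{F}}\mathscr{G}$, where $\mathscr{I}$ is two copies of $I$ glued at $0$, and observe that the inclusion $Pe\subseteq\mathscr{I}_+\star\mathscr{G}$ is a local equivalence since $e$ is. Two maps agreeing up to strong homotopy after precomposition with $e$ amount to a single map out of $Pe$; Lemma \ref{slanodextsh} embeds $Pe\subseteq\mathscr{I}_+\star\mathscr{G}$ into an anodyne extension, so the map extends and the required strong homotopy results.

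For the existence half of the Cartan-Eilenberg structure, I must exhibit for every sheaf of combinatorial spectra a local equivalence into a cosimplicial Godement resolution. As in Theorem \ref{tcegodement}, the fact that the canonical morphism of a sheaf into the cosimplicial realization of its Godement resolution is a local equivalence follows from \cite{rg}, Theorem 4.14, applied level-wise through Proposition \ref{p2}; the hypothesis of finite cohomological dimension is what guarantees convergence of the cosimplicial realization. Together with $\mathscr{S}\subseteq\mathscr{E}$ from Lemma \ref{slshgh}, this verifies each clause of the definition of a right Cartan-Eilenberg category with $\mathscr{B}$ the class of cosimplicial Godement resolutions.

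The main obstacle I expect is the spectrum-level analogue of \rref{etceclaim} and of Lemma \ref{ldesc1}: one must be sure that cosimplicial realization of Godement resolutions of sheaves of combinatorial spectra both yields level-wise Kan fibrations on restrictions and preserves local equivalences on level-wise Kan fibrant objects. The reduction to the simplicial case via Proposition \ref{p2}, together with the commutation of $\Omega$ and directed colimits with Kan fibrations, is what makes this tractable; but care is needed because the realization of \rref{ecosreal1} is an inverse limit that only ignores codegeneracies, so the convergence and fibrancy bookkeeping must be checked to survive passage from simplicial sheaves to sheaves of combinatorial spectra.
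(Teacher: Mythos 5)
Your proposal follows the paper's architecture closely: the spectrum-level analogue \rref{setceclaim} of \rref{etceclaim}, surjectivity of $[e,\mathscr{X}]$ via the mapping cylinder together with Lemma \ref{slanodextsh} and the Kan-fibration restriction property, and the existence clause via \cite{rg}, Theorem 4.14. But there is one genuine gap, and it sits at exactly the point the paper flags as the reason the proof of Theorem \ref{tcegodement} ``requires modification due to peculiarities of the smash product.'' In your injectivity step you assert that the inclusion $\mathscr{P}\subseteq\mathscr{I}_+\star\mathscr{G}$ is a local equivalence ``since $e$ is,'' and you then feed this into Lemma \ref{slanodextsh}, whose hypothesis is an injective \emph{local equivalence}. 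That assertion requires knowing that $\mathscr{I}_+\star ?$ (performed section-wise and then sheafified) carries local equivalences of sheaves of combinatorial spectra to local equivalences --- i.e., that $K\star ?$ preserves weak equivalences in the \emph{spectrum} variable and interacts correctly with stalks. Nothing in the paper supplies this: Lemma \ref{ldotps} gives preservation of weak equivalences only in the simplicial-set coordinate ``$?$'' for a fixed combinatorial spectrum $Z$, and the paper's own proof of Theorem \ref{stcegodement} explicitly states that ``we do not know'' that the embedding \rref{semapg} is a local equivalence. (In the unsheafified Theorem \ref{tcw} the analogous claim is made for plain combinatorial spectra, but it does not transfer for free once section-wise constructions and sheafification intervene.) So the step as you wrote it is unjustified, and the appeal to Lemma \ref{slanodextsh} does not go through as stated.

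The paper routes around this precise difficulty: it does not claim the embedding is a local equivalence, only that it is contained in an anodyne extension of $\mathscr{P}$, and that weaker statement suffices --- a morphism $\mathscr{P}\r\mathscr{X}$ then extends over the anodyne extension by \rref{setceclaim} and restricts back to $\mathscr{I}_+\star\mathscr{G}$, producing the required strong homotopy. Your argument would be repaired by the same move: either establish only the containment in an anodyne extension directly, or else actually prove the preservation statement you implicitly used (for instance by showing $K\star ?$ commutes with filtered colimits and satisfies $\mathscr{L}(K\star Z)\simeq |K|\wedge\mathscr{L}(Z)$, neither of which is recorded in the paper). Everything else in your write-up, including the reduction of \rref{setceclaim} to the simplicial case through Proposition \ref{p2} and the remarks on cosimplicial realization, is consistent with the paper's proof.
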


\begin{proof}
Again, we follow the proof of Theorem \ref{tcegodement}, but it
requires modification due to peculiarities of the smash product, 
so we write it out. Again, the beginning is the same.
First note the following:
\beg{setceclaim}{\parbox{3.5in}{Cosimplicial Godement resolutions have the property
that restrictions under monomorphisms in $\mathscr{C}$ are Kan fibrations of combinatorial spectra.}
}
Thus, it suffices to show that for a local equivalence of
combinatorial spectra $e:\mathscr{F}\r\mathscr{G}$ and a
cosimplicial Godement resolution $\mathscr{X}$, we have a bijection
\beg{segodbij}{[e,\mathscr{X}]:[\mathscr{G},\mathscr{X}]\r [\mathscr{F},\mathscr{X}]
}
where $[?,?]$ denotes the set of strong homotopy classes.

This is done in the standard way: To prove that \rref{segodbij} is onto, we first replace $\mathscr{G}$
by the mapping cylinder of $e$. The Lemma \ref{lanodextsh} applies (with $\mathscr{K}=I_+\star\mathscr{G}$).
Therefore, the mapping cylinder embeds to an anodyne extension, for which the mapping extension 
problem into $\mathscr{X}$ can be solved by \rref{setceclaim}.

To prove that \rref{segodbij} is injective, first replace $e$ by its mapping cylinder (which is
isomorphic to $\mathscr{G}$ in the strong homotopy category) to make $e$ injective. Then build a 
pushout $\mathscr{P}$ of two copies of $e$. We note that we do not know that
the embedding 
\beg{semapg}{\mathscr{I}_+\star\mathscr{P}\subseteq \mathscr{G}} 
is a local equivalence (where $\mathscr{I}$ is again the simplicial set obtained from attaching two copies of $I$ at a point). 
Thus, we know \rref{semapg} is contained in an anodyne extension of 
$\mathscr{P}$. Therefore, we may still conclude that for a cosimplicial
Godement resolution $\mathscr{X}$, a morphism $\mathscr{P}\r\mathscr{X}$ extends to $\mathscr{I}_+\star\mathscr{G}$,
which shows that \rref{segodbij} is injective.

The fact that under our assumptions, the canonical morphism of a sheaf to the cosimplicial realization of its
cosimplicial Godement resolution is a local equivalence again follows from \cite{rg}, Theorem 4.14.
\end{proof}

\vspace{5mm}
\section{Comparison with Thomason}\label{thomason}

Thomason \cite{thomason} considers a concept of {\em fibrant simplicial $\Omega$-spectra} (which he attributes to 
Bousfield and Friedlander \cite{bf}) which are
sequences
$$Z_n,n\in \N_0$$
(alternatively, $n\in \Z$) of based simplicial sets, together with structure morphisms
\beg{ethomstr}{S^1\wedge Z_n\r Z_{n+1},}
such that the simplicial sets $Z_n$ are Kan fibrant, and the adjoints of the structure maps \rref{ethomstr} are
weak equivalences:
$$\diagram
Z_n\rto^(.3)\sim & F(S^1,Z_{n+1}).
\enddiagram$$
Morphisms $(Z_n)\r (T_n)$, as usual, are sequences of morphisms of based simplicial sets which commute with
the structure morphisms \rref{ethomstr}. 

Thomason \cite{thomason} 
observes that his category of fibrant simplicial spectra has products and directed colimits, which 
means that on sites with enough points, 
local equivalences may be defined as morphisms
which induce weak equivalences on stalks. Global equivalence are defined as morphisms
which induce equivalences on sections. Additionally, on sites which 
have finite cohomological dimension,
where one can use 
cosimplicial Godement 
resolutions, one can treat generalized sheaf cohomology completely on the level of presheaves.
The reason is that stalks, again, can be calculated on the level of presheaves and  on the other hand, the products
of skyscraper sheaves which occur in cosimplicial Godement resolutions are sheaves in any subcanonical topology.
It was proved in \cite{rg} that under these assumptions, the category of Thomason presheaves of
fibrant simplicial spectra is right Cartan-Eilenberg with respect to equivalences on sections, local equivalences
and Godement resolutions.

An actual general theory of sheaves of Thomason's fibrant simplicial spectra, on the other hand, 
does not seem meaningful, as more types of colimits are necessary, for example, to have sheafification.

\vspace{3mm}
By Proposition \ref{p1}, we have a canonical natural transformation
\beg{enatsmash}{S^1\wedge X=\Sigma(S^0)\wedge X\r \Sigma(S^0\wedge X)=\Sigma X.
}
This means that Kan fibrant combinatorial spectra are canonically a full subcategory of Thomason fibrant simplicial
spectra, and local equivalences coincide with equivalences on sections. 
By Lemma \ref{lsingsp} (2), every combinatorial spectrum can
be functorially replaced by a Kan fibrant one, so doing this section-wise gives a functor from sheaves of
combinatorial spectra to presheaves of Kan simplicial spectra which preserves local equivalences,
as well as equivalences on sections. 

There is also a functor the other way. In fact, the construction applies to any sequence of based simplicial
sets with connecting morphisms
\rref{ethomstr} without any additional assumptions. We may then apply functorial fibrant replacement. 
The construction is performed in two steps. First, replace $Z_n$ with a sequence of based simplicial
sets $Z^\prime_n$ and connecting maps \rref{ethomstr} with $Z_n$ replaced by $Z^\prime_n$ which are
injective. We define inductively 
$$Z^\prime_0=Z_0,$$
and assuming we already have a morphism
$$Z^\prime_n\r Z_n,$$
we let $Z^\prime_{n+1}$ be the mapping cylinder of the composition
$$S^1\wedge Z^\prime_n\r S^1\wedge Z_n\r Z_{n+1}.$$
Thus, assume without loss of generality $Z^\prime=Z$. In the second step, we make this into a
combinatorial prespectrum as follows: Let
$$Z^{\prime\prime}_0=Z_0.$$
Assuming we have already a morphism of based simplicial sets
$$Z_n\r Z^{\prime\prime}_n,$$
we let $Z^{\prime\prime}_{n+1}$ be the pushout of the canonical diagram
$$\diagram
S^{1}\wedge Z_n\rto \dto &Z_{n+1}\\
\Sigma Z^{\prime\prime}_n. &
\enddiagram
$$
By functoriality, this construction automatically passes to presheaves. We may then spectrify, sheafify and fibrant
replace as we wish. All those functors are left adjoint and commute by commutation of adjoints. 

The advantage of using combinatorial spectra, where we have a fully functional 
theory of sheaves, is that we now also have
functors (and their right derived functors, provided they preserve strong homotopy) which cannot be constructed 
on presheaves alone. Functors of the form $f_*$, $f_!$ for a general morphism of sites $f$ are an example.
(For the functor $f_*$, this was also observed in \cite{rg}, Corollary 4.20, since $f_*$ preserves 
section-wise equivalences.)

\vspace{10mm}

\end{document}